\numberwithin{equation}{section}
\newtheorem{theorem}{Theorem}[section]
\newtheorem{proposition}[theorem]{Proposition}
\newtheorem{lemma}[theorem]{Lemma}
\theoremstyle{remark}
\newtheorem{remark}[theorem]{Remark}
\def\vec{\mathbf}
\def\<{\langle}
\def\>{\rangle}
\def\L{\mathcal{L}}
\def\R{\mathbb{R}}
\def\C{\mathbb{C}}
\def\Re{{\rm Re}}
\def\Im{{\rm Im}}
\def\zb{\bar z}
\def\d{{\rm d}}
\newcommand{\abs}[1]{|#1|}
\begin{document}
\title{\bf{Classification of Willmore 2-spheres in the 5-dimensional sphere}}

\author{Xiang Ma%
  \thanks{LMAM, School of Mathematical Sciences, Peking University,
 Beijing 100871, China. \texttt{maxiang@math.pku.edu.cn}}
\and Changping Wang \thanks{College of Mathematics and Computer Sciences, Fujian Normal University, Fuzhou 350108, China. \texttt{cpwang@fjnu.edu.cn}}
\and Peng Wang%
  \thanks{Department of Mathematics, Tongji University, Shanghai 200092, China. \texttt{netwangpeng@tongji.edu.cn}, Corresponding author.}}

\date{\today}

\maketitle

\begin{center}
{\bf Abstract}
\end{center}

The classification of Willmore 2-spheres in the $n$-dimensional sphere $S^n$ is a long-standing problem, solved only when $n=3,4$ by Bryant, Ejiri, Musso and Montiel independently. In this paper we give a classification when $n=5$. There are three types of such surfaces up to M\"obius transformations: (1) super-conformal surfaces in $S^4$; (2) minimal surfaces in $R^5$; (3) adjoint transforms of super-conformal minimal surfaces in $R^5$. In particular, Willmore surfaces in the third class are not S-Willmore (i.e., without a dual Willmore surface).

To show the existence of Willmore 2-spheres in $S^5$ of type (3), we describe all adjoint transforms of a super-conformal minimal surfaces in $R^n$ and provide some explicit criterions on the immersion property. As an application, we obtain new immersed Willmore 2-spheres in $S^5$ and $S^6$, which are not S-Willmore.

\hspace{2mm}

{\bf Keywords:}
Willmore surfaces,
adjoint transform,
minimal surfaces,
super-conformal surfaces,
pedal surfaces,
harmonic maps\\

{\bf MSC(2000):\hspace{2mm} 53A10, 53C42, 53C45}

\section{Introduction}

Willmore surfaces immersed in a real space form $M^n(c)$ of constant sectional curvature $c$
are critical surfaces with respect to the Willmore functional
\[\int (|H|^2-K+c)\d A,\]
where $H$ is the mean curvature vector, $K$ is the Gauss curvature,
and $\d A$ is the area element with respect to the induced metric.

It is well-known that minimal surfaces in $M^n(c)$ are special examples of Willmore surfaces. Moreover, the Willmore functional is a conformal invariant, which implies that Willmore surfaces form a conformally invariant surface class. Thus Willmore surfaces are natural generalizations of minimal surfaces in M\"obius geometry. In particular, Willmore surfaces can always be regarded as located in $S^n$ and the classification is generally up to the action of the M\"obius group on $S^n$.

Under the global assumption that the surface is a topological 2-sphere $S^2$, one can utilize the vanishing theorem of holomorphic forms on $S^2$ to deduce many beautiful results, which has been demonstrated in the classical work by Hopf and Calabi. Comparing to the theory on minimal 2-spheres in $S^n$ and in other symmetric spaces (see the seminal work of Calabi \cite{calabi}, Chern \cite{Chern}, Uhlenbeck \cite{Uh}, Burstall and Guest \cite{burstall}, Burstall and Rawnsley \cite{burstall3}), it is more difficult to classify Willmore 2-spheres in $S^n$. Concerning the cases $n\le 4$, a beautiful description has been derived as below.

\begin{theorem}~\cite{bryant1, Ejiri, Montiel, Musso}~~ A Willmore 2-sphere in $\mathbb{S}^4$ belongs to one of the following two surface classes (up to a M\"obius transformation):
\begin{enumerate}
\item  Complete minimal surfaces in $\mathbb{R}^4$ with embedded flat ends.
\item The twistor projection image of rational curves in $\mathbb{C}P^3$.
\end{enumerate}
Moreover, the Willmore functional of them are all integer multiples of $4\pi$ (\cite{bryant1, Montiel}). In particular, in $S^3$ we have only examples in the first class.
 \end{theorem}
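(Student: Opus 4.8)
Since this statement is classical, the plan is to reconstruct the arguments behind the cited works, organized around the conformal Gauss map together with the twistor fibration of $\S^4$. I would begin from the standard characterization: a conformal immersion $f\colon M\to\S^4$ is Willmore exactly when its conformal Gauss map $Y$ (the mean curvature sphere congruence) is a harmonic map from $M$ into the space of oriented round $2$-spheres in $\S^4$, which is a pseudo-Riemannian symmetric space. The decisive use of the hypothesis $M\cong S^2$ is that $S^2$ carries no nonzero holomorphic differential. Writing the harmonic-map structure equations in a local conformal coordinate $z$ and extracting the Hopf-type holomorphic differentials attached to $Y$ (including the quartic form familiar from Bryant's $\S^3$ theory), their forced vanishing on $S^2$ yields the two structural facts that drive the classification: first, that every Willmore $2$-sphere in $\S^4$ is S-Willmore, i.e. admits a dual (second-envelope) Willmore surface $\hat f$; and second, a further differential whose vanishing splits the S-Willmore surfaces into two families.

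Granting S-Willmore, I would analyze the dual surface $\hat f$. In the first family the vanishing differential forces $\hat f$ to degenerate to a constant point $p\in\S^4$, so that all the mean curvature spheres pass through $p$. Applying a M\"obius transformation sending $p$ to infinity, I would show the image acquires vanishing mean curvature, i.e. becomes a complete minimal surface in $\R^4$ whose finitely many punctures correspond to the preimages of $p$. A removable-singularity and asymptotic analysis at each puncture, controlling the conformal factor and the Gauss map there, shows the ends are embedded and planar; this produces class (1).

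In the complementary family the surviving differential forces $f$ to be super-conformal (isotropic), its curvature ellipse being a circle. Identifying $\S^4$ with the quaternionic projective line $\H P^1$ and using the twistor fibration $\pi\colon\C P^3\to\S^4$ together with Calabi's construction of isotropic surfaces, I would lift $f$ to a holomorphic curve $F\colon S^2\to\C P^3$ with $f=\pi\circ F$; since $S^2=\C P^1$ and $F$ is holomorphic into projective space, $F$ is a rational curve, giving class (2). (The two classes overlap, as complex curves in $\C^2\subset\R^4$ and minimal $2$-spheres in $\S^4$ illustrate.) For the energy statement I would reduce the Willmore integral to a topological count in each case: in class (1) a residue/Gauss--Bonnet computation contributes $4\pi$ per planar end, while in class (2) the integral equals the area of the twistor lift, namely $4\pi$ times the degree of the rational curve, so the value lies in $4\pi\mathbb{Z}$. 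Finally, in $\S^3$ the normal bundle is one-dimensional, so a Willmore surface is automatically S-Willmore with a point dual and can never be super-conformal; hence only class (1) survives there.

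The step I expect to be the main obstacle is establishing the super-conformal (twistor) case cleanly: passing from the algebraic vanishing of the holomorphic differential to the precise geometric conclusion that $f$ is the twistor projection of a genuinely \emph{holomorphic} curve, and controlling this lift globally over the compact $S^2$. A secondary but delicate point is the asymptotic analysis of the ends in class (1), where ruling out non-embedded or non-planar ends is exactly what pins down the stated normal form.
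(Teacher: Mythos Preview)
The paper does not supply its own proof of this theorem: it is quoted as a classical result with citations to Bryant, Ejiri, Montiel, and Musso, and serves only as background for the new classification in $S^5$. There is therefore no paper proof to compare your proposal against.

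That said, your reconstruction is a faithful outline of the argument as it appears in the cited literature, and in fact the paper's own strategy for the $S^5$ case (Sections~4--5) is visibly modeled on the same template: harmonicity of the conformal Gauss map, vanishing of holomorphic forms on $S^2$ (here the form $\Theta_0$ of \eqref{eq-Theta2} forces S-Willmore in codimension two), the dichotomy between a degenerate dual (Remark~\ref{rem-minimal}, giving the Euclidean minimal case) and the super-conformal case, and then the twistor description in $S^4$. Your identification of the two delicate points---the global holomorphic twistor lift and the planar-end asymptotics---is accurate; both are handled in the cited sources rather than in this paper.
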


For the case $n>4$, there has been no much progress on this problem after 2000 \cite{Montiel} except that the first author introduced in 2005 the so-called \emph{adjoint transforms} of \emph{immersed} Willmore surfaces \cite{ma0}. The thesis \cite{ma0} suggested a procedure to reduce Willmore 2-spheres to Euclidean minimal surfaces by repeatedly applying some canonically chosen adjoint transforms. The main difficulty is that  generally  such adjoint transforms will produce branch points. If such branch points appear, one need to deal with possible poles in the construction of holomorphic forms. Even worse, it seems doubtful whether one can continue this sequence of adjoint transforms around such singularities.

In this paper, we overcome this difficulty for the case $n=5$ and derive a classification of all Willmore $S^2$ in $S^5$. The main theorem is stated as below.

\begin{theorem}\label{main}
A Willmore 2-sphere $y:S^2\to S^5$ is M\"obius equivalent to one surface in either of the following three classes:
\begin{enumerate}
\item Super-conformal surfaces in $S^4$ (coming from the twistor curves in $\mathbb{C}P^3$);
\item Minimal surfaces in $\mathbb{R}^5$ with genus $0$ and embedded flat ends;
\item Adjoint transforms of complete isotropic minimal surfaces of genus $0$ in $\mathbb{R}^5$.
\end{enumerate}
Moreover, different from the first two cases, surfaces of the third class are not S-Willmore.
\end{theorem}

To have a better understanding about this result, recall that
an central theme in the study of Willmore surfaces is to determine the infimum of the Willmore functional among various immersions of the 2-sphere $S^2$ and the torus $T^2$.
When the genus $g$ is arbitrary, Simon \cite{Simon} and Kuwert  etc. \cite{Kuwert,Ku-Li-Sc} has shown that the infimum is always attained by some Willmore surfaces.  In \cite{Kusner1989} such surfaces are conjectured to be Lawson's minimal surfaces \cite{Lawson}. Along this way, the Willmore conjecture was finally proved by Marques and Neves in 2012 (see \cite{Marques}).

More generally, for any fixed integer $g$, it is desirable to give a general construction and classification of closed oriented Willmore surfaces of genus $g$, and to determine the values of their Willmore functional (the possible critical values).
Basic examples include complete Euclidean minimal surfaces with embedded flat ends (compactified at the infinity of $\mathbb{R}^n$, see \cite{bryant1,Bryant1988, kusner,PX}) and closed minimal surfaces in $\mathbb{S}^n$ \cite{Lawson}, \cite{Kusner1989}. But there are much more examples other than minimal ones in space forms, like the Willmore Hopf tori \cite{Pinkall}.

The genus zero case is the simplest case.
From a retrospective viewpoint, the successful classification of Willmore 2-spheres in $\mathbb{S}^3$ by Bryant \cite{bryant1} is based on the following crucial observations:
\begin{enumerate}
\item (Conformal Gauss map.) There is a so-called \emph{conformal Gauss map} into the de-Sitter space $\mathbb{S}^4_1$, which induces a M\"obius invariant metric for a surface in $\mathbb{S}^3$. Geometrically speaking, this map corresponds to the \emph{mean curvature sphere}, which is tangent to the original surface at one point with the same mean curvature (vector).
\item (Harmonic map.) An immersed surface $M$ in $\mathbb{S}^3$ is Willmore if, and only if, this conformal Gauss map is a harmonic map. This reveals the connection between Willmore surfaces and integrable systems.
\item (Duality Theorem.) Any Willmore surface $f:M\to S^3$ allows a dual Willmore surface $\hat{f}:M\to S^3$ arising as the second envelope of the mean curvature sphere congruence. When it does not degenerate, $\hat{f}$ envelops the same mean curvature sphere congruence as $f$.
\item (Vanishing theorem.) One can construct holomorphic forms on the underlying Riemann surface; on $S^2$ such forms always vanish identically.
\end{enumerate}

When the codimension is arbitrary, Ejiri generalized the construction of the conformal Gauss map for surfaces in $\mathbb{S}^n$. The same as before, this map is harmonic if and only if the original surface is Willmore. However, the duality theorem fails in general, since the second envelope of the mean curvature spheres might not exist \cite{Ejiri, BFLPP, ma2}. Fortunately, in codimension-two case the global assumption of being a 2-sphere guarantees the duality property. So in $\mathbb{S}^4$ one can still use the same method to obtain a classification as mentioned above.

Ejiri restricted to consider the subclass of Willmore surfaces in higher codimensional space for which the duality theorem still holds. This class is called \emph{S-Willmore surfaces} (see Section~3.1 for the definition and examples). He established a classification of S-Willmore 2-spheres in $\mathbb{S}^n$ as below. \\

\noindent
{\bf Theorem}~\cite{Ejiri}~~An immersed S-Willmore $S^2$ in $\mathbb{S}^n$ is M\"obius equivalent to one of the following surfaces
\begin{enumerate}
\item a minimal surface in $\mathbb{R}^n$ with embedded flat ends,
\item  a super-Willmore surface fully immersed in $\mathbb{S}^{2m}$. (It corresponds to a holomorphic, totally isotropic curve in an odd-dimensional complex projective space $\mathbb{C}P^{2m+1}$).
\end{enumerate}
In the end of his 1988 paper \cite{Ejiri}, Ejiri asked whether a Willmore two-sphere in $S^n$ must always be S-Willmore. If this is true, the classification of Willmore 2-spheres is finished. This problem remained open for a long time until a negative answer appeared in 2013 in \cite{DoWa1}.

Based on the celebrated DPW method for harmonic maps into non-compact symmetric space, Dorfmeister and the third author \cite{DoWa1} constructed a totally isotropic Willmore two-sphere in $S^6$ explicitly, which is full and not S-Willmore (see Section~5.1 in this paper for details). Moreover, in a follow-up work \cite{Wang-1}, \cite{Wang-3}, Wang provided a coarse classification for all Willmore two-spheres in $S^6$. According to their prediction, there also exist Willmore two spheres in $S^5$ which are not S-Willmore. But the computation along this route is very complicated, which makes it hard to derive an explicit expression or to discuss their global properties.\\

On the other hand, the \emph{adjoint transform} mentioned above is defined for \emph{any} immersed Willmore surface in $S^n$ \cite{ma2}. Such transforms produce new Willmore surfaces which always exist locally (however, not unique in general). This is a natural generalization of the dual Willmore surface as well as the 2-step B\"acklund transforms of Willmore surfaces in $S^4$ \cite{BFLPP}.

When the global assumption of being 2-sphere is imposed on a Willmore surface which is not totally isotropic (see Section~3 for a precise definition), it was noticed \cite{ma0} that one can construct a sequence of adjoint Willmore surfaces in a canonical way, with increasing isotropy order. Then the sequence has to terminate with a Euclidean minimal surface, whose next canonical adjoint transform degenerates to a single point. This picture mimics the famous harmonic sequence construction as well as the Willmore sequence produced by using B\"acklund transforms \cite{Leschke}. By analyzing the behavior of their Willmore sequence, in \cite{Leschke} Leschke and Pedit gave a new proof of the classification theorem of Willmore tori in $S^4$ with nontrivial normal bundle (which is quite similar to the case of Willmore two-spheres in $S^4$).

To overcome the aforementioned difficulty caused by the possible branch points, we construct some new conformal minimal branched immersion $\xi$ from $S^2$ to the accompanied de Sitter space $S^6_1$. Then one can use the following result \cite{Ejiri-i}.

\begin{proposition}\label{prop-xi-harmonic}
Any conformal harmonic map $\xi:S^2\to S^n_1\subset \mathbb{R}^{n+1}_1$ is totally isotropic, i.e., $\<\xi_z^{(k)},\xi_z^{(l)}\>\equiv 0,~\forall~k,l\in \mathbb{Z}^+.$
It is located in a subspace orthogonal to a non-zero light-like vector.
\end{proposition}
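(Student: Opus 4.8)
The plan is to prove the two assertions in turn: first total isotropy by a vanishing argument on $S^2$, then the existence of the light-like vector from a signature obstruction.

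Fix a local complex coordinate $z$ and write $\xi_z^{(k)}=\partial_z^k\xi$. Since $\xi$ takes values in $S^n_1=\{\<x,x\>=1\}$, the position vector $\xi$ is the unit normal of the de Sitter sphere, so harmonicity is equivalent to the tension field being normal, i.e. $\xi_{z\zb}=\mu\,\xi$ with $\mu=-\<\xi_z,\xi_{\zb}\>$, while conformality gives the base relation $\<\xi_z,\xi_z\>=0$. I would establish $\<\xi_z^{(k)},\xi_z^{(l)}\>\equiv0$ by strong induction on the total order $N=k+l$. The engine is the identity $\partial_{\zb}\xi_z^{(k)}=\partial_z^{k-1}(\mu\,\xi)=\sum_{j=0}^{k-1}\binom{k-1}{j}(\partial_z^j\mu)\,\xi_z^{(k-1-j)}$, which writes $\partial_{\zb}\xi_z^{(k)}$ as a combination of $\xi,\xi_z,\dots,\xi_z^{(k-1)}$. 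Pairing with $\xi_z^{(l)}$, the quantity $\partial_{\zb}\<\xi_z^{(k)},\xi_z^{(l)}\>$ becomes a linear combination of inner products $\<\xi_z^{(a)},\xi_z^{(b)}\>$ and $\<\xi,\xi_z^{(b)}\>$ of strictly smaller total order.

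Two ingredients then close the induction. First, the auxiliary terms are controlled by $\<\xi,\xi_z^{(m)}\>=\partial_z\<\xi,\xi_z^{(m-1)}\>-\<\xi_z,\xi_z^{(m-1)}\>$ together with $\<\xi,\xi_z\>=0$, so they vanish once all lower-order $\<\xi_z^{(a)},\xi_z^{(b)}\>$ do. Second, granting the inductive hypothesis that $\<\xi_z^{(a)},\xi_z^{(b)}\>\equiv0$ for $a+b<N$, the computation above yields $\partial_{\zb}\<\xi_z^{(k)},\xi_z^{(l)}\>=0$; moreover, since $\xi_z^{(k)}=(w_z)^k\xi_w^{(k)}+(\text{lower-order }\xi_w^{(j)})$ under a holomorphic change $w=w(z)$, the non-tensorial corrections to $\<\xi_z^{(k)},\xi_z^{(l)}\>$ are precisely lower-order inner products, which now vanish. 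Hence $\<\xi_z^{(k)},\xi_z^{(l)}\>\,(\d z)^N$ is a genuine holomorphic $N$-differential on $S^2$. As $S^2$ admits no nonzero holomorphic $N$-differential for $N\ge1$, it vanishes, completing the induction and proving total isotropy.

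For the light-like vector I would pass to the pointwise linear algebra. At a generic point the complex span $V=\mathrm{span}_{\C}\{\xi_z^{(k)}\}$ is totally isotropic and orthogonal to the spacelike vector $\xi$, and the real osculating space is $W=\R\xi\oplus W_0$ with $W_0\otimes\C=V+\bar V$. Because $V$ and $\bar V$ are isotropic, the form induced on $W_0$ is governed by the Hermitian form $u\mapsto\<u,\bar u\>$ on $V$, so a nondegenerate $W_0$ has signature $(2p,2q)$ with \emph{even} negative index. Since $\xi$ is spacelike and the ambient index of $\R^{n+1}_1$ equals $1$, a nondegenerate $W$ would have index $2q\in\{0,1\}$, forcing $q=0$. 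Thus there is a dichotomy: either $W$ is positive-definite (and $\xi$ lies in a totally geodesic spacelike subsphere, the exceptional low-dimensional case), or $W$ is degenerate, in which case a nonzero vector of $\mathrm{rad}(W_0)=W\cap W^{\perp}$ is light-like and orthogonal to $\xi$ and all its derivatives.

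The main obstacle is twofold and global in nature. One must exclude the positive-definite branch (using that the constructed $\xi$ is a full, genuinely Lorentzian immersion, so its osculating space does capture the timelike direction), and, more seriously, one must upgrade the pointwise null radical to a single constant vector $v\in\R^{n+1}_1$ with $\<\xi,v\>\equiv0$. The plan for the latter is to show the null line $\mathrm{rad}(W_0)$ is a parallel subbundle of the trivial bundle: differentiating the relations $\<v,\xi\>=0=\<v,\xi_z^{(k)}\>$ and feeding in $\xi_{z\zb}=\mu\,\xi$ should confine $\d v$ to the line $\R v$, so that after rescaling $v$ is constant; equivalently, the associated map $S^2\to\mathbb P(\text{light cone})$ is (anti)holomorphic and hence constant by the genus-zero hypothesis. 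Reconciling this pointwise radical with a globally parallel null direction is where the real work lies.
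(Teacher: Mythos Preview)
The paper does not supply its own proof of this proposition; it is quoted from Ejiri's 1988 paper \emph{Isotropic harmonic maps of Riemann surfaces into the de Sitter space time} and used as a black box in Propositions~4.2 and~5.4. So there is no in-paper argument to compare against, and one must judge your sketch on its own terms.

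Your argument for total isotropy is the standard Calabi-type induction and is correct as outlined: harmonicity gives $\xi_{z\zb}=\mu\xi$, so $\partial_{\zb}\xi_z^{(k)}$ lies in the span of $\xi,\xi_z,\dots,\xi_z^{(k-1)}$; under the inductive hypothesis $\<\xi_z^{(k)},\xi_z^{(l)}\>(\d z)^{k+l}$ is then a global holomorphic differential on $S^2$ and vanishes.

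For the light-like vector, you are working harder than necessary on constancy. A harmonic map is real-analytic, so at any non-branch point the osculating space $\mathrm{span}_{\R}\{\partial^\alpha\xi\}$ already equals the linear span $W$ of the entire image, which is a \emph{fixed} subspace of $\R^{n+1}_1$. Thus once your signature analysis shows $W$ is degenerate, the null radical is automatically a constant line; there is no need to prove a pointwise null direction is parallel or to produce a holomorphic map to the projectivized light cone.

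Your identification of the positive-definite branch as an obstacle is accurate, and in fact it is a genuine gap in the proposition as literally stated. Take a full totally isotropic minimal $S^2$ in a round sphere (for instance the Veronese surface in $S^4$) and embed the ambient $\R^5$ as a spacelike hyperplane of $\R^6_1$. This $\xi:S^2\to S^5_1$ is conformal, harmonic, and totally isotropic, yet the orthogonal complement of the span of its image is a single timelike line, which contains no non-zero light-like vector. So the second assertion needs either the hypothesis that $\xi$ is not contained in a spacelike hyperplane, or the weaker conclusion ``orthogonal to a non-zero causal (timelike or light-like) vector''. In the paper's applications the map $\xi$ is built from the Willmore frame in $\R^{n+2}_1$ and genuinely sees the Lorentzian direction, so the degenerate case is the relevant one; but your instinct that this branch must be excluded by extra information about $\xi$, rather than by the bare hypotheses of the proposition, is correct.
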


The proof of the classification theorem is divided into two cases, treated separately in Section~4 and 5, depending on whether the Hopf differential is isotropic or not. Note that one still needs to deal with the technical difficulty of singularities and to make sure that $\xi:S^2\rightarrow S^{6,1}$ is globally defined. We address this problem in the Appendix. For Willmore surfaces in $S^6$ or higher codimension, it stays an open problem to us how to guarantee the global existence of adjoint transforms.

The conformal harmonic map $\xi$ to the de Sitter space as above is similar to the conformal Gauss map first introduced in \cite{bryant1}. The main difference here is that this harmonic map $\xi$ is derived from an adjoint transform operation. This map $\xi$ also relates the geometry of Willmore surfaces with the integrable system theory (\cite{Helein}, \cite{BFLPP}, \cite{bpp}, \cite{DoWa1}, \cite{Wang-1}, \cite{Wang-3}). A very interesting observation is that Willmore surfaces are related with several harmonic maps into different spaces; see \cite{Helein} and \cite{ma2} for examples. This might provide some insight for the future work.

 In this connection we would like to mention the quantization theorem of the Willmore functionals. For an inner non-compact symmetric space $G/K$, there always exists a
compact dual symmetric space $U/H$, (see for example Section 4.4 of \cite{DoWa1}). In \cite{DoWa1}, Dorfmeister and Wang showed that for every harmonic map from a Riemann surface $M$ into
$G/K$, there exists a dual harmonic map into $U/H$. A basic observation of Burstall \cite{burstall2} shows that the energy of these two harmonic maps differ by an exact form. So when $M$ is closed, these two harmonic maps share the same energy. Applying to a Willmore two-sphere, its Willmore energy is equal the energy of its conformal gauss map, which is mapped into $SO(1,n+1)/SO(1,3)\times SO(n-2)$. The dual compact symmetric space of  $SO(1,n+1)/SO(1,3)\times SO(n-2)$ is $SO(n+2)/SO(4)\times SO(n-2)$. So its Willmore energy is equal to the energy of some harmonic two-sphere into $SO(n+2)/SO(4)\times SO(n-2)$.
 By \cite{burstall3}, the energy of a harmonic two-sphere into $SO(n+2)/SO(4)\times SO(n-2)$ is $2\pi k$ for some $k\in \mathbb{Z}^+\cup\{0\}$.
As a corollary, the Willmore functional of a Willmore 2-sphere in $S^n$ is also  $2\pi k$ for some $k\in \mathbb{Z}^+\cup\{0\}$. Till now, this result can not be derived from the method of adjoint transforms. We also point out that in \cite{Montiel}, Montiel showed that the energy of a Willmore 2-sphere in $S^4$ is $4\pi k$, for some $k\in \mathbb{Z}^+\cup\{0\}$ and all $k$ can be achieved by some Willmore immersion in $S^4$. We conjecture that this result still holds in $S^n$ when $n>4$. It seems that one need new insights to prove this.

Another interesting question is whether there exist new examples in the third class which are \emph{immersed (without any branch points)}. To answer it, we find out \emph{explicitly all} possible adjoint transforms of \emph{any} superconformal minimal surface in $\mathbb{R}^n$ in Section~6. Then in Section~7 we show how to guarantee the vanishing of branched points (including a discussion of the end behavior). An interesting by-product is the relationship with the classical construction of \emph{pedal surfaces}, which also appeared recently in the work of Dajczer and Vlachos \cite{Da-V2} on superconformal surfaces. Finally in Section~8 we describe three examples of the third class (immersed Willmore two-spheres which are not S-Willmore) in details.

It is worth mentioning that adjoint transforms in $S^n$ contrast sharply with 2-step B\"acklund transforms in $S^4$ in the following aspect. It has been shown in \cite{Leschke} that the latter preserves the smoothness of conformal Gauss map (the mean curvature sphere congruence). However in Section~7 (Remark~\ref{rem-end-gaussmap}) we will see that adjoint transforms may destroy the smoothness of the conformal Gauss map when $n\ge 5$.\\

This paper is organized as below. In Section~2 we review the surface theory in M\"obius geometry. The definition of S-Willmore surfaces and the adjoint transforms are included in Section~3. The proof of the classification theorem is divided into two cases, treated separately in Section~4 and 5, depending on whether the Hopf differential is isotropic or not. In Section~6, we provide a concrete description of all adjoint transforms of a superconformal minimal surface in $\R^n$. Section 7 devotes to the discussions of the immersion property of the adjoint surfaces, which have a close relationship with the end behavior of the original superconformal minimal surface in $\R^n$. We end this paper by providing a new Willmore two-sphere in $S^5$ as an adjoint surface of a superconformal minimal surface in $\R^5$ in Section~8 and proving a technical lemma in the appendix.  \\

\textbf{Acknowledgement}~~
The authors are grateful to NSFC for the continual support (the grant 10771005, 11171004, 11201340 and 11331002) of this
long-term research program. We thank F. Pedit for his interest in this work and for communicating to us the result of Burstall. The third named author is thankful to F. Burstall, J. Dorfmeister and F. Pedit for communications and discussions on the energy of harmonic maps and Willmore two-spheres.

\section{Surface theory in M\"obius geometry}

In this section, we will briefly review the surface theory in M\"obius geometry. For details see \cite{bpp, ma2}.

We identify the unit sphere $S^n\subset \R^{n+1}$ with the projectivized light cone via
\[
S^n\cong \mathbb{P}(\L):~~y\leftrightarrow [(y,1)]=[Y],
\]
where $\L\subset \R^{n+2}_1$ is the lightcone in the $(n+2)$-dimensional Lorentz space with the quadratic form
$\<Y,Y\>=-Y_{n+2}^2+\sum_{i=1}^{n+1}Y_i^2$ and $[(y,1)]$ is the homogeneous coordinate. The projective action of the Lorentz group on $\mathbb{P}(\L)$ yields all conformal diffeomorphisms of $S^n$. The following correspondence is well-known:
\begin{itemize}
\item A point $y\in S^n$ $~~~~\leftrightarrow~~~$ a lightlike line $[Y]\in \mathbb{P}(\L)$;
\item A $k$-dim sphere $\sigma\subset S^n$ $~~\leftrightarrow~~~$ a space-like $(n-k)$-dim subspace $\Sigma\subset \R^{n+2}_1$;
\item The point $y$ locates on the sphere $\sigma$ $~~~\leftrightarrow~~~$ $Y\bot\Sigma$.
\end{itemize}
For a conformal immersion $y:M^2\to S^n$ of a Riemann surface $M^2$, a local lift is just a map $Y$ from $M^2$ into the light cone $\L$ such that the null line spanned by $Y(p)$ is $y(p)$, $p\in M$.
Taking derivatives with respect to a local complex coordinate $z$,
we see that $\<Y_z,Y_z\>=0$ and $\<Y_z,Y_{\zb}\> >0$, since $y$ is a conformal immersion.

Before describing a specific choice of the moving frame, we briefly review the notion of mean curvature sphere. There is a 4-dimensional Lorentz subspace of $\R^{n+2}_1$ defined at every point of $M^2$ given by
\[
V={\rm span} \{ Y,\Re (Y_z),\Im (Y_z), Y_{z\zb}, \}
\]
which is independent to the choice of local lift $Y$ and complex coordinate $z$. Under the correspondence given above, $V$ describes a M{\"o}bius invariant geometric object, called the \emph{mean curvature sphere} of $y$. This name comes from the property that it is the unique 2-sphere tangent to the surface and having the same mean curvature vector as the surface at the tangent point when the ambient space is endowed with a metric of some Euclidean space (or any other space form). The corresponding map from $M$ into the Grassmannian $Gr(4,\R^{n+1,1})$ (which consists of 4-dimensional Lorentz subspaces) is the so-called \emph{conformal Gauss map} \cite{bryant1, Ejiri}.

For a given local coordinate $z$, there is a canonical
lift determined by $\abs{\d Y}^2=\abs{\d z}^2.$ We will always assume that $Y$ is such a canonical lift unless stated elsewhere.
Then a canonical frame of $V\otimes\C$ is given as
\begin{equation}
\label{frame}
\{Y,Y_z,Y_{\zb},N\},
\end{equation}
where we choose the unique $N\in V$ with
$\< N,N\>=0,~\< N,Y\>=-1,~\< N,Y_z\> =0.$
These frame vectors are orthogonal to each other except that
$\< Y_{z},Y_{\zb} \> = \frac{1}{2},~\< Y,N \> = -1.$ Let $\xi\in\Gamma(V^{\bot})$ be an arbitrary section of
the normal bundle $V^{\bot}$; $D$ is the normal connection.
The structure equations are as below:
\begin{equation}
\label{mov-eq}
\left\{
\begin{array}{llll}
Y_{zz} &=& -\frac{s}{2} Y + \kappa, \\[.2cm]
Y_{z\zb} &=&
-\<\kappa,\bar{\kappa}\> Y + \frac{1}{2}N, \\[.2cm]
N_{z} &=& -2 \<\kappa,\bar{\kappa}\> Y_z
- s Y_{\zb}+ 2D_{\zb}\kappa, \\[.2cm]
\xi_z &=& D_z\xi + 2\<\xi,D_{\zb}\kappa\> Y
- 2\<\xi,\kappa\> Y_{\zb}.
\end{array}
\right.
\end{equation}
The first equation among them is a fundamental one, which defines
two basic M{\"o}bius invariants associated with
the surface $y:M\to S^n$. The \emph{Schwarzian} $s$ is a complex valued function interpreted as a generalization of the usual Schwarzian derivative of a complex function. The section $\kappa\in \Gamma(V^{\bot}\otimes\C)$ may be identified with the normal-valued Hopf differential up to scaling.
Later we will need the fact that $\kappa$ vanishes exactly at the umbilic points, and that $\kappa\frac{\d z^2}{\abs{\d z}}$
is a globally defined differential form. See \cite{bpp} for more details.

The conformal Gauss, Codazzi and Ricci equations
as integrability conditions are
\begin{eqnarray}
& \frac{1}{2}s_{\zb} = 3\< D_z\bar{\kappa},\kappa\>
+ \<\bar{\kappa},D_z\kappa\>, \label{gauss}\\[.1cm]
& \Im ( D_{\zb}D_{\zb}\kappa
+ \frac{\bar{s}}{2}\kappa ) = 0, \label{codazzi}\\[.1cm]
& R_{\zb z}^D\xi
:= D_{\zb}D_z\xi - D_z D_{\zb}\xi
= 2\<\xi,\kappa\>\bar{\kappa}
- 2\<\xi,\bar{\kappa}\>\kappa. \label{ricci}
\end{eqnarray}

\section{Willmore surfaces and adjoint transforms}
There is a well-defined metric over $M$ invariant under M\"obius transformations, which is also conformal to the original metric induced from $y:M\to S^n$, called the \emph{M\"obius metric}:
\[
e^{2\omega}\abs{\d z}^2=4\<\kappa,\bar\kappa\>\abs{\d z}^2.
\]
It is well known that this metric is induced from the conformal Gauss map. The area of $M$ with respect to the M\"obius metric
\[
W(y):=2{\rm i}\cdot\int_M \abs{\kappa}^2 \d z\wedge\d\zb
\]
is exactly the famous \emph{Willmore functional}.
It coincides with the usual definition
$\widetilde{W}:=\int_M (H^2-K)\d M$
for an immersed surface in $\R^3$ with mean curvature $H$ and Gauss curvature $K$. A critical surface with respect to $W$ is called a \emph{Willmore surface}. In terms of the above invariants, Willmore surfaces are characterized by the
\emph{Willmore equation} \cite{bpp}:
\begin{equation}
\label{willmore}
D_{\zb}D_{\zb}\kappa+\frac{\bar{s}}{2}\kappa = 0.
\end{equation}
Note that this is stronger than the Codazzi equation \eqref{codazzi}.

\begin{remark}\label{harmonic}
As mentioned in the introduction, another important characterization of the Willmore surfaces is that the conformal Gauss map is harmonic \cite{blaschke, bryant1, Ejiri}.
This link motivated the study of Willmore surfaces as an integrable system \cite{Helein, DoWa1}. Although we will not pursue this line here, we would like to emphasize that in the proof of Theorem \ref{main}, the key step is the construction of some new conformal harmonic map $\xi$ into the de-Sitter space $S^6_1$, which should be viewed as derived from this harmonic conformal Gauss map.
\end{remark}

\begin{remark}\label{analytic}
Since Willmore surfaces satisfy an elliptic equation, Morrey's result (see \cite{bryant1} and Lemma~1.4 of \cite{Ejiri}) guarantees that the related geometric quantities are real analytical.
\end{remark}

\subsection{S-Willmore surfaces}

In the codim-1 case,
Bryant \cite{bryant1} noticed that when $y:M^2\to S^3$ is Willmore, there is always a dual conformal Willmore surface $\hat{y}=[\widehat{Y}]:M^2\to S^3$ enveloping the same mean curvature spheres. Note that in this case, except the umbilic points,
$D_{\zb}\kappa$ depends linearly on
$\kappa$, thus locally there is a function $\mu$ such that
\begin{equation}
\label{swillmore}
D_{\zb}\kappa +\frac{\bar\mu}{2}\kappa =0.
\end{equation}

When the codimension is higher, Ejiri first noticed that \eqref{swillmore} does not hold for all Willmore surfaces.
He observed moreover that a surface satisfying \eqref{swillmore} for some $\mu$ is automatically a Willmore surface with a dual Willmore surface. Conversely, the duality theorem holds true if the first Willmore surface $y$ satisfies \eqref{swillmore} \cite{Ejiri, ma1}.
So Willmore surfaces satisfying \eqref{swillmore} for some $\mu$ (locally defined, depending on the coordinate $z$) provide exactly all Willmore surfaces having a dual surface. They are called \emph{S-Willmore surfaces}.
This special class includes Willmore surfaces in $S^3$, superconformal surfaces in $S^4$,
and minimal surfaces in space forms $\R^n, S^n, H^n$.

\begin{remark}
Note that the above definition of S-Willmore surfaces includes the codimension one Willmore surfaces. This is slightly different from Ejiri's original definition \cite{Ejiri}, for the purpose of including all Willmore surfaces with dual surfaces.
\end{remark}

The dual surface $\hat{y}=[\widehat{Y}]:M\to S^n$ may be written down explicitly as
\begin{equation}
\label{yhat}
\widehat{Y}=\frac{1}{2}\abs{\mu}^2 Y
+\bar\mu Y_z +\mu Y_{\zb} +N
\end{equation}
with respect to the frame $\{Y,Y_z,Y_{\zb},N\}$.
Calculation using \eqref{mov-eq} and
\eqref{swillmore}, \eqref{willmore} yields
\begin{equation}
\label{yhat-z}
\widehat{Y}_z=\frac{\mu}{2}\widehat{Y}
   + \rho\left(Y_z + \frac{\mu}{2}Y\right),~~
   \text{where}~\rho:=\bar\mu_z-2\<\kappa,\bar\kappa\>.
\end{equation}
Here $\rho\abs{\d z}^2$ is a globally defined invariant
associated with $\{y,\hat{y}\}$ \cite{ma2}.
It follows
\begin{equation}
\label{yhat-metric}
\<\widehat{Y}_z,\widehat{Y}_z\>=0, \quad
\<\widehat{Y}_z,\widehat{Y}_{\zb}\>
=\frac{1}{2}\abs{\rho}^2.
\end{equation}
It is straightforward to verify that $\hat{y}$ share
the same mean curvature sphere as $y$, at the points where $\hat{y}$ is immersed. By the characterization mentioned in Remark~\ref{harmonic}, $\hat{y}$ is also a conformal Willmore immersion into $S^n$ when $\rho\ne 0$.

\begin{remark}\label{rem-minimal}
When $\rho\equiv 0$, by \eqref{yhat-z}, we know $\widehat{Y}$ corresponds to a fixed point $[\widehat{Y}]$ in $S^n$. Up to a M\"obius transformation we may assume that $[\widehat{Y}]$ is $\infty$, the point at infinity of the Euclidean space $\R^n$. Since the mean curvature spheres of $y=[Y]$ always pass through this point, these spheres are all planes. According to the geometric meaning of the mean curvature spheres given before, the surface has mean curvature $H=0$ everywhere in $\R^n$. So $y$ is M\"obius equivalent to a Euclidean minimal surface.
\end{remark}

\subsection{The adjoint transform}

In higher dimensional sphere $S^n$, a Willmore surface does not necessarily have a dual surface. Therefore
one needs to generalize the notion of dual Willmore surfaces. In \cite{ma0,ma2} the first author introduced adjoint transforms for arbitrary Willmore surfaces .

Given a Willmore surface $y:M^2\to S^n$, its adjoint transform is another (branched) conformal immersion $\hat{y}:M^2\to S^n$ such that the corresponding point $\hat{y}(z)$ locates on the same mean curvature sphere of $y$ at $z$, at the same time $\hat{y}$ \emph{half-touches} this sphere \cite{ma0, ma2}.

Since the corresponding point $\hat{y}=[\widehat{Y}]$ still locates on the same mean curvature sphere as $y$, we have similar equations as \eqref{yhat} and \eqref{yhat-z}, depending on a local function $\mu$:
\begin{equation}
\label{yhat2}
\widehat{Y}=\frac{1}{2}\abs{\mu}^2 Y
+\bar\mu Y_z +\mu Y_{\zb} +N,
\end{equation}
\begin{equation}
\label{yhat-z2}
\widehat{Y}_z=\frac{\mu}{2}\widehat{Y}
   + \rho\left(Y_z + \frac{\mu}{2}Y\right)+2\eta,
\end{equation}
where
\begin{equation}\label{eq-eta}
\rho:=\bar\mu_z-2\<\kappa,\bar\kappa\>,~~
\eta:=D_{\zb}\kappa +\frac{\bar\mu}{2}\kappa.
\end{equation}
To satisfy the \emph{half-touching} condition and the conformal condition \cite{ma2} we require that
\begin{align}
\theta:&=\mu_z-\frac{1}{2}\mu^2-s=0, \label{eq-theta}\\
 \<\eta,\eta\>&=\frac{\bar\mu^2}{4}\<\kappa, \kappa\>
+\bar\mu \<\kappa,D_{\bar{z}}\kappa\>
 +\<D_{\bar{z}}\kappa,D_{\bar{z}}\kappa\>=0. \label{eq-theta-2}
\end{align}
The formulas below \cite{ma2} are useful later and easy to verify:
\begin{equation}\label{eq-eta-zb}
D_{\zb}\eta=\frac{\bar\mu}{2}\eta,~~~
\rho_{\zb}=\bar\mu\rho+4\<\eta,\bar\kappa\>.
\end{equation}
A basic fact about the adjoint transform is the following theorem.

\begin{theorem}
\label{thm-adjoint-dual}~\cite{ma2}~~The adjoint transform $[\widehat{Y}]:M^2\to S^n$ is a Willmore surface. Conversely, the original Willmore surface $[Y]$ is an adjoint transform of $[\widehat{Y}]$.
\end{theorem}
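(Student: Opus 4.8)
The plan is to take the explicit lift $\widehat Y$ in \eqref{yhat2} together with its derivative \eqref{yhat-z2} and to verify the Willmore equation \eqref{willmore} directly for the second-order M\"obius invariants of $\hat y=[\widehat Y]$. I want to stress at the outset that, in contrast with the S-Willmore situation of Section~3.1, the adjoint surface does \emph{not} envelope the mean curvature sphere congruence $V$ of $y$: by \eqref{yhat-z2} the tangent vector $\widehat Y_z$ carries the extra term $2\eta$ with $\eta\in\Gamma(V^{\bot}\otimes\C)$, and $\eta\not\equiv 0$ exactly when $y$ fails to be S-Willmore. Hence the conformal Gauss map of $\hat y$ is genuinely different from that of $y$, and one cannot simply transport the harmonicity of the conformal Gauss map of $y$ (Remark~\ref{harmonic}). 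A hands-on computation with the frame \eqref{frame} is therefore unavoidable.

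First I would check that $\hat y$ is a (branched) conformal immersion. A short computation with the inner products of \eqref{frame} gives $\<\widehat Y,\widehat Y\>=0$ and, using the conformal condition \eqref{eq-theta-2} to kill the term $4\<\eta,\eta\>$, also $\<\widehat Y_z,\widehat Y_z\>=0$; moreover $\<\widehat Y_z,\widehat Y_{\zb}\>=\tfrac12\abs{\rho}^2+4\<\eta,\bar\eta\>\ge 0$. Thus $\hat y$ is conformal, with branch points exactly where $\rho$ and $\eta$ vanish simultaneously. Away from these points I would pass to the canonical lift $\widehat Y^{c}=\lambda\widehat Y$, where $\lambda^2\bigl(\tfrac12\abs{\rho}^2+4\<\eta,\bar\eta\>\bigr)=\tfrac12$, so that $\abs{\d\widehat Y^{c}}^2=\abs{\d z}^2$, and build the adapted frame $\{\widehat Y^{c},\widehat Y^{c}_z,\widehat Y^{c}_{\zb},\widehat N\}$ attached to the new mean curvature sphere $\widehat V=\mathrm{span}\{\widehat Y,\Re(\widehat Y_z),\Im(\widehat Y_z),\widehat Y_{z\zb}\}$.

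The heart of the argument is then to read off the invariants $\hat s$ and $\hat\kappa$ of $\hat y$ from $\widehat Y^{c}_{zz}=-\tfrac{\hat s}{2}\widehat Y^{c}+\hat\kappa$ and to verify $\widehat D_{\zb}\widehat D_{\zb}\hat\kappa+\tfrac{\bar{\hat s}}{2}\hat\kappa=0$. The three inputs that make this work are: the half-touching condition $\theta=\mu_z-\tfrac12\mu^2-s=0$ of \eqref{eq-theta}, which controls $\hat s$; the relation $D_{\zb}\eta=\tfrac{\bar\mu}{2}\eta$ of \eqref{eq-eta-zb}, which is the place where the Willmore equation \eqref{willmore} of the original surface enters; and the conformality $\<\eta,\eta\>=0$ together with $\rho_{\zb}=\bar\mu\rho+4\<\eta,\bar\kappa\>$ from \eqref{eq-eta-zb}. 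Differentiating \eqref{yhat-z2} once more and repeatedly substituting the structure equations \eqref{mov-eq} and the Gauss--Codazzi--Ricci equations \eqref{gauss}--\eqref{ricci}, one expresses $\widehat D_{\zb}\hat\kappa$ and then $\widehat D_{\zb}\widehat D_{\zb}\hat\kappa$ in terms of $\eta,\kappa,\rho,\mu,s$, and the Willmore identity for $\hat y$ should collapse into a combination of these three inputs. For the converse I would exploit the symmetry of the defining data rather than recompute from scratch: solving \eqref{yhat2}--\eqref{yhat-z2} for $Y$ in terms of the frame of $\hat y$, one identifies the corresponding parameter $\hat\mu$ for $\hat y$ (built from $\mu$ and $\rho$), and it then remains to check that $[Y]$ lies on $\widehat V$ and satisfies \eqref{eq-theta} and \eqref{eq-theta-2} with $\hat\mu$ in place of $\mu$, all of which follow from the relations already established.

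I expect the main obstacle to be the bookkeeping of the change of normal bundle. The Hopf differential $\hat\kappa$ and normal connection $\widehat D$ of $\hat y$ live in $\widehat V^{\bot}$, which differs from $V^{\bot}$ as soon as $\eta\ne 0$; expressing $\widehat D$ and the $\widehat V^{\bot}$-projection in terms of the original $D$ and $V^{\bot}$, and confirming that the third-order Willmore identity survives this projection, is where essentially all of the computation lies. The conformal condition $\<\eta,\eta\>=0$ is exactly what keeps $\hat y$ conformal, while $\theta=0$ and the Willmore equation of $y$ (entering through $D_{\zb}\eta=\tfrac{\bar\mu}{2}\eta$) are precisely what force the Willmore equation of $\hat y$.
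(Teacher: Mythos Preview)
The paper does not contain its own proof of Theorem~\ref{thm-adjoint-dual}; the result is quoted from \cite{ma2} and used as a black box throughout. So there is no in-paper argument to compare your proposal against.

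Your outline is sound as a direct approach. The preliminary identities check out: from \eqref{yhat-z2} one finds $\<\widehat Y_z,\widehat Y_z\>=4\<\eta,\eta\>=0$ by \eqref{eq-theta-2}, and $\<\widehat Y_z,\widehat Y_{\zb}\>=\tfrac12\abs{\rho}^2+4\<\eta,\bar\eta\>$, so $\hat y$ is a branched conformal immersion with branch locus $\{\rho=0,\ \eta=0\}$. You have also correctly isolated the three inputs that drive the computation, and your observation that the Willmore equation of $y$ enters only through $D_{\zb}\eta=\tfrac{\bar\mu}{2}\eta$ is exactly right. The warning about the change of normal bundle is the honest one: expressing $\widehat D$ and the $\widehat V^{\bot}$-projection in terms of the old $D$ and $V^{\bot}$ is where essentially all the labour sits. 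One practical comment: passing to the canonical lift $\widehat Y^{c}=\lambda\widehat Y$ scatters derivatives of $\lambda$ throughout the calculation; it is usually cleaner to keep the non-canonical lift $\widehat Y$ and verify the Willmore condition in a scale-invariant form, or equivalently to show directly that the conformal Gauss map $p\mapsto\widehat V(p)$ is harmonic. For the converse, the symmetry argument you describe is the right idea, but note that one must also confirm that the new $\hat\mu$ satisfies the half-touching equation $\hat\mu_z-\tfrac12\hat\mu^2-\hat s=0$ with respect to the Schwarzian $\hat s$ of $\hat y$, which is an additional (though routine) computation.
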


As to the uniqueness problem of adjoint transforms, we define a $6$-form using the discriminant of the quadratic equation about $\bar\mu$ \eqref{eq-theta-2} as below \cite{ma0,mawang}:
\begin{equation}\label{eq-Theta2}
\Theta_0=\left[\<D_{\zb}\kappa,\kappa\>^2-\<D_{\zb}\kappa,
D_{\zb}\kappa\>\<\kappa,\kappa\>\right](\d z)^6.
\end{equation}
It is straightforward to verify the independence to the choice of coordinate $z$. Moreover, by the Willmore condition, this form is holomorphic.

In general, a surface is called \emph{superconformal} if $\kappa$ itself is isotropic; it is called \emph{totally isotropic} if $\kappa$ and its derivatives are all isotropic. When $\kappa$ is isotropic, i.e.,  $\langle\kappa,\kappa\rangle\equiv 0$, the Willmore condition \eqref{willmore} implies $\<\kappa,D_{\bar{z}}\kappa\>=\<D_{\bar{z}}\kappa,D_{\bar{z}}\kappa\>=0$. Therefore $\Theta_0$ vanishes in this case. Now we can state

\begin{theorem}\label{thm-adjoint-no}~\cite{ma2}
\begin{enumerate}
\item
  If $\langle\kappa,\kappa\rangle\equiv0$, then any solution to \eqref{eq-theta} defines an adjoint surface of $y$ via \eqref{yhat2}.
\item
 If $\langle\kappa,\kappa\rangle\not\equiv0$ and $\Theta_0\ne 0$, then there exists exactly two solutions to \eqref{eq-theta-2}. They provide two adjoint surfaces of $y$ via \eqref{yhat2}. Moreover, $y$ is not S-Willmore.
\item
 If $\langle\kappa,\kappa\rangle\not\equiv0$ and $\Theta_0$ vanishes, then there exists exactly one solution to \eqref{eq-theta-2}. There is a unique adjoint surface of $y$ via \eqref{yhat2}. Especially, if $y$ is S-Willmore, then the unique adjoint surface is its dual surface.
\end{enumerate}
\end{theorem}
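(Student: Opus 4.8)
The plan is to read \eqref{eq-theta-2} as a quadratic polynomial in the single unknown $\bar\mu$, whose coefficients are the M\"obius invariants $\<\kappa,\kappa\>$, $\<\kappa,D_{\bar z}\kappa\>$ and $\<D_{\bar z}\kappa,D_{\bar z}\kappa\>$, and to read off the number of adjoint surfaces from the number of admissible roots. First I would recall (or recover by differentiating \eqref{yhat2} with the structure equations \eqref{mov-eq}) that imposing the half-touching equation \eqref{eq-theta} is exactly what reduces $\widehat Y_z$ to the form \eqref{yhat-z2}; once this holds a direct computation gives $\<\widehat Y_z,\widehat Y_z\>=4\<\eta,\eta\>$, so that \eqref{eq-theta-2} is precisely the conformality condition for $[\widehat Y]$. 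Thus producing an adjoint surface means solving the pair \eqref{eq-theta} and \eqref{eq-theta-2} simultaneously, and the three cases are distinguished by how \eqref{eq-theta-2} constrains $\mu$.

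For part (1), assume $\<\kappa,\kappa\>\equiv 0$. Differentiating this identity in $\bar z$ gives $\<D_{\bar z}\kappa,\kappa\>\equiv 0$; differentiating once more and substituting the Willmore equation \eqref{willmore} in the form $D_{\bar z}D_{\bar z}\kappa=-\tfrac{\bar s}{2}\kappa$ yields $\<D_{\bar z}\kappa,D_{\bar z}\kappa\>\equiv 0$ as well. Hence all three coefficients of the quadratic \eqref{eq-theta-2} vanish identically and the conformality condition is vacuous. The only surviving requirement is the Riccati-type equation \eqref{eq-theta}, which is locally solvable and admits a large family of solutions; each solution produces, via \eqref{yhat2}, an adjoint surface. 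This is the assertion of part (1).

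For parts (2) and (3), assume $\<\kappa,\kappa\>\not\equiv 0$. By the real-analyticity of the invariants (Remark~\ref{analytic}) the locus where $\<\kappa,\kappa\>\neq 0$ is open and dense, and there \eqref{eq-theta-2} is a genuine quadratic in $\bar\mu$ with leading coefficient $\tfrac14\<\kappa,\kappa\>$, whose discriminant equals $\Theta_0/(\d z)^6$ from \eqref{eq-Theta2}. Root counting then gives two distinct roots when $\Theta_0\neq 0$ (part 2) and a single double root $\bar\mu=-2\<\kappa,D_{\bar z}\kappa\>/\<\kappa,\kappa\>$ when $\Theta_0$ vanishes (part 3). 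For the S-Willmore dichotomy I would argue that $y$ is S-Willmore exactly when $\eta=0$ for some choice of $\mu$, i.e. $D_{\bar z}\kappa$ is pointwise proportional to $\kappa$ (see \eqref{swillmore}, \eqref{eq-eta}); substituting $D_{\bar z}\kappa=-\tfrac{\bar\mu}{2}\kappa$ into \eqref{eq-Theta2} makes the two terms cancel, forcing $\Theta_0\equiv 0$. Consequently $\Theta_0\neq 0$ rules out S-Willmore, which proves the last clause of part (2). In part (3), if in addition $y$ is S-Willmore then its dual $\mu$ satisfies $\<\eta,\eta\>=0$ and hence is a root of \eqref{eq-theta-2}; uniqueness of the root identifies it with the double root, so that \eqref{yhat2} reproduces the dual surface \eqref{yhat}.

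The step I expect to be the main obstacle is verifying that the values of $\mu$ extracted \emph{algebraically} from \eqref{eq-theta-2} in parts (2) and (3) simultaneously satisfy the \emph{differential} half-touching equation \eqref{eq-theta}, equivalently the relation $D_{\bar z}\eta=\tfrac{\bar\mu}{2}\eta$ in \eqref{eq-eta-zb}. Unlike part (1), here $\mu$ is already pinned down pointwise, so \eqref{eq-theta} becomes a genuine integrability condition rather than something one can solve freely; its verification should use the Willmore equation \eqref{willmore} together with the conformal Gauss and Codazzi equations \eqref{gauss}--\eqref{codazzi}, and also the holomorphicity of $\Theta_0$ to propagate the roots real-analytically across the zero locus of $\<\kappa,\kappa\>$ so that the adjoint surfaces are globally well defined. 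Granting this compatibility, each admissible root yields an adjoint surface by \eqref{yhat2}, and Theorem~\ref{thm-adjoint-dual} confirms it is again Willmore, completing all three cases.
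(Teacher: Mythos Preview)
The paper does not actually prove Theorem~\ref{thm-adjoint-no}; it is quoted from \cite{ma2}, with only the setup (the quadratic \eqref{eq-theta-2} in $\bar\mu$ and its discriminant $\Theta_0$) reproduced in the surrounding text. Your proposal follows exactly this setup and is the intended argument: the trichotomy is read off from the number of roots of the quadratic, and the S-Willmore clauses follow from the observation that $D_{\bar z}\kappa\parallel\kappa$ forces $\Theta_0\equiv 0$.

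You are right that the only nontrivial point is the compatibility of the algebraic condition \eqref{eq-theta-2} with the differential condition \eqref{eq-theta}. This can be closed as follows. Using Willmore \eqref{willmore} but \emph{not} yet assuming $\theta=0$, one computes
\[
D_{\bar z}\eta=\tfrac{\bar\mu}{2}\eta+\tfrac{\bar\theta}{2}\kappa,
\]
whence
\[
\partial_{\bar z}\<\eta,\eta\>=\bar\mu\<\eta,\eta\>+\bar\theta\<\kappa,\eta\>,\qquad
\partial_{\bar z}\<\kappa,\eta\>=\<\eta,\eta\>+\tfrac{\bar\theta}{2}\<\kappa,\kappa\>.
\]
If $\Theta_0\neq 0$, each root of \eqref{eq-theta-2} has $\<\kappa,\eta\>\neq 0$ (equality would make it the double root), so the first identity gives $\bar\theta=0$. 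If $\Theta_0\equiv 0$, the unique root satisfies $\<\eta,\eta\>=\<\kappa,\eta\>=0$ identically, and the second identity together with $\<\kappa,\kappa\>\not\equiv 0$ again forces $\bar\theta=0$. Thus in parts (2) and (3) the half-touching condition \eqref{eq-theta} holds automatically, and \eqref{eq-eta-zb} is then justified. With this gap filled, your argument is complete and coincides with the approach of \cite{ma2} that the paper is citing.
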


\section{Superconformal Willmore 2-spheres in $S^5$}

To prove the classification theorem~\ref{main}, we consider two cases separately:
\begin{enumerate}
\item The \emph{Superconformal} case: $\langle \kappa,\kappa\rangle\equiv 0$ identically;
\item The \emph{Non-superconformal} or \emph{non-isotropic} case: $\langle \kappa,\kappa\rangle\ne 0$ on an open dense subset.
\end{enumerate}
We may also restrict to consider only the case when it is a \emph{full} immersion in $S^5$, i.e., the surface is not contained in any lower dimensional sphere (plane). This excludes the superconformal surfaces in $S^4$.

\begin{proposition}\label{isotropic0}
Any superconformal Willmore surface $y:M\to S^5$ is S-Willmore. Moreover:\\
(1) The dual Willmore surface $\tilde{Y}$ is also superconformal and S-Willmore at its regular points.\\
(2) If $y$ is full in $S^5$, then any other adjoint transform $\widehat{Y}$ of $Y$ is not S-Willmore and also non-superconformal.
\end{proposition}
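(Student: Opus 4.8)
\section*{Proof Proposal}

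The plan is to exploit the hypothesis $\<\kappa,\kappa\>\equiv 0$ together with the Willmore equation \eqref{willmore} to extract strong algebraic constraints on $\kappa$ and its derivatives, and then translate these into statements about $\mu$ and the adjoint surfaces. First I would establish that $y$ is S-Willmore. Differentiating the isotropy condition $\<\kappa,\kappa\>\equiv 0$ with respect to $\bar z$ gives $\<D_{\zb}\kappa,\kappa\>=0$; differentiating once more and using the Willmore equation $D_{\zb}D_{\zb}\kappa=-\tfrac{\bar s}{2}\kappa$ yields $\<D_{\zb}\kappa,D_{\zb}\kappa\>=0$ as well (this is precisely the computation quoted just before Theorem~\ref{thm-adjoint-no}). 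So all three coefficients appearing in \eqref{eq-theta-2} vanish, meaning the isotropy equation $\<\eta,\eta\>=0$ holds \emph{automatically} for every choice of $\mu$. The key point is then to locate the \emph{distinguished} $\mu$ for which $\eta=D_{\zb}\kappa+\tfrac{\bar\mu}{2}\kappa=0$ identically. In the codimension-three setting $V^\perp\otimes\C$ is $3$-dimensional, and since both $\kappa$ and $D_{\zb}\kappa$ are isotropic and mutually orthogonal in the sense just derived, a dimension/isotropy count in this rank-$3$ complex space with an indefinite-free (Euclidean-complexified) form forces $D_{\zb}\kappa$ to be a scalar multiple of $\kappa$ away from umbilics; writing that scalar as $-\bar\mu/2$ gives a solution of \eqref{swillmore}, hence $y$ is S-Willmore. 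This distinguished adjoint transform is the dual surface $\tilde Y$.

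For part (1), I would show $\tilde Y$ is again superconformal. The dual surface envelops the same mean curvature sphere congruence, so its Hopf differential $\tilde\kappa$ lives in the same normal geometry; the cleanest route is to use Theorem~\ref{thm-adjoint-dual}, which says $y$ is an adjoint transform of $\tilde Y$, together with the duality/symmetry of the construction. Concretely I would compute $\tilde\kappa$ in terms of $\kappa$, $\rho$, and $\mu$ using the structure equations \eqref{mov-eq} applied to the frame adapted to $\tilde Y$ (with $\tilde N$ playing the role formerly played by $Y$), and verify $\<\tilde\kappa,\tilde\kappa\>=0$ by direct substitution, invoking the already-established relations $\<\kappa,\kappa\>=\<D_{\zb}\kappa,\kappa\>=\<D_{\zb}\kappa,D_{\zb}\kappa\>=0$. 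That $\tilde Y$ is itself S-Willmore at its regular points is then immediate from Theorem~\ref{thm-adjoint-dual} (it has $y$ as a dual), or equivalently from $\<\tilde\kappa,\tilde\kappa\>=0$ combined with the same isotropy argument applied to $\tilde Y$.

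For part (2), the heart of the matter is to analyze a \emph{general} adjoint transform $\widehat Y$, corresponding to a solution $\mu$ of \eqref{eq-theta} that is \emph{different} from the distinguished one producing $\tilde Y$. Here $\eta\not\equiv 0$. I would compute the Hopf differential $\widehat\kappa$ of $\widehat Y$ and test two things: whether $\<\widehat\kappa,\widehat\kappa\>\equiv 0$ (superconformality) and whether $D_{\widehat{\zb}}\widehat\kappa$ is proportional to $\widehat\kappa$ (the S-Willmore condition \eqref{swillmore} for $\widehat Y$). The expected mechanism is that $\widehat\kappa$ picks up a contribution built from $\eta$, and because $\eta$ is forced to be isotropic but \emph{linearly independent} from $\kappa$ (as $\mu$ differs from the dual value), the fullness assumption guarantees $\widehat\kappa$ acquires a genuinely non-isotropic component, so $\<\widehat\kappa,\widehat\kappa\>\not\equiv 0$; likewise the S-Willmore defect $\widehat\Theta_0$ becomes nonzero, which by Theorem~\ref{thm-adjoint-no}(2) certifies that $\widehat Y$ is not S-Willmore.

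The main obstacle I anticipate is the bookkeeping in part (2): one must carry out the frame change to the $\widehat Y$-adapted canonical frame, re-express $\widehat\kappa$ and its normal covariant derivative in the old data $(\kappa, D_{\zb}\kappa, \eta, \rho, \mu)$, and then verify the non-vanishing of the relevant invariant. The subtle step is justifying the \emph{linear independence} of $\kappa$ and $\eta$ precisely under the fullness hypothesis---without fullness the surface could collapse into $S^4$ where everything degenerates to the superconformal $S^4$ case, so the dimension count in $V^\perp$ (rank $3$ over $\C$) is exactly what makes the codimension $n=5$ argument work and is where I would spend the most care, checking that $\kappa$, $\eta$ span a $2$-dimensional isotropic subspace whose orthogonal complement in the rank-$3$ space carries the non-isotropic direction responsible for breaking both superconformality and the S-Willmore property of $\widehat Y$.
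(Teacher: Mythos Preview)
Your approach to the opening statement and to part~(1) is essentially the paper's: differentiate $\<\kappa,\kappa\>=0$ twice, use \eqref{willmore}, and conclude that the isotropic span of $\kappa$ and $D_{\zb}\kappa$ in the rank-$3$ complexified normal bundle has rank~$1$, whence $D_{\zb}\kappa\parallel\kappa$. For $\tilde Y$ the paper does not re-derive $\tilde\kappa$ from scratch but simply reads off from \eqref{yhat-z} (with $\eta=0$) that $\tilde Y_{zz}\equiv 0\pmod{\tilde Y,\tilde Y_z,\kappa}$, which gives $\<\tilde\kappa,\tilde\kappa\>=0$ in one line; your plan would work too but is heavier.

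Part~(2), however, contains a concrete error of sign in the mechanism. You assert that for a non-dual $\mu$ the section $\eta=D_{\zb}\kappa+\tfrac{\bar\mu}{2}\kappa$ is isotropic \emph{and linearly independent} from $\kappa$, and you base the rest of the argument on $\{\kappa,\eta\}$ spanning a $2$-dimensional isotropic plane. But you have just shown $D_{\zb}\kappa=-\tfrac{\bar\mu^*}{2}\kappa$ for the distinguished $\mu^*$, so for any other $\mu$ one has
\[
\eta=\tfrac{1}{2}(\bar\mu-\bar\mu^*)\,\kappa,
\]
which is \emph{parallel} to $\kappa$, not independent. Thus the ``$2$-dimensional isotropic subspace whose orthogonal complement carries the non-isotropic direction'' does not exist, and your proposed computation of $\widehat\kappa$ and $\widehat\Theta_0$ cannot proceed as described. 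The paper's argument runs differently: because $\eta\parallel\kappa$, the relevant object is $D_z\eta$, and fullness in $S^5$ is used exactly to show $D_z\eta\nparallel\kappa$ (otherwise $Y$ would sit in the span of $\{Y,Y_z,Y_{\zb},Y_{z\zb},\kappa,\bar\kappa\}$). One then checks $\widehat Y_{zz}\equiv D_z\eta\pmod{\widehat Y,\widehat Y_z,\eta}$, and since $D_z\eta$ is orthogonal neither to itself nor to $\kappa$ in general, $\<\widehat Y_{zz},\widehat Y_{zz}\>\neq 0$ on an open set, so $\widehat Y$ is not superconformal. Finally, rather than computing $\widehat\Theta_0$, the paper derives ``not S-Willmore'' from Theorem~\ref{thm-adjoint-no} and Theorem~\ref{thm-adjoint-dual}: $\widehat Y$ being non-superconformal forces its adjoint transform to be unique, and that unique adjoint is $Y$; if $\widehat Y$ were S-Willmore its dual would also be an adjoint of $\widehat Y$, hence equal to $Y$, making $\{Y,\widehat Y\}$ a dual pair---contradicting $\widehat Y\neq\tilde Y$.
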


The first conclusion of this proposition was contained in \cite{Da-V} by Dajczer and Vlachos. Here we give an independent and simple proof, which makes the full picture more clear.

\begin{proof}
Differentiating $\<\kappa,\kappa\>=0$ twice and using the Willmore condition, we know
\[
\langle \kappa,D_{\bar{z}}\kappa\rangle=\langle D_{\bar{z}}\kappa,D_{\bar{z}}\kappa\rangle=0.
\]
In the non-trivial case $\kappa\not\equiv 0$, due to the restriction on the codimension, the isotropic sub-bundle $Span\{\kappa,D_{\bar{z}}\kappa\}$ has complex rank $1$. Hence $D_{\bar{z}}\kappa\parallel \kappa$. So this is a S-Willmore surface.

The dual surface $\widetilde{Y}$ of $Y$ is defined in \eqref{yhat} by $\mu$ satisfying $\eta=D_{\bar{z}}\kappa+\frac{\bar{\mu}}{2}\kappa\equiv 0$.
As a consequence, from \eqref{yhat-z},
$\widetilde{Y}_{zz}=0\mod \{\widetilde{Y}, \widetilde{Y}_z, \kappa\}.$
Hence  $\widetilde{Y}$ is still superconformal. It is obviously a S-Willmore surface with a dual Willmore surface $Y$.

For any other adjoint transform $\widehat{Y}$, now $\eta=D_{\bar{z}}\kappa+\frac{\bar{\mu}}{2}\kappa$ is not identically zero. $\eta$ is parallel to $\kappa$ since $Y$ is S-Willmore. So $D_{z}\eta$ is not parallel to $\kappa$, otherwise $Y$ will be contained in $\hbox{Span}\{Y,Y_z,Y_{\bar{z}}, Y_{z\bar{z}},\kappa,\bar\kappa\}$ and hence not full in $S^5$. Since the co-dimension is $3$, we have $\<D_{z}\eta,D_{z}\eta\>\neq 0$ on an open subset. By the same computation we see $\widehat{Y}_{zz}=D_{z}\eta\mod \{\widehat{Y}, \widehat{Y}_z, \eta\}.$ Hence $\<\widehat{Y}_{zz},\widehat{Y}_{zz}\>\neq 0$. So the Hopf differential of $\widehat{Y}$ is not isotropic.

From this fact and Theorem \ref{thm-adjoint-no}, $\widehat{Y}$ has a unique adjoint transform. Since $Y$ is an adjoint surface of $\widehat{Y}$, $Y$ is the unique one of $\widehat{Y}$. If $\widehat{Y}$ is S-Willmore, then its dual surface, as an adjoint transform, has to coincide with $Y$. Thus $\{Y,\widehat{Y}\}$ forms a pair of dual Willmore surface. This contradicts with the uniqueness of the dual Willmore surface and the assumption that $\widehat{Y}\ne \widetilde{Y}$.
\end{proof}

The result below follows directly from Proposition~\ref{isotropic0} and Ejiri's classification of S-Willmore 2-spheres \cite{Ejiri}. Here we provide an independent proof, not only for the sake of being self-contained, but also because that the conformal harmonic map $\xi:M\to S^6_1$ appearing in this proof is interesting in its own right.  A similar construction, i.e., finding out some globally defined conformal map $\xi:M\to S^6_1$, plays a crucial role in the discussion of the non-isotropic case in the next section.
\begin{proposition}\label{isotropic}
A full superconformal Willmore immersion $y:S^2\to S^5$ must be M\"obius equivalent to a superconformal minimal surface in $\mathbb{R}^5$.
\end{proposition}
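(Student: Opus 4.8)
The plan is to reduce the statement, via Remark~\ref{rem-minimal}, to showing that \emph{some fixed light-like point lies on every mean curvature sphere of $y$}; once this holds, sending that point to $\infty$ exhibits $y$ as a minimal surface in $\R^5$, which is superconformal by hypothesis. The device that produces this fixed point is a conformal harmonic map $\xi:S^2\to S^6_1$ built from the ``extra'' normal direction, together with Proposition~\ref{prop-xi-harmonic}.

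First I would fix the normal geometry. By Proposition~\ref{isotropic0} the surface is S-Willmore with $D_{\zb}\kappa\parallel\kappa$, and superconformality gives $\<\kappa,\kappa\>=\<\kappa,D_{\zb}\kappa\>=0$. Since $n=5$, the normal bundle $V^{\bot}$ has real rank $3$: the real $2$-plane spanned by $\Re\kappa,\Im\kappa$, together with a unit section $\xi=e_3$ of its orthogonal line bundle normalized by $\<e_3,e_3\>=1$, so that $e_3$ takes values in the de Sitter space $S^6_1\subset\R^7_1$. Feeding $\xi=e_3$ into the last structure equation of \eqref{mov-eq} and using $\<e_3,\kappa\>=\<e_3,D_{\zb}\kappa\>=0$ (the latter because $D_{\zb}\kappa\parallel\kappa$), one gets $(e_3)_z=D_ze_3$; expanding $D_ze_3$ in $\{\kappa,\bar\kappa,e_3\}$ and noting that its $e_3$-component and its $\kappa$-component both vanish (the latter from $\<D_ze_3,\bar\kappa\>=-\<e_3,D_z\bar\kappa\>=0$), I find $(e_3)_z=q\,\bar\kappa$ for a function $q$. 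Consequently $\<(e_3)_z,(e_3)_z\>=q^2\<\bar\kappa,\bar\kappa\>=0$, so $\xi=e_3$ is conformal, with branch points exactly at the zeros of $q$.

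Next comes harmonicity, which I expect to follow cleanly from the Ricci equation \eqref{ricci}. Since $\<e_3,\kappa\>=\<e_3,\bar\kappa\>=0$, one has $R^D_{\zb z}e_3=0$; substituting $D_ze_3=q\bar\kappa$, $D_{\zb}e_3=\bar q\kappa$ and using superconformality (which forces the $\bar\kappa$-component of $D_z\kappa$ to vanish, as $\<D_z\kappa,\kappa\>=\tfrac12\partial_z\<\kappa,\kappa\>=0$), the vanishing of the $\kappa$- and $\bar\kappa$-components of $R^D_{\zb z}e_3$ yields $q_{\zb}+q\bar A=0$ for the relevant connection coefficient $A$, whence $(e_3)_{z\zb}$ is a multiple of $e_3$. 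Thus $\xi=e_3:S^2\to S^6_1$ is a \emph{branched conformal harmonic map}. Proposition~\ref{prop-xi-harmonic} then applies: $\xi$ is totally isotropic and lies in the hyperplane orthogonal to a fixed nonzero light-like vector $\mathbf v$. Because $\mathbf v$ is constant, it is orthogonal to $e_3$ and to all its derivatives; since $(e_3)_z=q\bar\kappa$ and $(e_3)_{\zb}=\bar q\kappa$ recover $\kappa$ and $\bar\kappa$ wherever $q\ne0$, real-analyticity (Remark~\ref{analytic}) gives $\mathbf v\bot\kappa,\bar\kappa,e_3$, i.e. $\mathbf v\bot V^{\bot}$. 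Under the dictionary of Section~2 this says precisely that the fixed point $[\mathbf v]$ lies on every mean curvature sphere, so by the argument of Remark~\ref{rem-minimal} (send $[\mathbf v]$ to $\infty$, all spheres become affine planes, $H\equiv0$) the surface $y$ is M\"obius equivalent to a superconformal minimal surface in $\R^5$.

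The step I expect to be the genuine obstacle is the \emph{global} construction of $\xi=e_3$ over $S^2$. The direction $e_3$ is a priori defined only up to sign and only away from the umbilic set $\{\kappa=0\}$, where the $\kappa$-plane degenerates; I must verify that the orthogonal line bundle to this plane extends smoothly across the (isolated, by analyticity) umbilic points and that the resulting $e_3$ is a single-valued conformal harmonic map on all of $S^2$, so that Proposition~\ref{prop-xi-harmonic} truly applies. The local conformality and harmonicity identities above are routine once the Ricci equation is invoked; the essential work lies in this global extension, which is parallel to the technical point deferred to the Appendix. I must also rule out the degenerate alternative $q\equiv0$: there $e_3$ would be constant and $y$ would lie in the hyperplane $e_3^{\bot}$, i.e. in an $S^4$, contradicting fullness in $S^5$.
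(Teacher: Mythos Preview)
Your proposal is correct and follows the same overall strategy as the paper: construct the unit normal $\xi=e_3\perp\kappa,\bar\kappa$, show that $\xi:S^2\to S^6_1$ is a branched conformal harmonic map, and invoke Proposition~\ref{prop-xi-harmonic}; the global extension across umbilics is handled exactly as you anticipate, via Lemma~\ref{lemma-subbundle} in the Appendix, and the degenerate case $q\equiv0$ is ruled out by fullness just as you say.

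The one genuine difference is the endgame after Proposition~\ref{prop-xi-harmonic}. You use its \emph{second} conclusion directly: the fixed light-like vector $\mathbf v$ is orthogonal to $\xi,\xi_z,\xi_{\zb}$, hence to all of $V^{\bot}=\mathrm{Span}\{\kappa,\bar\kappa,e_3\}$, so $\mathbf v\in V$ and $[\mathbf v]$ lies on every mean curvature sphere. The paper instead uses the \emph{first} conclusion (total isotropy): it expands $\xi_{zz}$ and $\xi_{zzz}$ in the frame $\{Y,Y_{\zb}+\tfrac{\bar\mu}{2}Y,\widehat Y,\bar\kappa\}$ and computes $\<\xi_{zzz},\xi_{zzz}\>=-2\rho\,\<\xi_z,\kappa\>^2$, concluding $\rho\equiv0$, so that by Remark~\ref{rem-minimal} the dual surface $\widehat Y$ degenerates to the fixed point. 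Your route is shorter and avoids introducing $\widehat Y$ and $\rho$ at all; the paper's computation has the side benefit of identifying the fixed point explicitly as the dual surface and of rehearsing the frame expansion that is reused in the harder non-superconformal case of Section~5. Your harmonicity argument via the Ricci equation is also valid, though the paper's version (observe $\xi_{z\zb}\in V^{\bot}$ is real and orthogonal to $\xi_z\parallel\bar\kappa$, hence $\parallel\xi$) is a touch quicker.
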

\begin{proof}
By Proposition~\ref{isotropic0}, $y$ is an S-Willmore surface. We may write $D_{\zb}\kappa=-\frac{\bar\mu}{2}\kappa$, and the dual Willmore surface is given by \eqref{yhat}. Here we have assumed that it is not totally umbilic.

By Lemma~\ref{lemma-subbundle} in the appendix (see also \cite{Ejiri}), the holomorphic isotropic line-bundle $\mathrm{Span}\{\kappa\}$ is defined on the whole $S^2$. This fact makes it possible to choose a real unit normal vector $\xi$ such that
\[\xi\in V^{\bot},~~\xi~\bot~ \kappa,~\bar\kappa.\]
It is evident that $\xi$ can be chosen globally and consistently. Notice that
$\<D_z\xi,\bar\kappa\>=-\<\xi,D_z\bar\kappa\>
=\<\xi,\frac{\mu}{2}\bar\kappa\>=0,~ \<D_z\xi,\xi\>=0,~\<\xi,\kappa\>=0.$ Then the structure equation
tells us that
\[
\xi_z=D_z\xi-2\langle\xi,\kappa\rangle (Y_{\bar{z}}+\frac{\bar\mu}{2}Y)=D_z\xi=\lambda \bar\kappa
\]
for some function $\lambda$. It follows
$\<\xi_z,\xi_z\>=0$, i.e, $\xi:S^2\to S^6_1$ is a conformal map.

Differentiating once more, one can show that $\xi_{z\bar{z}}\in V^{\bot}$ is a real normal bundle section by the structure equations and $\<\bar\kappa,\bar\kappa\>=0=\<\bar\kappa,D_z\bar\kappa\>$.
Since $\xi_{z\bar{z}}~\bot~\xi_z$ and $\xi_z\parallel \bar\kappa$, there must be $\xi_{z\bar{z}}\parallel \xi$. Thus
$\xi:S^2\to S^6_1$ is a branched conformal harmonic map.

According to Proposition~\ref{prop-xi-harmonic} \cite{Ejiri-i}, $\xi_{zzz}$ must be isotropic on $S^2$. Direct computation shows
\begin{align*}
\xi_{zz}&=(\cdots)\bar\kappa-2\langle \xi_z,\kappa\rangle
(Y_{\bar{z}}+\frac{\bar\mu}{2}Y),\\
\xi_{zzz}&=(\cdots)\bar\kappa+(\cdots)(Y_{\bar{z}}+\frac{\bar\mu}{2}Y)
-\langle \xi_z,\kappa\rangle(\rho Y+\widehat{Y}),\\
\Longrightarrow~~~
0&\equiv \langle\xi_{zzz},\xi_{zzz}\rangle=
-2\rho\langle \xi_z,\kappa\rangle^2.
\end{align*}
If $\langle \xi_z,\kappa\rangle=0$ on an open subset of $S^2$, the real analytical property forces it to be zero identically. Because $\xi_z=\lambda\bar\kappa$ and $\kappa\ne 0$ on an open dense subset, there follows $\xi_z\equiv 0$. So $\xi$ is a constant unit vector in $\mathbb{R}^7_1$. In this case the original surface $y$ is a superconformal surface in $S^4$, a contradiction to the fullness assumption.
The other possibility is $\rho\equiv 0$. According to Remark~\ref{rem-minimal},  $y$ is M\"obius equivalent to a minimal surface in $\mathbb{R}^5$. This finishes the proof.
\end{proof}

\section{Non-superconformal Willmore 2-spheres in $S^5$}

The classification theorem~\ref{main} follows from Proposition~\ref{isotropic} and Proposition~\ref{thm-main3} below.

\begin{proposition}\label{thm-main3}
Let $y: S^2\to S^5$ be a full Willmore immersion with $\<\kappa,\kappa\>\not\equiv 0$. Then $y$ belongs to one of the following two cases:
\begin{enumerate}
\item $y$ is M\"obius equivalent to a minimal surface in $\mathbb{R}^5$ when it is S-Willmore;
\item $y$ is an adjoint transform of a  superconformal minimal surface in $\mathbb{R}^5$ ($y$ is not S-Willmore).
\end{enumerate}
\end{proposition}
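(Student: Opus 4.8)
The proof proceeds by distinguishing whether $y$ is S-Willmore, and the engine in both branches is the vanishing of holomorphic differentials on the sphere. First I would note that the sextic form $\Theta_0$ of \eqref{eq-Theta2} is a holomorphic section of $K^6$; since $\deg K^6=-12<0$ on a genus-zero surface, it vanishes identically. By Theorem~\ref{thm-adjoint-no} we are therefore always in the third alternative, so $y$ possesses a \emph{unique} adjoint transform $\widehat{Y}$, governed by the isotropic normal field $\eta=D_{\zb}\kappa+\frac{\bar\mu}{2}\kappa$ of \eqref{eq-eta} with $\langle\eta,\eta\rangle=0$. If $y$ is S-Willmore, then by definition $\eta\equiv0$ and this adjoint is the dual surface; in that case I would invoke Ejiri's classification of S-Willmore $2$-spheres, noting that the remaining super-Willmore alternative is totally isotropic, hence superconformal and contained in an even-dimensional sphere. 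As $y$ is full in $S^5$ with $\langle\kappa,\kappa\rangle\not\equiv0$, that alternative is excluded and only the minimal surface in $\mathbb{R}^5$ survives, which is conclusion~(1).

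Assume from now on that $y$ is not S-Willmore, so $\eta\not\equiv0$. Mimicking the proof of Proposition~\ref{isotropic}, I would single out the real unit normal field $\xi$ orthogonal to the isotropic plane $\mathrm{Span}_{\mathbb{R}}\{\Re\eta,\Im\eta\}$ inside the rank-three normal bundle; because the adjoint is unique, $\eta$ spans a globally defined line and $\xi$ is well defined on $S^2$ away from the zeros of $\eta$. Using $\langle\xi,\xi\rangle\equiv1$, the relation $D_z\bar\eta=\tfrac{\mu}{2}\bar\eta$ (the conjugate of \eqref{eq-eta-zb}), and the Gram structure of the basis $\{\eta,\bar\eta,\xi\}$, one checks that $D_z\xi$ is proportional to $\bar\eta$ and hence isotropic; feeding this into the fourth line of \eqref{mov-eq} and observing that the remaining contributions lie along the null directions $Y$ and $Y_{\zb}$ gives $\langle\xi_z,\xi_z\rangle=0$. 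Thus $\xi:S^2\to S^6_1$ is a conformal map into the de Sitter space.

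Differentiating once more and substituting the Willmore equation \eqref{willmore} and the adjoint identities \eqref{eq-eta-zb}, I expect $\xi_{z\zb}$ to be proportional to $\xi$, so that $\xi$ is a branched conformal harmonic map and Proposition~\ref{prop-xi-harmonic} applies: $\xi$ is totally isotropic and its image lies in the orthogonal complement of a fixed light-like vector $v_0$. I would then exploit the total isotropy relations $\langle\xi_z^{(k)},\xi_z^{(l)}\rangle\equiv0$. Expanding the lowest non-trivial one in the canonical frame produces, just as the relation $\langle\xi_{zzz},\xi_{zzz}\rangle=-2\rho\langle\xi_z,\kappa\rangle^2$ did in the superconformal case, an analytic identity in the M\"obius invariants; real-analyticity then splits the surface into two branches, of which the fullness of $y$ excludes the degenerate one, while the surviving branch forces the Hopf differential of the adjoint surface $\widehat{Y}$ to be isotropic, i.e. $\widehat{Y}$ is superconformal. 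Applying Proposition~\ref{isotropic} to the superconformal Willmore $2$-sphere $\widehat{Y}$ (the fixed null vector $v_0$ furnishing, via Remark~\ref{rem-minimal}, the point at infinity at which $\widehat{Y}$ becomes minimal, and the degenerate sub-case $\widehat{Y}\subset S^4$ being reconciled with the fullness of $y$) shows that $\widehat{Y}$ is M\"obius equivalent to a superconformal minimal surface in $\mathbb{R}^5$. Finally, by the symmetry of the adjoint relation in Theorem~\ref{thm-adjoint-dual}, $y$ itself is an adjoint transform of this superconformal minimal surface, which is conclusion~(2).

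The hard part is twofold. Analytically, one must actually verify that $\xi$ is harmonic: this is the step where the full force of the Willmore equation \eqref{willmore} and the adjoint relations \eqref{eq-eta-zb} is needed, and it is not a formal consequence of conformality. Globally---and this is the genuinely delicate issue flagged in the introduction and deferred to the Appendix---the field $\xi$ and the function $\mu$ are only defined a priori away from the umbilic points of $y$ and the zeros of $\eta$, so one must prove that $\xi:S^2\to S^6_1$ extends smoothly across these singular points before Proposition~\ref{prop-xi-harmonic} can legitimately be invoked on the closed surface. Once globality is secured, converting the abstract output of Proposition~\ref{prop-xi-harmonic}---total isotropy together with a fixed null vector---into the concrete geometric assertions that $\widehat{Y}$ is superconformal and minimal is the last, and most geometric, obstacle.
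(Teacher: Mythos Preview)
Your overall architecture is right: vanish $\Theta_0$, get a unique adjoint $\widehat{Y}$, build the real unit normal $\xi\perp\eta,\bar\eta$, prove it is a branched conformal harmonic map into $S^6_1$, and then invoke Proposition~\ref{prop-xi-harmonic}. You also correctly identify the two genuine difficulties (harmonicity of $\xi$, and its global extension across zeros of $\eta$ and $\langle\kappa,\kappa\rangle$).

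There are, however, two places where your route diverges from the paper's, and in one of them your argument has a gap.

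\textbf{S-Willmore case.} You appeal to Ejiri's classification. This is legitimate, but the paper deliberately gives a self-contained proof: it observes that $\rho_{\zb}=\bar\mu\rho$ (from \eqref{eq-eta-zb} with $\eta\equiv0$), so $\Theta_3=\rho\langle\kappa,\kappa\rangle(\d z)^4$ is holomorphic away from umbilics, and then checks by hand that umbilics are removable singularities of $\Theta_3$. Vanishing of $\Theta_3$ on $S^2$ forces $\rho\equiv0$, hence minimality via Remark~\ref{rem-minimal}. Your shortcut is fine but hides this.

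\textbf{Non-S-Willmore case.} Here your final step is the problematic one. You propose to extract from the isotropy relations of $\xi$ an identity that forces the Hopf differential of $\widehat{Y}$ to be isotropic, and then feed the superconformal $\widehat{Y}$ into Proposition~\ref{isotropic}. Two problems: first, you never say what that identity is, and it is not the analogue of $\langle\xi_{zzz},\xi_{zzz}\rangle=-2\rho\langle\xi_z,\kappa\rangle^2$; the actual computation of $\xi_{zz}$ in this case (equation~\eqref{eq-xi-zz}) produces a term along $\widehat{Y}$, and what isotropy gives you is the \emph{orthogonality} $\langle\xi_z,\widehat{Y}_z\rangle=0$, not the isotropy of $\widehat{Y}$'s Hopf differential. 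Second, $\widehat{Y}$ is only a \emph{branched} conformal immersion, so Proposition~\ref{isotropic} does not apply to it as stated.

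The paper's route avoids both issues by reversing your order. From $\langle\xi_z,\widehat{Y}_z\rangle=0$ and $\xi_{z\zb}\parallel\xi$ one checks directly that the mean curvature sphere $\mathrm{Span}\{\widehat{Y},\widehat{Y}_z,\widehat{Y}_{\zb},\widehat{Y}_{z\zb}\}$ is orthogonal to $\xi,\xi_z,\xi_{\zb}$. The fixed null vector $Y^*\perp\xi$ coming from Proposition~\ref{prop-xi-harmonic} is therefore forced to lie in that $4$-dimensional Lorentz subspace, so all mean curvature spheres of $\widehat{Y}$ pass through $[Y^*]$; this is precisely the condition that $\widehat{Y}$ is minimal in the affine $\mathbb{R}^5$ with $[Y^*]$ at infinity. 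Superconformality of $\widehat{Y}$ is deduced \emph{afterwards}, as a corollary of Theorem~\ref{thm-adjoint-no}: a non-superconformal S-Willmore surface has a unique adjoint transform equal to its dual, whereas $\widehat{Y}$ (being Euclidean minimal, hence S-Willmore with dual the point $[Y^*]$) already has $y\ne[Y^*]$ as an adjoint transform. So the logical order is minimal $\Rightarrow$ superconformal, not the reverse, and no appeal to Proposition~\ref{isotropic} for the possibly branched $\widehat{Y}$ is needed.
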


We first recall the following theorem \cite{ma0}.
\begin{theorem}\label{thm-uniqueness}
Let $y:S^2\to S^n$ be a Willmore immersion and not superconformal. Then its adjoint transform is unique and defined globally on $S^2$ as a branched conformal immersion. When $y$ is S-Willmore, this adjoint transform is exactly the dual surface.
\end{theorem}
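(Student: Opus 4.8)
The plan is to combine the algebraic trichotomy of Theorem~\ref{thm-adjoint-no} with the classical vanishing theorem for holomorphic differentials on $S^2$. Since $y$ is not superconformal we have $\<\kappa,\kappa\>\not\equiv0$, so $y$ falls under case (2) or (3) of Theorem~\ref{thm-adjoint-no}, and these two cases are separated precisely by whether the sextic form $\Theta_0$ of \eqref{eq-Theta2} vanishes. I would first observe that $\Theta_0$ is a globally defined holomorphic section of $K^{\otimes6}$, where $K$ denotes the canonical bundle of the underlying Riemann surface (its holomorphicity is recorded just after \eqref{eq-Theta2}). On $S^2\cong\C P^1$ one has $\deg K=-2$, hence $\deg K^{\otimes6}=-12<0$, and a line bundle of negative degree admits no nonzero holomorphic section. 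Therefore $\Theta_0\equiv0$, which forces $y$ into case (3) of Theorem~\ref{thm-adjoint-no}: at each point where $\<\kappa,\kappa\>\ne0$ the quadratic \eqref{eq-theta-2} in $\bar\mu$ has a single (double) root, so the adjoint transform is locally, hence globally, unique.

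To make the root explicit, note that the bracket in \eqref{eq-Theta2} is exactly the discriminant of \eqref{eq-theta-2} regarded as a quadratic in $\bar\mu$; its vanishing gives the double root
\[
\bar\mu=-\frac{2\<D_{\zb}\kappa,\kappa\>}{\<\kappa,\kappa\>},
\]
valid wherever $\<\kappa,\kappa\>\ne0$, and characterized by $\<\eta,\eta\>=0$ together with $\<\eta,\kappa\>=0$, with $\eta$ as in \eqref{eq-eta}. Using \eqref{mov-eq}, \eqref{willmore} and \eqref{eq-eta-zb} one checks, as in \cite{ma2}, that this $\mu$ also solves the compatibility equation \eqref{eq-theta}, so \eqref{yhat2} produces a genuine adjoint surface $[\widehat Y]$, conformal by \eqref{eq-theta-2} via \eqref{yhat-z2}. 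Uniqueness of the double root then gives uniqueness of the adjoint transform. The S-Willmore case is handled separately and briefly: such a $y$ already carries a dual surface (Section~3.1), which is an adjoint transform with $\eta\equiv0$; by the uniqueness just established it must coincide with the adjoint transform above, so that adjoint transform is exactly the dual surface.

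The remaining and most delicate point is global definedness across the zero set $Z=\{\<\kappa,\kappa\>=0\}$, where the formula for $\mu$ acquires apparent poles. All data are real-analytic by Remark~\ref{analytic}, so $Z$ is a proper real-analytic subset and on $S^2\setminus Z$ the map $[\widehat Y]$ is a well-defined real-analytic conformal map into $S^n$. The plan is to renormalize the lift $\widehat Y$ of \eqref{yhat2} near $Z$ so as to keep it bounded, and to show by a removable-singularity argument that the resulting projective map extends real-analytically across $Z$ as a branched conformal immersion, with branch points occurring where the $\rho$ and $\eta$ terms in \eqref{yhat-z2} degenerate simultaneously. I expect this extension step to be the main obstacle: one must control the joint vanishing of $\<\kappa,\kappa\>$ and $\<D_{\zb}\kappa,\kappa\>$ along $Z$ so that $[\widehat Y]$ has a well-defined limit (tending to $[Y]$ at points where $\mu$ blows up), and rule out both essential singularities and any loss of single-valuedness. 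This is precisely the kind of local analysis carried out in the appendix (cf. Lemma~\ref{lemma-subbundle}), on which the global statement ultimately rests.
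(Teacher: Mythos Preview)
Your proposal is correct and follows essentially the same route as the paper: vanish $\Theta_0$ on $S^2$ to force case~(3) of Theorem~\ref{thm-adjoint-no}, read off the unique double root $\bar\mu=-2\<D_{\zb}\kappa,\kappa\>/\<\kappa,\kappa\>=-\<\kappa,\kappa\>_{\zb}/\<\kappa,\kappa\>$, and extend $[\widehat Y]$ across the zeros of $\<\kappa,\kappa\>$ via a Chern-lemma argument (with $[\widehat Y]\to[Y]$ where $\mu\to\infty$). The paper's proof is terser on the extension step, simply citing Chern's lemma for the existence of the limit of $\mu$, whereas you flag this as the main obstacle; in substance the two arguments coincide.
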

\begin{proof}
This is the direct corollary of case (3) of Theorem~\ref{thm-adjoint-no} and the vanishing theorem of holomorphic forms on $S^2$.

Here this unique adjoint transform is given by \eqref{yhat2};
the unique solution $\bar\mu$ is given by
\begin{equation}\label{eq-mu2}
\bar\mu=-\frac{\<\kappa,\kappa\>_{\zb}}{\<\kappa,\kappa\>}
\end{equation}
when $\<\kappa,\kappa\>\ne 0$. Define $\eta=D_{\zb}\kappa+\frac{\bar\mu}{2}\kappa$
as in \eqref{eq-eta} and also recall that
$D_{\zb}\eta=\frac{\bar\mu}{2}\eta$
by the Willmore condition.
Notice that even at a zero of $\<\kappa,\kappa\>$, the limit of $\mu$ still exists according to Chern's lemma (see also \cite{Ejiri}, \cite{DoWa1}). Thus the line spanned by $\widehat{Y}$, which corresponds to a point in $S^n$, has a well-defined limit. In particular, when the limit of $\mu$ is $\infty$ at one point of $S^2$, the limit of the real line $[\widehat{Y}]$ is nothing but $[Y]$. As a conclusion, the adjoint transform $[\widehat{Y}]$ extends to a branched conformal immersion $S^2\to S^n$.
\end{proof}
From the proof of Theorem \ref{thm-uniqueness}, one observes that the adjoint surface is the dual surface of $y$ if and only if $\eta\equiv0$. We consider two cases separately depending on whether this holds true, which are treated separately in Proposition~\ref{prop-min} and Proposition~\ref{prop-not-min}.
\begin{proposition}\label{prop-min}
Let $y:S^2\to S^5$ be a full Willmore immersion and non-superconformal. When it is S-Willmore, $y$ is M\"obius equivalent to a minimal surface in $\mathbb{R}^5$.
\end{proposition}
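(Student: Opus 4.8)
The plan is to reduce the whole statement to the single scalar condition $\rho\equiv 0$, since by Remark~\ref{rem-minimal} the vanishing of $\rho$ already forces $y$ to be M\"obius equivalent to a Euclidean minimal surface. Because $y$ is S-Willmore, its unique adjoint transform is the dual surface and $\eta=D_{\zb}\kappa+\frac{\bar\mu}{2}\kappa\equiv 0$, with $\bar\mu=-\langle\kappa,\kappa\rangle_{\zb}/\langle\kappa,\kappa\rangle$ by \eqref{eq-mu2}; moreover Theorem~\ref{thm-uniqueness} guarantees that this dual is a globally defined branched conformal immersion on all of $S^2$, so that the invariant $\rho\,\abs{\d z}^2$ attached to the pair $\{y,\hat y\}$ lives on the whole sphere. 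Thus it suffices to exhibit a globally defined holomorphic object whose vanishing on $S^2$ delivers $\rho\equiv 0$.

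The key construction is the quartic differential $\rho\,\langle\kappa,\kappa\rangle\,(\d z)^4$. First I would verify it is holomorphic: the S-Willmore relation $\eta\equiv 0$ collapses the second identity in \eqref{eq-eta-zb} to $\rho_{\zb}=\bar\mu\,\rho$, while differentiating $\langle\kappa,\kappa\rangle$ and using $D_{\zb}\kappa=-\frac{\bar\mu}{2}\kappa$ gives $\langle\kappa,\kappa\rangle_{\zb}=-\bar\mu\,\langle\kappa,\kappa\rangle$; the two contributions cancel, so $\partial_{\zb}\bigl(\rho\,\langle\kappa,\kappa\rangle\bigr)=0$. A check of the conformal weights then confirms the object is a genuine holomorphic differential: under $z\mapsto w$ with $\lambda=\d z/\d w$ one has $\langle\kappa,\bar\kappa\rangle\,\abs{\d z}^2$ and $\rho\,\abs{\d z}^2$ invariant (so $\rho$ carries the factor $\lambda\bar\lambda$), while $\langle\kappa,\kappa\rangle$ carries $\lambda^3\bar\lambda^{-1}$; the anti-holomorphic factors cancel in the product and $\rho\,\langle\kappa,\kappa\rangle\,(\d z)^4$ is coordinate-independent.

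Once this is in hand, the vanishing theorem on $S^2$ completes the argument, exactly as in the superconformal case of Proposition~\ref{isotropic}: a holomorphic quartic differential on the $2$-sphere is identically zero, so $\rho\,\langle\kappa,\kappa\rangle\equiv 0$. Since $y$ is non-superconformal, $\langle\kappa,\kappa\rangle\not\equiv 0$ on an open dense subset, whence $\rho\equiv 0$ there and, by the real analyticity noted in Remark~\ref{analytic}, everywhere. Remark~\ref{rem-minimal} then gives that $y$ is M\"obius equivalent to a minimal surface in $\R^5$. I note that this argument never uses the codimension, so it reproduces independently of Ejiri the fact that a non-superconformal S-Willmore $S^2$ must be minimal.

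The hard part will be the global regularity of the differential. A priori $\mu$, and hence $\rho$, can blow up at the (isolated) zeros of $\langle\kappa,\kappa\rangle$, so $\rho\,\langle\kappa,\kappa\rangle\,(\d z)^4$ might only be meromorphic, and a meromorphic quartic differential on $S^2$ need not vanish. I would remove this difficulty using Theorem~\ref{thm-uniqueness}: Chern's lemma provides a finite limit of the dual line $[\widehat Y]$ at such points, and the induced metric $\frac12\abs{\rho}^2\abs{\d z}^2$ of the dual branched immersion has bounded conformal factor, so $\rho$ stays bounded near these zeros; a bounded holomorphic function on a punctured disc extends across the puncture, so the apparent poles are removable. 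This is precisely the singularity issue emphasized in the introduction and handled for the global map in the Appendix, and establishing the boundedness cleanly is the only genuine obstacle — the cancellation computation and the vanishing theorem are then routine.
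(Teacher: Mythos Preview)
Your overall strategy matches the paper's exactly: construct the quartic differential $\Theta_3=\rho\langle\kappa,\kappa\rangle(\d z)^4$, verify $\partial_{\zb}(\rho\langle\kappa,\kappa\rangle)=0$ from $\rho_{\zb}=\bar\mu\rho$ and $\langle\kappa,\kappa\rangle_{\zb}=-\bar\mu\langle\kappa,\kappa\rangle$, invoke the vanishing theorem on $S^2$, and conclude via Remark~\ref{rem-minimal}. That part is fine.

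The genuine gap is in your removability argument. You claim $\rho$ stays bounded at the umbilic points because $\frac12|\rho|^2|\d z|^2$ is ``the induced metric of the dual branched immersion''. But $\frac12|\rho|^2|\d z|^2$ in \eqref{yhat-metric} is computed with the specific lift $\widehat Y$ normalized by $\langle\widehat Y,Y\rangle=-1$; when $\mu\to\infty$ at an umbilic point (which the proof of Theorem~\ref{thm-uniqueness} explicitly allows), this lift blows up like $\frac12|\mu|^2Y$, so $\frac12|\rho|^2$ is \emph{not} the conformal factor of the round-sphere metric pulled back by $\hat y$. All the branched-immersion property yields is $|\rho|\lesssim|\mu|^2$, which is no bound. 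In fact $\rho$ is generically unbounded at umbilic points: with $\kappa=f\psi$ as below one finds that $f^2\rho$ is holomorphic with value $2f_z f_{\zb}$ at a zero of $f$, so $\rho\sim 1/f^2$ there. The paper's fix is different and is what you are missing: Lemma~\ref{lemma-subbundle} extends the line bundle $\mathrm{Span}\{\kappa\}$ across umbilics, so locally $\kappa=f\psi$ with $\psi$ a nowhere-zero holomorphic section and $f$ smooth; then $\bar\mu=-2f_{\zb}/f$ and a direct computation gives $\bar\mu_z\langle\kappa,\kappa\rangle=\bigl(-2(f_{\zb}f)_z+4f_{\zb}f_z\bigr)\langle\psi,\psi\rangle$, manifestly smooth. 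Thus the \emph{product} $\rho\langle\kappa,\kappa\rangle$ extends holomorphically even though $\rho$ itself does not. Replace your boundedness claim with this direct computation of the product.
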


\begin{proof}
We define $\bar\mu$ as above locally except the umbilical points. It follows from \eqref{eq-eta-zb} that $\rho_{\bar{z}}=\bar\mu\rho$. As a consequence, the $4$-form
\begin{equation}\label{eq-Theta3}
\Theta_3=\rho\<\kappa,\kappa\>(\d z)^4
\end{equation}
is well-defined and holomorphic everywhere except those umbilical points. We claim that $\Theta_3$ extends to the whole $S^2$ as a holomorphic form, i.e., those umbilical points are indeed removable singularities of this holomorphic form.

To show that, by Lemma~\ref{lemma-subbundle}, the line-bundle spanned by $\kappa$ is defined even at the zeros of $\kappa$. In a small neighborhood of any zero of $\kappa$, we can always take a non-zero holomorphic section of this line-bundle locally. Denote it by $\psi$ and write $\kappa=f\psi$. Obviously, this locally defined $f$ is smooth without any pole. It follows
\[
\bar\mu=-2\frac{f_{\bar{z}}}{f},
\]
and $\bar\mu f=-2f_{\bar{z}}$ is regular. So the singular term of $\rho\<\kappa,\kappa\>$ is
\[
\bar\mu_z\<\kappa,\kappa\>=\bar\mu_z \cdot f^2\<\psi,\psi\>
=(\bar\mu f^2)_z-2\bar\mu f f_z=-2(f_{\bar{z}}f)_z+4f_{\bar{z}}f_z.
\]
The final expression shows that poles do not occur. So $\Theta_3$ is a holomorphic form defined on $S^2$. It must vanish identically.
By the assumption that $\<\kappa,\kappa\>\ne 0$ on an open dense subset, we know $\rho\equiv 0$. So the original surface is M\"obius equivalent to a minimal surface in $\mathbb{R}^5$.
\end{proof}

\begin{proposition}\label{prop-not-min}
Let $y:S^2\to S^5$ be a full Willmore immersion which is not superconformal and not S-Willmore. Then it is an adjoint transform of a minimal surface in $\mathbb{R}^5$.
\end{proposition}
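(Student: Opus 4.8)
The plan is to reverse the roles of $y$ and its adjoint transform, exploiting the symmetry of the adjoint operation established in Theorem~\ref{thm-adjoint-dual}. Since $y$ is non-superconformal, Theorem~\ref{thm-uniqueness} gives a unique adjoint transform $\widehat{Y}:S^2\to S^5$, defined globally as a branched conformal immersion. The key datum is $\eta=D_{\zb}\kappa+\frac{\bar\mu}{2}\kappa$, which is \emph{not} identically zero precisely because $y$ is not S-Willmore (otherwise the adjoint surface would be the dual and we would be in the S-Willmore case of Proposition~\ref{prop-min}). The strategy is to show that $\widehat{Y}$ is M\"obius equivalent to a superconformal minimal surface in $\R^5$; then, since $y$ is an adjoint transform of $\widehat{Y}$ by Theorem~\ref{thm-adjoint-dual}, the conclusion follows immediately.

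First I would compute the Hopf differential $\widehat{\kappa}$ of $\widehat{Y}$. Differentiating \eqref{yhat-z2} and reducing modulo the frame of the mean curvature sphere of $\widehat{Y}$, one finds (as in the computation in Proposition~\ref{isotropic0}) that $\widehat{\kappa}$ is proportional to $\eta$, up to a scalar coming from the conformal factor $\tfrac12|\rho|^2$ in \eqref{yhat-metric}. The crucial property is the holomorphicity relation $D_{\zb}\eta=\frac{\bar\mu}{2}\eta$ from \eqref{eq-eta-zb}, which shows that $\mathrm{Span}\{\eta\}$ behaves like a holomorphic line bundle and is preserved under the relevant covariant derivative. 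I would then verify that $\widehat{Y}$ is superconformal, i.e.\ $\<\widehat{\kappa},\widehat{\kappa}\>\equiv 0$: this should follow from $\<\eta,\eta\>=0$, which is exactly the half-touching condition \eqref{eq-theta-2} imposed in the construction of the adjoint transform. Thus $\widehat{Y}$ is a superconformal Willmore surface.

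Next, having established that $\widehat{Y}$ is a full superconformal Willmore 2-sphere (fullness should be inherited, or else $\widehat{Y}$ lies in $S^4$ and $y$ would be forced into a degenerate class contradicting the hypotheses), I would simply invoke Proposition~\ref{isotropic}: any full superconformal Willmore immersion $S^2\to S^5$ is M\"obius equivalent to a superconformal minimal surface in $\R^5$. Combined with Theorem~\ref{thm-adjoint-dual}, which says $y$ is itself an adjoint transform of $\widehat{Y}$, this yields that $y$ is an adjoint transform of a superconformal minimal surface in $\R^5$, completing the proof.

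The main obstacle I anticipate is twofold. First, one must ensure all these computations and conclusions hold \emph{globally} on $S^2$ and not merely away from the branch points of $\widehat{Y}$ and the umbilic/zero locus of $\kappa$; this requires the line-bundle extension argument (Lemma~\ref{lemma-subbundle}) together with the real-analyticity from Remark~\ref{analytic} to rule out pathologies at singular points, and one must confirm $\widehat{Y}$ is not totally umbilic. Second, and more delicate, is checking that $\widehat{Y}$ remains \emph{full} in $S^5$: if $\eta$ and its derivatives spanned a lower-dimensional space, $\widehat{Y}$ would drop into a smaller sphere, and one must exclude this using the non-superconformality and fullness of $y$ by an argument dual to the one in the last paragraph of the proof of Proposition~\ref{isotropic0}. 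Once these global and fullness issues are handled, the rest is a direct application of the already-proven propositions.
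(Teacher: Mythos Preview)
Your high-level strategy---show that the unique adjoint transform $\widehat Y$ is a superconformal Willmore 2-sphere, then invoke Proposition~\ref{isotropic}---is natural, but it contains two genuine gaps that are precisely the obstacles the paper's own argument is designed to bypass.

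First, the computational claim that $\widehat\kappa$ is proportional to $\eta$ is not correct. From \eqref{yhat-z2} one finds (using $P:=Y_z+\tfrac{\mu}{2}Y$, $P_z=\tfrac{\mu}{2}P+\kappa$, and $\eta_z=D_z\eta$ since $\langle\eta,\kappa\rangle=\langle\eta,D_{\zb}\kappa\rangle=0$) that
\[
\widehat Y_{zz}\equiv \rho\kappa+2D_z\eta \pmod{\widehat Y,\widehat Y_z,P},
\]
so the isotropic Hopf differential of $\widehat Y$ involves the combination $\rho\kappa+2D_z\eta$, not $\eta$ alone. Consequently $\langle\widehat\kappa,\widehat\kappa\rangle=0$ is \emph{not} an immediate consequence of $\langle\eta,\eta\rangle=0$; in fact the paper obtains the superconformality of $\widehat Y$ only \emph{a posteriori}, as a corollary of Theorem~\ref{thm-adjoint-no} once $\widehat Y$ has already been identified with a minimal surface in $\R^5$. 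So this step of yours is circular as stated.

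Second, and more seriously, even if you could establish superconformality first, Proposition~\ref{isotropic} is stated for an \emph{immersion}, whereas $\widehat Y$ is only a branched conformal immersion. Its canonical lift, Schwarzian, Hopf differential, and the $\mu$ defining \emph{its} dual surface may all blow up at branch points, so the construction of the auxiliary $\xi$ in the proof of Proposition~\ref{isotropic} cannot be carried out globally on $S^2$ without further work. This is exactly the difficulty the introduction flags as the main obstacle to the ``iterate adjoint transforms'' program. The paper's proof avoids it by building the conformal harmonic map $\xi:S^2\to S^6_1$ directly from the \emph{smooth} data of the original immersion $y$: one takes $\xi$ to be the unit section of $V^\perp$ orthogonal to the globally defined holomorphic line bundle spanned by $\eta^\sharp=\langle\kappa,\kappa\rangle\eta$ (Lemma~\ref{lemma-subbundle}), verifies $\xi_{z\zb}\parallel\xi$ by hand, applies Proposition~\ref{prop-xi-harmonic} to get a constant lightlike vector $Y^*\perp\xi$, and then shows by the computation \eqref{eq-xi-zz}--\eqref{eq-yhat-sphere2} that the mean curvature spheres of $\widehat Y$ all pass through $[Y^*]$. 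This establishes minimality of $\widehat Y$ in $\R^5$ without ever needing $\widehat Y$ to be unbranched or invoking Proposition~\ref{isotropic}.
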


\begin{proof}
$\eta$ being not identically zero implies that $y$ is not S-Willmore.  Otherwise, the dual Willmore surface would be another adjoint transform of $y$, a contradiction with Theorem \ref{thm-uniqueness}.

We scale $\eta$ to get another isotropic section $\eta^\sharp$ without poles as below:
\[
\eta^\sharp:=\<\kappa,\kappa\>\eta=\<\kappa,\kappa\>D_{\zb}\kappa
-\<\kappa,D_{\zb}\kappa\>\kappa.
\]
It follows from the Willmore condition \eqref{willmore} and $\Theta_0=0$ that
\[
D_{\zb}\eta^\sharp=\<\kappa,\kappa\>\eta=
\<\kappa,D_{\zb}\kappa\>D_{\zb}\kappa
-\<D_{\zb}\kappa,D_{\zb}\kappa\>\kappa~\parallel~\eta^\sharp,
\]
\[
D_{\zb}D_{\zb}\eta^\sharp=-\frac{\bar{s}}{2}\eta^\sharp.
\]
Using Proposition~\ref{lemma-subbundle} once again, we know that
$\eta^\sharp$ spans an isotropic complex line-bundle on the whole $S^2$, which is also a holomorphic sub-bundle of the complex normal bundle.
$\eta$ can be regarded as a (local) section of this bundle on $M_0$.
The real and imaginary parts of all such sections span a rank-$2$ subbundle of the normal bundle, whose orthogonal complement has a well-defined global section $\xi$ on the whole $S^2$. Regarded as a frame vector, on $M_0$ it satisfies
\[
\xi~\bot~\eta,\bar\eta; ~~~\<\xi,\xi\>=1.
\]

We claim that $\xi:S^2\to S^6_1$ is a conformally harmonic map into the de-sitter space, i.e., it is a conformal minimal immersion on an open dense subset of $S^2$.

To show this, we still use $\eta,$ $\bar\eta$, $\xi$ as a local frame of the complex normal bundle. It follows from the Ricci equation \eqref{ricci} and $\<\xi,\eta\>=0$ that
\begin{equation}\label{eq-xi-z}
\xi_z=D_z\xi-2\<\xi,\kappa\>(Y_{\zb}+\frac{\bar\mu}{2}Y).
\end{equation}
Note that
\[
\<D_z\xi,\xi\>=0,~
\<D_z\xi,\bar\eta\>=-\<\xi,D_z\bar\eta\>
=-\<\xi,\frac{\mu}{2}\bar\eta\>=0,~~\Rightarrow~
D_z\xi\parallel \bar\eta.
\]
Hence the equation \eqref{eq-xi-z} can be re-written as
\begin{equation}\label{eq-xi-z2}
\xi_z=\lambda\bar\eta-2\<\xi,\kappa\>
(Y_{\zb}+\frac{\bar\mu}{2}Y)
\end{equation}
for some complex function $\lambda$ locally. As a consequence, \[
\xi\bot Y,Y_{\bar{z}},\hat{Y},\bar\kappa,\bar\eta;
~~~\<\xi_z,\xi_z\>\equiv 0.
\]
It follows that $\xi$ is a conformal mapping to $S^6_1$.

To show that $\xi$ is a minimal surface, we verify $\xi_{z\zb}\parallel \xi$. Since
\[
\xi^\bot=\mathrm{Span}_{\C}\{Y,Y_z,Y_{\zb}, N,\eta,\bar\eta\}
=\mathrm{Span}_{\C}\{Y,Y_z,Y_{\zb}, \widehat{Y},\xi_z,\xi_{\zb}\},
\]
and $\xi_{z\zb}$ is real vector-valued, one needs only to verify
$\xi_{z\zb}~~\bot ~~Y,Y_z,\widehat{Y},\xi_z$ as below:
\eqref{yhat-z2}:
\[\<\xi_{z\zb},\xi_z\>=\frac{1}{2}\<\xi_z,\xi_z\>_{\zb}=0;\]
\[\<\xi_{z\zb},Y\>=-\<\xi_z,Y_{\zb}\>=0;\]
\[\<\xi_{z\zb},Y_{\zb}\>=-\<\xi_z,Y_{\zb\zb}\>
=-\<\xi_z,\bar\kappa-\frac{\bar{s}}{2}Y\>=0;\]
\[\<\xi_{z\zb},\widehat{Y}\>=-\<\xi_z,\widehat{Y}_{\zb}\>=0.\]
Thus we have proved the claim. The conclusion of Proposition~\ref{prop-xi-harmonic} applies to branched conformal minimal surface $\xi:S^2\to S^6_1$. In particular the mapping $\xi$ is orthogonal to a constant light-like vector $Y^*$.

 To relate $\xi$ with some adjoint transform, we compute
\[\xi_{zz}
\mod \{\bar\eta, Y_{\bar{z}}+\frac{\bar\mu}{2}Y\},\]
which should be isotropic:
\begin{align}
\xi_{zz}&= -2\<D_z\xi,\kappa\>Y_{\bar{z}}
+2\<D_z\xi,D_{\bar{z}}\kappa\>Y\notag \\
&~~~~~~-2\<\xi,\kappa\>
\left(\frac{1}{2}N-\<\kappa,\bar\kappa\>Y
+\frac{\bar\mu}{2}Y_z+\frac{\bar\mu_z}{2}Y\right)
\qquad(\!\!\!\!\mod~~\bar\eta,Y_{\bar{z}}+\frac{\bar\mu}{2}Y~)\notag\\
&= 2\left(\<D_z\xi,\eta\>-\<\xi,\kappa\>\rho\right)Y
-\<\xi,\kappa\>\widehat{Y} \qquad\qquad\qquad~~~(\!\!\!\!\mod~~\bar\eta,Y_{\bar{z}}+\frac{\bar\mu}{2}Y~)\notag\\
&= -2\<\xi,D_z\eta+\frac{\rho}{2}\kappa\>Y
-\<\xi,\kappa\>\widehat{Y}+(\cdots)\bar\eta
+(\cdots)\left(Y_{\bar{z}}+\frac{\bar\mu}{2}Y\right).  \label{eq-xi-zz}
\end{align}
Note that $\langle \xi,\kappa\rangle$ is non-zero on an open dense subset (otherwise, suppose $\langle \xi,\kappa\rangle=0$ on an open subset, from $\kappa\bot \xi,\eta$ we deduce $\kappa\parallel\eta$, which contradicts with the fact that $\<\kappa,\kappa\>$ is not identically zero on any open subset).
Since $\xi_{zz}$ is isotropic, the coefficient of $Y$ in the expression above must vanish. In particular,
\begin{equation}\label{eq-xi-zz2}
\xi_{zz}\in \mathrm{Span}\{\bar\eta,Y_{\bar{z}}+\frac{\bar\mu}{2}Y, \widehat{Y}\},~~~\Rightarrow~~
0=\<\xi_{zz},\widehat{Y}\>=-\<\xi_z,\widehat{Y}_z\>.
\end{equation}

With these preparations, now we are able to discuss the geometry of
the adjoint transform $\hat{Y}$. First, the mean curvature sphere of
$\hat{Y}$ is given by
\begin{equation}\label{eq-yhat-sphere2}
\mathrm{Span}\{\hat{Y},\hat{Y}_z,\hat{Y}_{\zb},\hat{Y}_{z\zb}\}
~~\bot~~\{\xi,\xi_z,\xi_{\zb}\}.
\end{equation}
It is clear $\hat{Y}\bot ~\xi,\xi_z,\xi_{\zb}.$
By \eqref{yhat-z2} and \eqref{eq-xi-zz2} it follows $\hat{Y}_z\bot~ \xi,\xi_{\zb},\xi_z$. Finally, based on the fact $\xi_{z\zb}\parallel \xi$,
the following orthogonality conditions hold:
\[
\<\hat{Y}_{z\zb},\xi\>=-\<\hat{Y}_{\zb},\xi_z\>=-\<\hat{Y},\xi_{z\zb}\>=0,
\]
\[
\<\hat{Y}_{z\zb},\xi_z\>=-\<\hat{Y}_z,\xi_{z\zb}\>=0.
\]

As a consequence,
the fixed light-like vector $Y^*$, which is also orthogonal to the frames $\{\xi,\xi_z,\xi_{\zb}\}$, must be contained in the subspace
\[
\mathrm{Span}\{\hat{Y},\hat{Y}_z,\hat{Y}_{\zb},\hat{Y}_{z\zb}\}
\]
on an open dense subset of $S^2$. In other words, the mean curvature spheres of $[\widehat{Y}]$ pass through a fixed point $[Y^*]$. Taking this $[Y^*]$ as the point at infinity, $[\widehat{Y}]$ is (M\"obius equivalent to) a minimal surface in an affine $\mathbb{R}^5$.
The superconformality of $\hat{Y}$ is a corollary of (2) and (3) of Theorem \ref{thm-adjoint-no}.
The original Willmore 2-sphere $[Y]$ is an adjoint transform of this minimal surface by Theorem \ref{thm-adjoint-dual}.
This completes the proof.
\end{proof}

\section{Adjoint transforms of superconformal minimal surfaces in $\R^n$}

This section aims to derive all adjoint transforms of superconformal minimal surfaces in $\R^n$.
To this end, we will take a conformal complex coordinate $z$ and restrict to consider only \emph{local} theory. Since for $M^2=S^2=\mathbb{C}\cup\{\infty\}$ there always exists a local complex coordinate $z$, it is easy to see  how these discussions fit into \emph{global} context.

For convenience, we use dot product to denote the Euclidean inner product in $\R^n$, and $\<~,~\>$ to denote the Lorentz inner product of $\R^{n+2}_1$. They are extended to $\mathbb{C}$-bilinear products automatically when complex vectors are involved.

Consider a conformally immersed minimal surface $x:M^2\to \R^n$.
The map $x:M^2\to \R^n$ satisfies
\[
x_z\cdot x_z=0,~~x_{z\zb}=0,~~x_z\cdot x_{\zb}=\frac{1}{2}e^{2w}.
\]
Its classical normal valued Hopf differential is given by
\begin{equation}\label{eq-Q1}
Q\triangleq x_{zz}-2 w_z x_z
=x_{zz}-\frac{x_{zz}\cdot x_{\zb}}{x_z\cdot x_{\zb}} x_z.
\end{equation}
The Gauss equation and Codazzi equation are
\begin{equation}\label{eq-Q2}
e^{-2w}|Q|^2=w_{z\zb},~~D_{\zb} Q=0.
\end{equation}

Taking the inverse stereographic projection and then lifting to the lightcone, the canonical lift $X$ of $x$ has the form
\begin{equation}\label{eq-x}
X=e^{-w}\left(x,\frac{-1+x\cdot x}{2},\frac{-1-x\cdot x}{2}\right).
\end{equation}
It is easy to check that
\begin{equation}\label{eq-x1}
X_z=-w_z X+e^{-w}(x_z,x_z\cdot x,-x_z\cdot x),
\end{equation}
and $\<X_z,X_{\bar{z}}\>=\frac{1}{2}$.
Direct computation also yields
\begin{align}\label{eq-x2}
X_{z\bar{z}}&=(-w_{z\bar{z}}-w_z w_{\bar{z}})X
-w_z X_{\bar{z}}-w_{\bar{z}} X_z +\frac{e^w}{2}(0,1,-1),\\
X_{zz} &=(-w_{zz}+(w_z)^2)X +e^{-w}(Q,Q\cdot x,-Q\cdot x).
\end{align}
Comparing with \eqref{mov-eq} we obtain
\begin{equation}\label{eq-x3}
s=-2w_{zz}+2(w_z)^2,~~
\kappa=e^{-w}(Q,Q\cdot x,-Q\cdot x)
\end{equation}
and
\begin{equation}\label{eq-x4}
\frac{1}{2}N=X_{z\bar{z}}+\<\kappa,\bar\kappa\>X=
(e^{-2w}|Q|^2-w_{z\bar{z}}-w_z w_{\bar{z}})X
-w_z X_{\bar{z}}-w_{\bar{z}} X_z +\frac{e^w}{2}(\vec{0},1,-1).
\end{equation}
In particular, by the Codazzi equation (the second in \eqref{eq-Q2}) we have
\begin{equation}\label{eq-x5}
D_{\zb}\kappa=-\frac{\overline{\mu^*}}{2}\kappa, ~~~\text{where}~
\mu^*=2w_z.
\end{equation}
This confirms the fact that a Euclidean minimal surface is S-Willmore, whose dual surface $[X^*]$ degenerates to the single point $[(\vec{0},1,-1)]$, where
\begin{equation}\label{eq-x6}
X^*=N+\overline{\mu^*} X_z +\mu^* X_{\zb}+\frac{\abs{\mu^*}^2}{2} X=e^w(\vec{0},1,-1).
\end{equation}

\begin{remark}
Note that this factor $\mu^*$ is still defined at an umbilic point (the zero of $\kappa$) of a minimal surface in $\R^n$ (even for those in $S^n$ or $H^n$). For umbilics of generic S-Willmore surfaces this property may not hold true.
\end{remark}

From now on we assume that the original minimal surface $x$ is super-conformal, i.e.,
\[
0=x_{zz}\cdot x_{zz}=Q\cdot Q=\<\kappa,\kappa\>.
\]
As pointed out in Theorem~\ref{thm-adjoint-no}, in this situation, an adjoint transform of $x$ corresponds to a solution $\mu$ of equation~\eqref{eq-theta}
\[
\mu_z-\frac{1}{2}\mu^2-s=0,
\]
where the coefficient function $s$ is the previously defined Schwarzian locally given by \eqref{eq-x3}.
Such a Riccati equation is well-known to be related with another second order linear ordinary differential equation on the unknown $\zeta$:
\begin{equation}\label{eq-mu1}
   \zeta_{zz}=-\frac{s}{2}\zeta.
\end{equation}
The correspondence between solutions to these two equations is given by
\begin{equation}\label{eq-mu2}
\mu=\frac{-2\zeta_z}{\zeta}.
\end{equation}
That means we can find a solution $\mu$ to the first equation~\eqref{eq-theta} from a solution $\zeta$ of the second equation~\eqref{eq-mu1} using the formula~\eqref{eq-mu2}; conversely, any solution $\mu$ to \eqref{eq-theta} is constructed in this way. Thus the problem is reduced to solve \eqref{eq-mu1}.

Note that there is already a special solution $\zeta^*=e^{-w}$ to the equation~\eqref{eq-mu1}, corresponding to the known solution $\mu^*=2w_z$ for \eqref{eq-theta}.

As pointed out in \cite{ma2}, we can find all general solutions $\zeta$ to the equation~\eqref{eq-mu1} if a special solution $\zeta^*$ is known. Write $\zeta=\lambda\zeta^*$, we need only to find $\lambda$ as the solution to a $\partial$-problem \cite{ma2}:
\[
\lambda_z=(\zeta^*)^{-2}.
\]
Since $(\zeta^*)^{-2}=e^{2w}=2x_z\cdot x_{\zb}$ and $x_{z\zb}=0$, any solution $\lambda$ is the following form:
\[
\lambda=2 (x\cdot x_{\zb}+\bar{g}),~~~\text{where $g$ is any holomorphic function}.
\]
Thus in terms of this auxiliary holomorphic function $g$, a general adjoint transform $[\hat{X}]$ of $x$ under a local coordinate $z$ is given by
\begin{align}
\zeta &=\lambda\zeta^*=2(x\cdot x_{\zb}+\bar{g})e^{-w};\\
\mu &=\frac{-2\zeta_z}{\zeta}
=\frac{-2x_z\cdot x_{\zb}}{x\cdot x_{\zb}+\bar{g}}+2w_z
=\frac{-e^{2w}}{x\cdot x_{\zb}+\bar{g}}+\mu^*;\\
\hat{X}&=N+\bar\mu X_z +\mu X_{\zb}+\frac{1}{2}\abs{\mu}^2 X \notag,\\
e^{-w}\hat{X}^\top&=
\begin{pmatrix}\vec{0} \\ ~1\\ -1\end{pmatrix}
  -\frac{1}{x\!\cdot\! x_z\!+\!g}
\begin{pmatrix}x_z \\~x\!\cdot\! x_z \\-x\!\cdot\! x_z\end{pmatrix}
  -\frac{1}{x\!\cdot\! x_{\zb}\!+\!\bar{g}}
\begin{pmatrix}x_{\zb} \\~x\!\cdot\! x_{\zb} \\-x\!\cdot\! x_{\zb}\end{pmatrix}
  +\frac{x_z\!\cdot\! x_{\zb}}{\abs{x\!\cdot\! x_z\!+\!g}^2}
\begin{pmatrix}
x \\ \frac{-1+x\cdot x}{2}\\ \frac{-1-x\cdot x}{2}\end{pmatrix}.
\label{eq-xhat0}
\end{align}
In the final step we used \eqref{eq-x},\eqref{eq-x1}, \eqref{eq-x6} and $\mu^*=2w_z$. Then it is crucial to notice
\[
\hat{X}
=e^w\frac{x_z\!\cdot\! x_{\zb}}{\abs{x\cdot x_z+g}^2}
\left(\hat{x},\frac{-1+\hat{x}\cdot\hat{x}}{2},
\frac{-1-\hat{x}\cdot\hat{x}}{2}\right)
\]
with
\begin{equation}\label{eq-xhat1}
   \hat{x}=x
   -\frac{x\cdot x_{\zb}+\bar{g}}{x_z\cdot x_{\zb}}x_z
   -\frac{x\cdot x_z+g}{x_z\cdot x_{\zb}}x_{\zb}.
\end{equation}
One immediately recognizes that after taking stereographic projection back to the original $\R^n\supset x(M^2)$, the adjoint transform $[\hat{X}]$ is represented by $\hat{x}$ in the same affine space.

In summary, we have obtained
\begin{theorem}
Any adjoint transform $\hat{x}$ of a super-conformal minimal surface $x:M^2\to\R^n$ is given in \eqref{eq-xhat1} for some meromorphic $g$. Conversely, any $\hat{x}$ given in \eqref{eq-xhat1} for some meromorphic $g$ is also an adjoint surface of $x$.
\end{theorem}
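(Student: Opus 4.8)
My plan is to organize the computation already assembled above into the two asserted implications, and then to address the one genuinely global issue: the upgrade from a locally holomorphic $g$ to a globally meromorphic one. For the forward direction I would start from Theorem~\ref{thm-adjoint-no}(1). Since $x$ is superconformal, $\langle\kappa,\kappa\rangle\equiv0$ forces $\langle\kappa,D_{\bar z}\kappa\rangle=\langle D_{\bar z}\kappa,D_{\bar z}\kappa\rangle=0$, so the conformal condition \eqref{eq-theta-2} is vacuous and \emph{every} solution $\mu$ of the Riccati equation \eqref{eq-theta} yields an adjoint transform. Classifying adjoint transforms is therefore the same as solving \eqref{eq-theta}. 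The logarithmic substitution $\mu=-2\zeta_z/\zeta$ converts this into the linear equation \eqref{eq-mu1}, so it suffices to find all $\zeta$. The canonical lift supplies the distinguished solution $\zeta^*=e^{-w}$ (which corresponds to $\mu^*=2w_z$, the degenerate dual), and reduction of order, together with the scaling invariance of $\mu$ under $\zeta\mapsto c\zeta$, reduces the problem to the $\partial$-problem $\lambda_z=(\zeta^*)^{-2}=e^{2w}$ for $\zeta=\lambda\zeta^*$. Using $x_{z\bar z}=0$ one verifies that $2\,x\cdot x_{\bar z}$ is a particular solution and that the remaining ambiguity is an arbitrary anti-holomorphic function, giving $\lambda=2(x\cdot x_{\bar z}+\bar g)$ with $g$ holomorphic. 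This exhibits every adjoint transform and shows that the family is parametrized by the single holomorphic function $g$.

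The second task is the explicit simplification. I would substitute the resulting $\mu$ into the light-cone formula \eqref{yhat2} for $\widehat X$ and expand using the lift \eqref{eq-x}, its $z$-derivative \eqref{eq-x1}, and the expression \eqref{eq-x6} for $X^*$, arriving at \eqref{eq-xhat0}. The decisive move is to recognize the common scalar factor $e^{w}(x_z\cdot x_{\bar z})/|x\cdot x_z+g|^2$, so that $\widehat X$ acquires the canonical lightcone form of a single point; stripping this factor and reading off the $\R^n$-component yields precisely the $\hat x$ of \eqref{eq-xhat1}. The payoff is conceptual as well as computational, since it shows that $[\widehat X]$ lies, after stereographic projection, in the \emph{same} affine $\R^n$ as $x$.

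For the converse I would simply reverse these steps. Given any meromorphic $g$, the function $\lambda=2(x\cdot x_{\bar z}+\bar g)$ solves $\lambda_z=e^{2w}$ away from its poles, so $\zeta=\lambda\zeta^*$ solves \eqref{eq-mu1} and $\mu=-2\zeta_z/\zeta$ solves the Riccati equation \eqref{eq-theta}; by Theorem~\ref{thm-adjoint-no}(1) the associated $[\widehat X]$ is an adjoint surface of $x$, and the identical algebra that produced \eqref{eq-xhat0} identifies it with the $\hat x$ defined by \eqref{eq-xhat1}.

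The main obstacle is not any single calculation but the local-to-global bookkeeping that justifies the word \emph{meromorphic}. On a simply connected coordinate patch avoiding umbilics the construction delivers a holomorphic $g$; to assemble a single well-defined map on all of $M^2$ — in particular on $S^2$, where nonconstant holomorphic functions do not exist — one is forced to allow $g$ to have poles. Here I would note that the two correction terms in \eqref{eq-xhat1} are complex conjugates of one another, so $\hat x$ is real; at a pole of $g$ both terms blow up, and one checks that $\hat x\to\infty$, i.e.\ $[\widehat X]$ tends to the point at infinity of $\R^n$, which is a perfectly regular point of $S^n$. Verifying that $[\widehat X]$ extends continuously across such poles to this point — so that \eqref{eq-xhat1} really defines a branched conformal map on the whole surface, as in the extension argument used in Theorem~\ref{thm-uniqueness} — is the one point that needs care beyond the formal computation.
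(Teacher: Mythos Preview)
Your proposal is correct and follows essentially the same route as the paper: the theorem is stated there as the summary of the preceding computation in Section~6, which proceeds exactly as you outline---reduce to the Riccati equation via Theorem~\ref{thm-adjoint-no}(1), linearize to \eqref{eq-mu1} via $\mu=-2\zeta_z/\zeta$, use the distinguished solution $\zeta^*=e^{-w}$ and reduction of order to solve $\lambda_z=e^{2w}$, and then substitute into \eqref{yhat2} to obtain \eqref{eq-xhat0} and hence \eqref{eq-xhat1}. One small sharpening: the paper notes in the remark following the theorem that globally $g\,dz$ is the invariant object (a meromorphic $1$-form), recoverable as $(x-\hat x)\cdot x_z\,dz$, which cleanly handles the coordinate-patch bookkeeping you raise.
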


\begin{remark}
Note that in \eqref{eq-xhat1}, $g$ is allowed to have poles. In particular, the dual surface can be recovered by taking $g\equiv \infty$ (which means that this holomorphic mapping to $\mathbb{C}P^1$ degenerates to one point). From the global viewpoint, in general we should regard this $g$ in \eqref{eq-xhat1} as a meromorphic 1-form on $M^2$. Conversely,
from a super-conformal Euclidean minimal surface $x$ and its adjoint surface $\hat{x}$ we can find out this 1-form given by
\begin{equation}
g dz=(x-\hat{x})\cdot x_z dz.
\end{equation}
\end{remark}

\begin{remark}
One can check that the surface given by \eqref{eq-xhat1} agrees with the original geometric characterization of an adjoint transform (Section~3.2). First, at any point $p\in M^2$, \eqref{eq-xhat1} implies that $\hat{x}(p)$ is contained in the tangent plane $T_p x(M^2)$ which is exactly the mean curvature sphere of the minimal surface $x(M^2)$ at $p$. Next, differentiating \eqref{eq-xhat1} and simplifying the result, we get
\begin{equation}\label{eq-xhatz1}
\hat{x}_z
=-\frac{x_{\zb}\cdot x+\bar{g}}{x_z\cdot x_{\zb}}
\left(x_{zz}-\frac{x_{zz}\cdot x_{\zb}}{x_z\cdot x_{\zb}}x_z\right)
+(\cdots)x_{\zb}.
\end{equation}
This implies $\<\hat{x}_z,\hat{x}_z\>=0$; hence $\hat{x}$ is also a conformal map from $M^2$. Finally, by \eqref{eq-xhatz1} we know $\<\hat{x}_z,x_{\zb}\>=0$, which verifies the co-touching property (see Definition~3.1 in \cite{ma2}). So $\hat{x}$ and $x$ satisfy the characterization of a pair of adjoint surfaces.
\end{remark}

\section{The pedal surface and branch points}
One special adjoint transform is given by taking $g=0$ identically in \eqref{eq-xhat1}, i.e.,
\begin{equation}\label{eq-xhat2}
   \hat{x}=x
   -\frac{x\cdot x_{\zb}}{x_z\cdot x_{\zb}}x_z
   -\frac{x\cdot x_z}{x_z\cdot x_{\zb}}x_{\zb}.
\end{equation}
This is exactly the classical construction of \emph{pedal surface}, i.e., for any $p\in M^2$, $\hat{x}(p)$ is exactly the foot of perpendicular from the origin $\vec{0}$ to the tangent plane $T_p x(M^2)$.

More generally, we can take any fixed point $x_0\in \R^n$ and consider the holomorphic function $g=-2x_0\cdot x_z$ (which can be viewed as a combination of coordinate functions of the ambient space).
Then the corresponding $\hat{x}$ is the pedal surface of $x$ with respect to this fixed $x_0$ (called the \emph{pedal point}). In summary, we have proved
\begin{theorem}
The pedal surfaces $\hat{x}$ of a super-conformal Euclidean minimal surface $x:M^2\to\R^n$ are a family of adjoint transforms of $x$, depending on the choice of the pedal point $x_0$, i.e., $n$ real parameters.
\end{theorem}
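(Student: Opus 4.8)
The plan is to reduce the statement to the characterization of adjoint transforms already obtained. By the theorem at the end of Section~6, every map of the form \eqref{eq-xhat1} with $g$ meromorphic is an adjoint transform of $x$, and conversely. Hence it suffices to exhibit the pedal surface with pedal point $x_0$ as an instance of \eqref{eq-xhat1} for a suitable \emph{holomorphic} $g$, and then to count parameters. The candidate is $g=-x_0\cdot x_z$, the precise constant being dictated by the orthogonality computation below.

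First I would check that this $g$ is holomorphic, so that \eqref{eq-xhat1} genuinely applies. Since $x$ is minimal we have $x_{z\zb}=0$, hence $\partial_{\zb}x_z=0$ and each component of $x_z$ is holomorphic. For a fixed $x_0\in\R^n$ the function $g=-x_0\cdot x_z$ is a $\C$-linear combination of these holomorphic components, so it is holomorphic. Substituting it into \eqref{eq-xhat1} therefore yields an honest adjoint transform of $x$ by the cited theorem, which settles the ``adjoint transform'' half of the statement with no further work.

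The main step is the geometric identification of this $\hat x$ as the pedal surface with pedal point $x_0$, i.e. as the foot of the perpendicular dropped from $x_0$ onto the affine tangent plane $T_px=x+\mathrm{span}_{\R}\{x_z,x_{\zb}\}$. Two things must be verified at each $p$. The first, that $\hat x-x\in\mathrm{span}\{x_z,x_{\zb}\}$ so that $\hat x$ lies in this plane, is immediate from the shape of \eqref{eq-xhat1}. The second is the orthogonality $(\hat x-x_0)\perp x_z$ and $(\hat x-x_0)\perp x_{\zb}$. Pairing $\hat x-x_0$ with $x_z$ and using the conformality relations $x_z\cdot x_z=0$ and $x_{\zb}\cdot x_{\zb}=0$, the term carrying $x_z\cdot x_z$ drops out and the remainder simplifies to $(\hat x-x_0)\cdot x_z=-x_0\cdot x_z-g$; pairing with $x_{\zb}$ and using $\bar g=-x_0\cdot x_{\zb}$ (valid since $x_0$ is real) gives $(\hat x-x_0)\cdot x_{\zb}=-x_0\cdot x_{\zb}-\bar g$. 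Both vanish precisely for $g=-x_0\cdot x_z$, which simultaneously fixes the constant and confirms the foot-of-perpendicular property. Setting $x_0=\vec{0}$ recovers $g=0$ and the classical pedal surface \eqref{eq-xhat2} with respect to the origin.

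Finally I would address the parameter count. As $x_0$ ranges over $\R^n$ we obtain an $n$-parameter family, and these parameters are genuinely independent when $x$ is full: if two pedal points $x_0,x_0'$ produce the same $g$, then $(x_0-x_0')\cdot x_z\equiv0$, and together with the conjugate relation this forces $(x_0-x_0')\cdot\d x\equiv0$, so $x$ would lie in an affine hyperplane, contradicting fullness; hence $x_0=x_0'$. The only genuine content beyond invoking the Section~6 theorem is thus the short orthogonality computation, so the main (and rather mild) obstacle is bookkeeping the conformality cancellations carefully enough to read off that the chosen $g$ is exactly the one making $\hat x-x_0$ normal to the surface.
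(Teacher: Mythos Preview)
Your approach is essentially the same as the paper's: both identify the pedal surface with pedal point $x_0$ as the instance of \eqref{eq-xhat1} for the holomorphic choice $g=-x_0\cdot x_z$ and then invoke the Section~6 classification of adjoint transforms. Your constant is in fact the correct one (the paper writes $g=-2x_0\cdot x_z$, a slip of a factor of $2$), and you add a fullness-based injectivity argument for the $n$-parameter count that the paper leaves implicit.
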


 For the purpose of constructing immersed examples, it is important to answer the following question:  \\

{\em For a pedal surface given in \eqref{eq-xhat2}, when will it be immersed without any  branch point?}\\

 \eqref{eq-Q1} and \eqref{eq-xhatz1} tell us the information of $\hat{x}_z$, which takes a simple form as below when $g=0$:
\begin{equation}\label{eq-xhatz2}
\hat{x}_z
=-\frac{x_{\zb}\cdot x}{x_z\cdot x_{\zb}}Q
-\frac{Q\cdot x}{x_z\cdot x_{\zb}}x_{\zb},~~~
\text{with}~Q=x_{zz}-\frac{x_{zz}\cdot x_{\zb}}{x_z\cdot x_{\zb}}x_z~\text{being the Hopf differential}.
\end{equation}
As a consequence, when $Q=0$, $\hat{x}$ has a branch point.

In the complement of those umbilic points, since $x_z\ne 0$ by the assumption that $x$ is an immersion, $\hat{x}_z=0$ if, and only if, $x_{zz}\cdot x=x_z\cdot x=0$ at one point.
In general, for a pedal surface constructed using another fixed point $x_0$, it has a branch point if, and only if,
\begin{equation}\label{eq-branch}
x_{zz}\cdot (x-x_0)=x_z\cdot (x-x_0)=0,
\end{equation}
at some point.

Now we assert that one can always remove such branch points by a re-choice of the origin (i.e., the fixed point $x_0$ appearing in the construction of the pedal surface).
At any point $p\in M^2$, by the assumption of immersion and no umbilic points, the real and imaginary parts of $\{x_{zz}(p),x_z(p)\}$ span a 4-dimensional real subspace of $\R^n$.
Thus the solutions $x_0$ to the systems \eqref{eq-branch} form a $(n-4)$-dimensional affine subspace.
When $p$ is taken all over $M^2$, such points will form a subset
\[
\digamma=\bigcup_{p\in M}\{\mathbf{v}\in\R^n|x(p)-\mathbf{v}\perp x_z(p),x_{zz}(p)\}
\]
of $\R^n$ with dimension no greater than $n-2$. Thus if we choose $x_0$ in $\R^n\setminus \digamma$, which is an open dense subset of $\R^n$, the corresponding adjoint surface will have no branch points on $M^2$ (except those umbilic points of $x(M)$). In summary we have proved
\begin{proposition}
A generic adjoint surface $\hat{x}$ of a super-conformal minimal surface $x$ immersed in $\R^n$ has no branch points (except at the umbilic points and the ends of the first surface $x$).
\end{proposition}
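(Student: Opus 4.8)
The plan is to establish the statement by a dimension-counting argument on the pedal family, supported by a short piece of linear algebra that measures how many directions the first two derivatives of a superconformal minimal immersion occupy. Recall from \eqref{eq-branch} that the pedal surface built from a pedal point $x_0$ fails to be an immersion at $p\in M^2$ precisely when
\[
x_{zz}(p)\cdot(x(p)-x_0)=x_z(p)\cdot(x(p)-x_0)=0 ,
\]
equivalently, when the real vector $x(p)-x_0$ is orthogonal to the real and imaginary parts of both $x_z(p)$ and $x_{zz}(p)$. So the task reduces to showing that for a generic choice of $x_0$ this system admits no solution on $M^2$ away from the umbilic points and the ends of $x$.

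The key geometric input I would isolate first is that, away from umbilics, the real subspace $W_p:=\mathrm{span}_{\R}\{\Re x_z(p),\Im x_z(p),\Re x_{zz}(p),\Im x_{zz}(p)\}$ is exactly four-dimensional. To prove it I would use the Hopf differential $Q=x_{zz}-2w_z x_z$ of \eqref{eq-Q1}: one checks directly that $Q\cdot x_z=Q\cdot x_{\zb}=0$, while superconformality gives $Q\cdot Q=0$ and $Q\cdot\bar Q=|Q|^2\neq0$ off the umbilics. Hence $\Re x_z,\Im x_z$ span the tangent plane and $\Re Q,\Im Q$ span a two-dimensional subspace of the normal space orthogonal to it; since $\mathrm{span}_{\C}\{x_{zz},x_z\}=\mathrm{span}_{\C}\{Q,x_z\}$, the four real vectors are independent, so $\dim_{\R}W_p=4$. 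This is the step where the hypothesis $n\ge4$ enters essentially.

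With this in hand the count is immediate. For a fixed $p$ the bad pedal points form the affine subspace $x(p)+W_p^{\perp}$ of dimension $n-4$. Letting $p$ vary over the two-dimensional surface, the total bad set
\[
\digamma=\bigcup_{p\in M}\left(x(p)+W_p^{\perp}\right)
\]
is the image of a smooth map from an $(n-2)$-dimensional total space (coordinates on $M$ together with an $(n-4)$-dimensional fibre) into $\R^n$. Therefore $\digamma$ has Hausdorff dimension at most $n-2$ and, in particular, Lebesgue measure zero in $\R^n$; its complement is dense, and any $x_0\in\R^n\setminus\digamma$ yields a pedal, hence adjoint, surface $\hat x$ immersed on all of $M^2$ outside the umbilics.

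The step I expect to be the main obstacle is the global bookkeeping rather than the local algebra. Because a complete genus-zero minimal surface is $M=S^2$ with finitely many ends removed, the index set of the union defining $\digamma$ is non-compact, so I must confirm that the image still has measure zero (a Sard-type statement for images of lower-dimensional manifolds suffices) and, if one wants the good set to be \emph{open} and not merely dense, restrict to a compact exhaustion of $M$ and handle the end neighbourhoods separately; this is exactly the reason the conclusion excepts the ends of $x$. A minor secondary point is that the umbilics, where $\dim W_p$ drops, are the isolated zeros of the holomorphic $Q$ and hence form a set of measure zero in $M$ that is already excluded, so they do not affect the dimension of $\digamma$.
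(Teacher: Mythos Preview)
Your argument is correct and follows essentially the same route as the paper: identify the branch condition \eqref{eq-branch}, observe that $\mathrm{span}_{\R}\{\Re x_z,\Im x_z,\Re x_{zz},\Im x_{zz}\}$ is $4$-dimensional away from umbilics, and deduce that the bad pedal points form a set $\digamma$ of dimension at most $n-2$. You in fact supply more detail than the paper does---your justification of $\dim_{\R}W_p=4$ via the orthogonality relations of $Q$, and your remarks on Sard-type measure-zero arguments and on openness versus density, are points the paper simply asserts or passes over.
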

When the codimension is bigger than $1$, generally it is easy to remove the known umbilic points by some deformations. Since
our aim is to produce immersed examples of Willmore 2-spheres using the construction of pedal surfaces as above, we turn to the final possible source of branch points: the \emph{ends} of $x$.

Assume that $M^2=D^2\setminus \{0\}$ with $z=0$ being the end of $x$.  Although $x(M^2)$  may not be able to be extended smoothly to this end when $x$ is viewed as a surface in $S^n$,  for the pedal surface $\hat{x}$ this still stays possible.
To control the behavior of $\hat{x}$ at the end of $x$ so that $\hat{x}$ has not only a smooth limit, but also is immersed when viewed as surfaces in $S^n$, we need to analyze the Laurent expansion of $x$ and $\hat{x}$ at $z=0$.

\begin{lemma}\label{lem-min-ends}
Let $x:D^2\setminus \{0\}\to \R^n$ be a super-conformal algebraic minimal surface and a full immersion of the punctured disk in $\R^n$ with $n\geq 5$. Suppose $x_z dz$ has no residue at the end $z=0$. Set
\begin{equation}\label{eq-laurent1}
x=2Re\left(\vec{v}_{-m}z^{-m}+\vec{v}_{k-m}z^{k-m}
+\sum_{j>k-m}^{+\infty}\vec{v}_jz^j\right),
\end{equation}
where $m,k$ are positive integers, and the coefficient vectors $\vec{v}_{-m}, \vec{v}_{k-m}$ are assumed to be $\C$-linear independent.
Then we have
\begin{equation}\label{eq-laurent2}
\hat{x}=2Re\left(\frac{k}{m}\Big(\vec{v}_{k-m}
-\frac{\bar{\vec{v}}_{-m}\cdot \vec{v}_{k-m}}
{|\vec{v}_{-m}|^2}\vec{v}_{-m}\Big)z^{k-m}+o(|z|^{k-m})\right)
\end{equation}
for the pedal surface $\hat{x}$ defined in \eqref{eq-xhat2}.

As a corollary, when $k-m<0$, $\hat{x}$ is immersed at $z=0$ (after an inversion in $\R^n$) if only if
$k-m=-1$. When $k-m>0$, $\hat{x}$ is immersed at $z=0$ if and only if $k-m=1$. (In the case $k-m=0$, the effect of adding the vector $\vec{v}_0$ is adding a constant plus some quantity in the order $o(\frac{1}{|z|})$, which can be ignored; then it is reduced to the case $k-m>0$.)
\end{lemma}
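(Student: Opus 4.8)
The plan is to substitute the Laurent expansion \eqref{eq-laurent1} directly into the pedal formula \eqref{eq-xhat2} and read off the leading term. Writing $x=\phi+\bar\phi$ for the holomorphic data $\phi=\vec{v}_{-m}z^{-m}+\vec{v}_{k-m}z^{k-m}+\cdots$, we have $x_z=\phi'$, $x_{\zb}=\overline{\phi'}$, and $x_z\cdot x_{\zb}=|\phi'|^2$ is real and positive. Since $x$ is real and $|\phi'|^2$ is real, the last two terms of \eqref{eq-xhat2} are complex conjugates, so that
\[
\hat{x}=x-2\,\Re\!\left(\frac{x\cdot\overline{\phi'}}{|\phi'|^2}\,\phi'\right).
\]
Geometrically $\hat{x}$ is the component of the position vector $x$ normal to the tangent plane $\mathrm{span}_{\R}\{\Re\phi',\Im\phi'\}$; the whole point is that the most singular part $\vec{v}_{-m}z^{-m}+\text{c.c.}$ of $x$ lies, to leading order, inside this plane and is therefore projected away, which is the mechanism making $\hat{x}$ less singular than $x$.

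First I would record the algebraic constraints forced on the coefficients by conformality $\phi'\cdot\phi'\equiv0$ and super-conformality $\phi''\cdot\phi''\equiv0$ (recall $x_{zz}=\phi''$, so the latter is $Q\cdot Q=\<\kappa,\kappa\>\equiv0$). Matching the most singular coefficients gives $\vec{v}_{-m}\cdot\vec{v}_{-m}=0$ and $\vec{v}_{-m}\cdot\vec{v}_{k-m}=0$; combining the two identities one order further eliminates the auxiliary coefficient $\vec{v}_{2k-m}$ and yields $\vec{v}_{k-m}\cdot\vec{v}_{k-m}=0$ as well (the surviving scalar factor works out to be exactly $k^{2}\neq0$). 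The isotropy $\vec{v}_{-m}\cdot\vec{v}_{-m}=0$ is what makes $\bar{\vec{v}}_{-m}\cdot\bar{\vec{v}}_{-m}=0$ and streamlines the bookkeeping below; alternatively, the isotropy of the eventual leading coefficient can be obtained for free from the already-established conformality of $\hat{x}$ (the remark after \eqref{eq-xhatz1}), which avoids the edge cases in the coefficient computation.

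The core step is the asymptotic expansion. I would expand $A:=x\cdot\overline{\phi'}$ and $B:=|\phi'|^{2}$ as mixed Laurent series, factor out $B=m^{2}|\vec{v}_{-m}|^{2}|z|^{-2m-2}(1+\cdots)$, and compute $A/B$ up to order $z^{k+1}$. The decisive phenomenon is a cancellation: the two intermediate contributions of type $z\,\zb^{\,k}$ in $A/B$ cancel, leaving
\[
\frac{A}{B}=-\frac{z}{m}-\frac{k\,(\bar{\vec{v}}_{-m}\cdot\vec{v}_{k-m})}{m^{2}|\vec{v}_{-m}|^{2}}\,z^{k+1}+(\text{higher order}).
\]
Multiplying by $\phi'=-m\vec{v}_{-m}z^{-m-1}+(k-m)\vec{v}_{k-m}z^{k-m-1}+\cdots$, the term $-\tfrac{z}{m}\cdot(-m\vec{v}_{-m}z^{-m-1})$ reproduces exactly $\vec{v}_{-m}z^{-m}$, which cancels the leading singularity of $x$; collecting the surviving $z^{k-m}$ coefficient and taking $2\Re$ produces \eqref{eq-laurent2} with leading vector $\vec{w}=\tfrac{k}{m}\big(\vec{v}_{k-m}-\tfrac{\bar{\vec{v}}_{-m}\cdot\vec{v}_{k-m}}{|\vec{v}_{-m}|^{2}}\vec{v}_{-m}\big)$. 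I expect the main obstacle to lie here: one must verify by a careful degree count that the non-harmonic terms (present because $\hat{x}$ is Willmore but not minimal) as well as all contributions of the higher coefficients $\vec{v}_{j}$, $j>k-m$, enter only at order $o(|z|^{k-m})$, so that the stated leading term is genuinely the leading one. This is where the residue-free hypothesis in \eqref{eq-laurent1} and the isotropy relations of Step~2 are used.

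Finally, for the immersion corollary I would argue directly from \eqref{eq-laurent2}. The vector $\vec{w}$ is nonzero because $\vec{v}_{-m},\vec{v}_{k-m}$ are $\C$-linearly independent, and $\vec{w}\cdot\vec{w}=0$ either from $\vec{v}_{k-m}\cdot\vec{v}_{k-m}=0$ or from conformality of $\hat{x}$. When $k-m>0$, $\hat{x}$ extends continuously with $\hat{x}_z\sim(k-m)\vec{w}\,z^{k-m-1}$, so the conformal factor $|\hat{x}_z|^{2}$ has a nonzero finite limit at $z=0$ precisely when $k-m=1$, and a branch point otherwise. When $k-m<0$, I apply an inversion and use $\vec{w}\cdot\vec{w}=0$ to get $|\hat{x}|^{2}\sim 2|\vec{w}|^{2}|z|^{2(k-m)}$, whence $\hat{x}/|\hat{x}|^{2}\sim|\vec{w}|^{-2}\Re(\bar{\vec{w}}\,z^{m-k})$; this is immersed at $z=0$ iff $m-k=1$, i.e.\ $k-m=-1$. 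The case $k-m=0$ reduces to $k-m>0$ by absorbing the constant $\vec{v}_{0}$ as indicated in the statement. Throughout, the delicate point is the uniform control of the discarded terms as $o(|z|^{k-m})$ rather than any single algebraic identity.
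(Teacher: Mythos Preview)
Your proposal is correct and follows essentially the same route as the paper. The paper's own proof is extremely terse: it records only the two conformality relations $\vec{v}_{-m}\cdot\vec{v}_{-m}=0=\vec{v}_{-m}\cdot\vec{v}_{k-m}$ and then declares the rest to be ``just power series expansion \ldots\ where one needs only to keep track of the lowest and the second lowest order terms,'' omitting all details. Your outline is precisely a fleshed-out version of that computation, including the key cancellation of the $z\bar z^{\,k}$ contributions in $A/B$ that makes the leading term of $\hat{x}$ purely harmonic at order $|z|^{k-m}$. Your additional observation that super-conformality combined with conformality forces $\vec{v}_{k-m}\cdot\vec{v}_{k-m}=0$ with surviving scalar factor $k^2$ is correct (indeed $(k-m-1)^2+(m+1)(2k-m-1)=k^2$) and is not stated in the paper; it cleanly justifies the inversion step in the corollary, for which the paper instead appeals to the flat-end Proposition~\ref{prop-flatend}.
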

\begin{proof}
By the conformal property $x_z\cdot x_z\equiv 0$ there should be
$\vec{v}_{-m}\cdot \vec{v}_{-m}=0=\vec{v}_{-m}\cdot \vec{v}_{k-m}$. The rest of this proof is just power series expansion (with respect to the variables $z$ and $\zb$), where one needs only to keep track of the lowest and the second lowest order terms. We omit the details of this straightforward computation.
\end{proof}
Notice that the case $k-m=-1$ is similar to the \emph{flat ends} of a minimal surface discussed by Bryant (on page 47-48 of \cite{bryant1}). It is easy to show that around such a flat end, $\hat{x}$ extends smoothly to an immersed regular surface in $S^n$. This can be stated as a more general result as below whose proof is straightforward. (When $k-m<-1$, after inversion we will get a branch point at this end.)
\begin{proposition}\label{prop-flatend} ( page 47-48 of \cite{bryant1})
Let $\hat{x}:D^2\setminus\{0\}\to \R^n$ be one end of an immersed real analytic surface. Assume that at $z=0$ it has expansion $\hat{x}=2Re(\frac{1}{z})+h(z,\zb)$ where $h(z,\zb)$ denotes a convergent power series $(z,\zb)$ in a small neighborhood of $z=0$. Then $I\circ \hat{x}$ extends smoothly to $z=0$ where $I$ is an inversion with respect to an arbitrary hyper-sphere in $\R^n$.
\end{proposition}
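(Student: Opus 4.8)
The plan is to reduce the statement to a single coordinate computation under inversion. The hypothesis gives $\hat{x} = 2\Re(1/z) + h(z,\bar z)$ near $z=0$, with $h$ real-analytic (convergent in $(z,\bar z)$), and the original surface $\hat{x}$ immersed away from $z=0$. Since the leading term $2\Re(1/z)$ blows up as $z\to 0$, the surface escapes to infinity in $\R^n$; the natural device is to apply an inversion $I$ to bring this end to a finite point and check smoothness and regularity there. Because inversions with respect to distinct hyperspheres differ by a M\"obius transformation that is smooth and regular wherever defined, it suffices to verify the claim for one convenient inversion, namely $I(\vec{p}) = \vec{p}/|\vec{p}|^2$ (after translating so the center sits at the origin); the general case follows by composing with a conformal diffeomorphism that is a local diffeomorphism near the image point.

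First I would isolate the pole. Write $\hat{x} = 2\Re(1/z) + h$, so in terms of the real coordinate picture the leading singular part is $\vec{e}\,\Re(1/z)$ along a fixed Euclidean direction $\vec{e}$ (coming from the direction of the coefficient vector of $1/z$). Then I would compute $|\hat{x}|^2$: the dominant term is $|2\Re(1/z)|^2 \sim 4\,(\Re(1/z))^2$, which is $\Theta(|z|^{-2})$, and the cross and lower-order terms are $O(|z|^{-1})$. Thus $I(\hat{x}) = \hat{x}/|\hat{x}|^2$ has every component of order $O(|z|)$, i.e.\ it tends to $\vec{0}$ as $z\to 0$. The heart of the computation is to show that after multiplying $\hat{x}$ by $|\hat{x}|^{-2}$, the result extends to a real-analytic (hence smooth) function of $(z,\bar z)$ across $z=0$, by expanding $|\hat{x}|^{-2}$ as a convergent series. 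Concretely, factoring out the leading $|z|^{-2}$ from $|\hat{x}|^2$ leaves a nonvanishing real-analytic factor near $0$, whose reciprocal is again real-analytic; multiplying by the $O(|z|^{-1})$ numerator $\hat{x}$ yields a series starting at order $|z|$ that converges near $0$.

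Next I would check that the extension is an \emph{immersion} at $z=0$, not merely smooth. This is where I expect the only real subtlety. I would compute $(I\circ\hat{x})_z$ and evaluate its leading-order behavior as $z\to 0$; the claim is that the differential has maximal rank at the origin. The point is that the inversion is a conformal diffeomorphism of $\R^n\setminus\{0\}$ that extends to a diffeomorphism of $S^n$, and $\hat{x}$ is an immersion on $D^2\setminus\{0\}$ mapping into this punctured space; the content is purely that no rank is lost in the limit $z\to 0$. Concretely one verifies that $\<(I\circ\hat{x})_z,(I\circ\hat{x})_z\>$ and $\<(I\circ\hat{x})_z,(I\circ\hat{x})_{\bar z}\>$ extend continuously with the conformal factor $\<(I\circ\hat{x})_z,(I\circ\hat{x})_{\bar z}\>$ staying strictly positive at $z=0$, which rules out a branch point. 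This uses that the leading singular direction $\vec{e}$ is a genuine Euclidean direction (the coefficient of the simple pole $1/z$ is nonzero) so that the $|z|^{-2}$ growth is honest and the factored-out real-analytic factor is nonzero at $0$.

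The main obstacle, then, is organizing the Laurent/Taylor bookkeeping so that the singular and regular parts combine correctly after inversion, specifically confirming that the conformal factor of $I\circ\hat{x}$ remains bounded away from zero at the end. Once smoothness and nonvanishing first-order behavior are established, real-analyticity of $h$ guarantees that the extended map is itself real-analytic, and independence from the choice of inverting hypersphere follows because any two such inversions differ by a M\"obius transformation of $S^n$ that is a local diffeomorphism near the image of the end, hence preserves both smoothness and the immersion property. I would present the pole-isolation and the $|\hat{x}|^{-2}$ expansion explicitly and merely indicate the rank computation, since it reduces to tracking leading-order terms.
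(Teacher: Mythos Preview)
The paper does not actually prove this proposition; it simply declares the proof ``straightforward'' and refers to Bryant. Your explicit inversion computation is exactly the intended argument, so in that sense your approach matches the paper's.

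Two corrections are worth making. First, the notation $2\Re(1/z)$ in the statement is shorthand for $2\Re(\vec v/z)$ with $\vec v\in\C^n$ a nonzero coefficient vector (compare Lemma~\ref{lem-min-ends} and Remark~\ref{rem-end-gaussmap}); it is \emph{not} a scalar multiple of a fixed real direction $\vec e$ as you wrote. Consequently your formula $|2\Re(1/z)|^2\sim 4(\Re(1/z))^2$ is not right. The correct computation is
\[
\bigl|2\Re(\vec v/z)\bigr|^2
=\frac{2}{|z|^2}\Bigl(|\vec v|^2+\Re\bigl((\vec v\cdot\vec v)\,\bar z^{\,2}/|z|^2\bigr)\Bigr),
\]
which is $\Theta(|z|^{-2})$ precisely when $|\vec v\cdot\vec v|<|\vec v|^2$, i.e.\ when $\Re\vec v$ and $\Im\vec v$ are $\R$-linearly independent. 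In every application in this paper the coefficient arises from conformal (often isotropic) data, so $\vec v\cdot\vec v=0$ and the estimate is clean; but as literally stated your argument needs this nondegeneracy hypothesis for the factoring of $|\hat x|^{-2}$ to produce a nonvanishing real-analytic factor at $0$. Once that is in place, your scheme---pull out the $|z|^{-2}$ from $|\hat x|^2$, invert the remaining nonvanishing analytic factor, and multiply back---is exactly right.

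Second, the proposition as stated only asserts that $I\circ\hat x$ extends \emph{smoothly}; the immersion property at $z=0$ that you go on to verify is an additional (and in context desirable) conclusion, not part of the claim. Your rank check via $\langle(I\circ\hat x)_z,(I\circ\hat x)_{\bar z}\rangle>0$ at $z=0$ is the right way to get it, and again relies on the same nondegeneracy of $\vec v$.
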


\begin{remark}
We can always assume that $x$ has the expansion given by \eqref{eq-laurent1} with $\vec{v}_{-m}\nparallel \vec{v}_{k-m}$. Otherwise, suppose the Laurent series is $\vec{v}_{-m}(z^{-m}+a_1 z^{1-m}+\cdots+a_{k-1}z^{k-1-m})+\vec{v}_{k-m}z^{k-m}+\cdots$ for some integer $k,m$ and $\vec{v}_{-m}\nparallel \vec{v}_{k-m}$. We choose a new holomorphic coordinate $\tilde{z}$ suitably so that $\tilde{z}^{-m}=z^{-m}+a_1 z^{1-m}+\cdots+a_{k-1}z^{k-1-m}$. The local existence of such a $\tilde{z}$ is easy to prove by a standard argument. Then the Laurent expansion of $x(\tilde{z})$ has the desired form.
\end{remark}

We summarize the above conclusions as below. This will be used later in the construction of Willmore 2-spheres in $S^5$.

\begin{theorem}\label{prop-immersion}
Let $x:M^2=\overline{M}\setminus\{p_1,\cdots,p_k\}\to \R^n$ be a complete minimal surface defined on a compact Riemann surface $\overline{M}$ with ends $\{p_1,\cdots,p_k\}$. Suppose:

(i1) $x$ is immersed;

(i2) $x$ has no umbilic points, i.e. $x_{zz}\nparallel x_z$ for any local complex coordinate $z$;

(i3) $x_{zz}\cdot x$ and $x_z\cdot x$ never vanish simultaneously at one point;

(i4) At each end $p_j$, if we take a coordinate with $z(p_j)=0$, then $x$ has the following expansion
\[
x=2Re\left(\vec{v}_{-m}\frac{1}{z^m}+\vec{v}\frac{1}{z}
+O(1)\right),~~\text{or}~~
x=2Re\left(\vec{v}_{-m}\frac{1}{z^m}+\vec{v}_0+\vec{v} z
+O(|z|^2)\right)
\]
where $m\ge 2$, and $\vec{v}_{-m},\vec{v}\in \C^n$ are linearly independent over $\C$ (thus both are non-zero).

Then the pedal surface $\hat{x}$ given by \eqref{eq-xhat2} extends to the whose $\overline{M}^2$ as a closed Willmore surface conformally immersed in $S^n$. In particular, the condition (i3) can always be achieved by a re-choice of the pedal point (the origin).
\end{theorem}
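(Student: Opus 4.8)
The plan is to analyze $\hat{x}$ separately on the open surface $M^2$ and in a punctured neighborhood of each end $p_j$, and then to assemble these pieces into a single smooth conformal immersion of $\overline{M}$ into $S^n$. On $M^2$ the main work is already done for us: the pedal surface \eqref{eq-xhat2} is the special adjoint transform obtained by setting $g\equiv 0$ in \eqref{eq-xhat1}, so by Theorem~\ref{thm-adjoint-dual} it is a (branched) conformal Willmore surface, and only the absence of branch points must be checked. From the explicit formula \eqref{eq-xhatz2} for $\hat{x}_z$, together with superconformality $Q\cdot Q=0$ and $x_z\neq 0$ (since $x$ is an immersion), the vanishing locus $\hat{x}_z=0$ away from umbilics is exactly where $x_{zz}\cdot x=0$ and $x_z\cdot x=0$ hold simultaneously, as already observed before the theorem. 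Hypothesis (i2) removes the umbilics and (i3) removes the simultaneous vanishing, so $\hat{x}$ is a conformal immersion on all of $M^2$.

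Next I would treat the ends. Fixing an end $p_j$ and a coordinate $z$ with $z(p_j)=0$, after the coordinate normalization described in the Remark following Lemma~\ref{lem-min-ends} hypothesis (i4) puts the Laurent expansion of $x$ into the form \eqref{eq-laurent1} with $\C$-linearly independent leading coefficients $\vec{v}_{-m}$ and $\vec{v}_{k-m}=\vec{v}$. The two alternatives in (i4) correspond precisely to the two admissible exponents in Lemma~\ref{lem-min-ends}: the first expansion gives $k-m=-1$ and the second gives $k-m=1$ (the constant $\vec{v}_0$ being the irrelevant $k-m=0$ contribution). Feeding these into \eqref{eq-laurent2}, in the case $k-m=-1$ the pedal surface blows up like $2\Re\big((\cdots)/z\big)$ and Proposition~\ref{prop-flatend} shows that composition with an inversion of $\R^n$ produces a smooth regular immersed point at $z=0$; in the case $k-m=1$ the map $\hat{x}$ has a finite limit with nonvanishing linear term, hence extends directly as a smooth immersion across $z=0$. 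Since $\R^n$ and all its inversions are conformal charts of $S^n$, in either case $\hat{x}$ extends to a smooth conformal immersion into $S^n$ near $p_j$.

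Gluing the interior and the finitely many ends yields a smooth conformal immersion $\hat{x}:\overline{M}\to S^n$. To see that it is Willmore everywhere, note that the Willmore equation \eqref{willmore} already holds on the dense open set $M^2$ by Theorem~\ref{thm-adjoint-dual}; because $\hat{x}$ is now a genuine smooth conformal immersion of $\overline{M}$, its M\"obius invariants $\kappa,s$ extend continuously (indeed real-analytically, by Remark~\ref{analytic}) to the added points, and the Willmore equation, being a closed pointwise condition, persists there by continuity. Thus $\hat{x}:\overline{M}\to S^n$ is a closed Willmore surface conformally immersed in $S^n$.

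Finally, for the asserted freedom in (i3): conditions (i1), (i2), (i4) are invariant under translations of $\R^n$, whereas (i3) is exactly the branch-point equation \eqref{eq-branch} for the pedal point $x_0=\vec{0}$. Using (i1)--(i2) (with $n\ge 5$) the real and imaginary parts of $x_z,x_{zz}$ span a $4$-dimensional subspace at each point, so the solution set of \eqref{eq-branch} is an $(n-4)$-dimensional affine subspace; letting $p$ range over the $2$-dimensional $M$ shows that the obstruction set $\digamma=\bigcup_{p\in M}\{\,\mathbf{v}\in\R^n\mid x(p)-\mathbf{v}\perp x_z(p),x_{zz}(p)\,\}$ has dimension at most $n-2$ and hence measure zero (writing $M$ as a countable union of compact pieces to handle its noncompactness). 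Choosing a new origin $x_0\notin\digamma$ and translating makes (i3) hold without affecting (i1), (i2), (i4). I expect the main obstacle to be the end analysis of the middle two paragraphs: one must ensure that the coordinate normalization together with Lemma~\ref{lem-min-ends} really exhausts every end permitted by (i4), and that the inversion used at the $k-m=-1$ ends is globally compatible with the direct extension at the $k-m=1$ ends, so that a single smooth conformal immersion of $\overline{M}$ into $S^n$ emerges.
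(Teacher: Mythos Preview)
Your proposal is correct and follows essentially the same approach as the paper. In fact the paper presents Theorem~\ref{prop-immersion} explicitly as a summary of the preceding results in Section~7 (Proposition~7.2 for the interior branch-point analysis and the re-choice of pedal point, Lemma~\ref{lem-min-ends} and Proposition~\ref{prop-flatend} for the ends), and your reconstruction of how these pieces assemble is faithful to that; your added remarks about extending the Willmore condition by continuity and about the global compatibility of the end charts in $S^n$ are reasonable details that the paper leaves implicit.
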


\begin{remark}\label{rem-end-gaussmap}
Lemma~\ref{lem-min-ends} provides a negative answer to a problem mentioned in the end of the introduction: \vspace{2mm}

\emph{If the original Willmore surface is smooth and analytic, can we expect the adjoint transform being good enough so that the conformal Gauss map still extends smoothly to those possible singularities?}\vspace{2mm}

A counterexample of such a pair of adjoint Willmore surfaces can be constructed as below using Lemma~\ref{lem-min-ends}. Let $x$ be a super-conformal Euclidean minimal surface, and $\hat{x}$ its adjoint transform; $p$ is taken to be the point $z=0$ and assume $m\ge 2, k=m-1$. Set
\begin{equation}\label{eq-flatend}
x=2Re\left(\vec{v}_{-m}\frac{1}{z^m}+\vec{v}_{-1}\frac{1}{z}
+O(1)\right),~~
\hat{x}=2Re\left((\cdots)\frac{1}{z}+O(1)\right).
\end{equation}
In this example, $\hat{x}$ has a \emph{flat end} in $\R^n$ which can be compactified smoothly in $S^n$ with a smooth conformal Gauss map around $z=0$ by Proposition~\ref{prop-flatend}.
On the contrary, for the minimal surface $x$ (which is also an adjoint transform of $\hat{x}$), its mean curvature spheres, i.e., those tangent planes, do not have a limit when $z\to 0$, since $x$ does not have an asymptotic 2-plane at this end.

This phenomena shows an interesting difference from the codim-2 case. For a Willmore surface in $S^4$, the adjoint transform is essentially equivalent to the \emph{2-step B\"acklund transform} (which is just a suitable composition of two 1-step B\"acklund transforms) introduced in \cite{BFLPP}. It has been showed in \cite{Leschke} that the mean curvature sphere congruences of 1-step B\"acklund transforms always extend smoothly across the possible branch points.
\end{remark}

\section{Examples of immersed Willmore 2-spheres in $S^5$ and $S^6$ which are not S-Willmore}

In this section, we will derive new examples of Willmore two spheres in $S^5$ and $S^6$ by constructing pedal surfaces from suitable minimal surfaces in $\R^5$ or $\R^6$.

The first subsection shows the concrete construction and explicit expressions of a minimal surface $x$ in $\R^6$ together with its adjoint Willmore surface $\hat{x}$, where $\hat{x}$ is an immersion in $S^6$ and it is not S-Willmore. Both $x$ and $\hat{x}$ are totally isotropic in $S^6$. $x$ has a branched point at infinity.

In subsection 8.2, we begin from a superconformal minimal surface $x$ in $\R^n$, which is not totally isotropic and has 3 ends. For its pedal surfaces to be immersed, one needs $n\geq 6$. This way, we obtain a Willmore two-sphere full in $S^6$, which is not S-Willmore and has non-isotropic Hopf differential.

In subsection 8.3, we finally obtain a Willmore two-sphere $\hat{x}$ in $S^5$, which is not S-Willmore and has non-isotropic Hopf differential. It is a pedal surfce of a superconformal minimal surface $x$ in $\R^5$, where $x$  has 4 ends.


\subsection{Example 1}
Below we describe a minimal surface $x$ in $\R^6$, together with one adjoint surface $\hat{x}$, defined on $\C$ using the coordinate $z$, which is first derived in \cite{Wang}:
\begin{equation}\label{eq-example1}
x=\begin{pmatrix}
\frac{i}{4}\left(z-\zb\right)\\
-\frac{1}{4}\left(z+\zb\right)\\
-\frac{i}{2}\left(\frac{1}{\zb}-\frac{1}{z}\right)\\
\frac{1}{2}\left(\frac{1}{\zb}+\frac{1}{z}\right)\\
\frac{i}{6}\left(z^2-\zb^2\right)\\
-\frac{1}{6}\left(z^2+\zb^2\right)
\end{pmatrix},~~
\hat{x}=\frac{1}{1+\frac{r^4}{4}+\frac{4r^6}{9}}
\begin{pmatrix}
\left(1+\frac{r^6}{9}\right)\frac{i}{2}\left(z-\zb\right)\\
\left(1+\frac{r^6}{9}\right)\frac{-1}{2}\left(z+\zb\right)\\
\left(\frac{r^2}{4}+\frac{r^4}{3}\right)i\left(\zb-z\right)\\
\left(\frac{r^2}{4}+\frac{r^4}{3}\right)\left(\zb+z\right)\\
\left(1-\frac{r^4}{12}\right)\frac{i}{2}\left(z^2-\zb^2\right)\\
\left(1-\frac{r^4}{12}\right)\frac{-1}{2}\left(z^2+\zb^2\right)
\end{pmatrix}.
\end{equation}
Obviously $x$ is a totally isotropic conformal minimal surface.
Denote $r^2=|z|^2$. We compute
\begin{equation}\label{eq-example11}
\begin{aligned}
x\cdot x=\frac{1}{r^2}\Big(1+\frac{r^4}{4}+\frac{r^6}{9}\Big),
~\Rightarrow~~~&
x_z\cdot x=\frac{(x\cdot x)_z}{2}=\frac{-1}{2zr^2}
 \Big(1-\frac{r^4}{4}-\frac{2r^6}{9}\Big),\\
\Rightarrow~~~& x_{zz}\cdot x=(x_z\cdot x)_z=\zb^2 \Big(\frac{1}{9}+\frac{1}{r^6}\Big)\ne 0,\\
& x_z\cdot x_{\zb}=(x_z\cdot x)_{\zb}=
\frac{1}{2r^4} \Big(1+\frac{r^4}{4}+\frac{4r^6}{9}\Big)>0.
\end{aligned}
\end{equation}
Then it is easy to verify that $\hat{x}$ is the pedal surface of $x$ with pedal point $\vec{0}\in \R^6$ using \eqref{eq-xhat2}. In particular, $\hat{x}$ is also totally isotropic.

The  Willmore two-sphere $\hat{x}$ first appeared in Section 5.3 of \cite{DoWa1} by J. Dorfmeister and P. Wang, as the first example of Willmore two-sphere in $S^6$ which is not S-Willmore. Using the celebrated DPW method and a simplest choice of meromorphic potential, they were able to find this $\hat{x}:\C\to \R^6$ as an immersed Willmore surface which extends to $z=\infty$ smoothly in $S^6$. This provided the first example of a Willmore 2-sphere in $S^n$ which is not S-Willmore. Note that in \cite{DoWa1} this example was represented in $S^6$ and denoted by $x_{\lambda}$ with $\lambda=1$, where $\lambda\in S^1$ is the parameter of a loop in the theory of loop groups.

The pair of the adjoint surfaces $x$ and $\hat{x}$ are derived in the same spirit via another harmonic map related with Willmore surfaces, which was discovered  by Helein \cite{Helein} and Xiang Ma \cite{ma2} in different approaches. We refer to \cite{Wang} for details.
\\

\textbf{Claim 1.} ~$\hat{x}$ is an immersion from $\C\cup\{\infty\}$ to $S^6$.

This can be checked directly using \eqref{eq-example1}. For $\hat{x}$ one can find a local lift, written as a column vector:
\begin{equation}\label{eq-example12}
\hat{X}=
\begin{pmatrix}
i\left(z-\zb\right)\left(1+\frac{r^6}{9}\right)\\
-\left(z+\zb\right)\left(1+\frac{r^6}{9}\right)\\
i\left(\zb-z\right)\left(\frac{r^2}{2}+\frac{2r^4}{3}\right)\\
\left(\zb+z\right)\left(\frac{r^2}{2}+\frac{2r^4}{3}\right)\\
i\left(z^2-\zb^2\right)\left(1-\frac{r^4}{12}\right)\\
-\left(z^2+\zb^2\right)\left(1-\frac{r^4}{12}\right)\\
1-r^2-\frac{3r^4}{4}+\frac{4r^6}{9}-\frac{r^8}{36}\\
1+r^2+\frac{5r^4}{4}+\frac{4r^6}{9}+\frac{r^8}{36}
\end{pmatrix}.
\end{equation}
Computation shows $\frac{1}{2}\<\hat{X}_z,\hat{X}_{\zb}\>=
1+4r^2+\frac{r^4}{4}+\frac{2r^6}{9}+\frac{4r^8}{9}+
\frac{r^{10}}{36}+\frac{r^{12}}{81}>0$. Hence
\[
\hat{y}=\frac{1}{1+r^2+\frac{5r^4}{4}+\frac{4r^6}{9}
+\frac{r^8}{36}}\hat{X}
\]
gives a conformal immersion of $\C$ into $S^6(1)$. Take a new coordinate $\tilde{z}=1/z$ at $z=\infty$. The induced metric of $\hat{y}$ is
\[
\<\hat{y}_z,\hat{y}_{\zb}\>
\cong \frac{36^2}{r^{16}}\<\hat{X}_z,\hat{X}_{\zb}\>
\cong 32\frac{|dz|^2}{r^4}=32|d\tilde{z}|^2.
\]
So $y$ extends to be an immersion $\C\cup\{\infty\}\to S^6$.

Alternatively, we can verify this claim using Theorem~\ref{prop-immersion}. The condition (i1)-(i3)
hold true on $\C\setminus\{0\}$ by \eqref{eq-example11}.
As to condition (i4), $x$ has the desired form at both ends. Compared to Lemma~\ref{lem-min-ends}, here $m=-1,k-m=1$ at $z=0$, $m=-2,k-m=-1$ at $z=\infty$. Then our claim follows.\\

\textbf{Claim 2.} $\hat{x}$ is not S-Willmore.

In \cite{DoWa1}, this was part of Theorem~5.12, which follows from Theorem~3.10 and Corollary~3.13 in that paper, and depended on the fact that the corresponding normalized potential $\hat{B}_1$ has rank $2$.

Here we need only to check that the subspace spanned by frame vectors \[\{\hat{X},\hat{X}_z,\hat{X}_{\zb},\hat{X}_{z\zb},
\hat{X}_{zz},\hat{X}_{zz\zb}\}\] has dimension $6$, i.e., the corresponding matrix is of full rank, when $z=0$. Using \eqref{eq-example12} which is itself a vector-valued polynomial, we check that this matrix at $z=0$ is given by
\[
(\hat{X},\hat{X}_z,\hat{X}_{\zb},\hat{X}_{z\zb},
\hat{X}_{zz},\hat{X}_{zz\zb})_{z=0}=\begin{pmatrix}
0 & i & -i & 0 & 0 & 0\\
0 & -1 & -1 & 0 & 0 & 0\\
0 & 0 & 0 & 0 & 0 & -i\\
0 & 0 & 0 & 0 & 0 & 1\\
0 & 0 & 0 & 0 & 2i & 0\\
0 & 0 & 0 & 0 & -2 & 0\\
1 & 0 & 0 & -1 & 0 & 0\\
1 & 0 & 0 & 1 & 0 & 0\\
\end{pmatrix}.
\]
The rank is $6$. Thus the Hopf differential of $\hat{x}$, denoted by $\hat\kappa$, is linearly independent to $\hat{D}_{\zb}\hat\kappa$, at least in a neighborhood of $z=0$.
By the real analytical property of Willmore surfaces, $\hat{x}$ is not S-Willmore in an open dense subset. Claim~2 is proved.

\subsection{Example 2}

To find more examples of Willmore 2-spheres which are not S-Willmore, we have reduced the problem to constructing  super-conformal minimal surfaces in $\R^n$ using rational functions on $\C$. The general procedure is prescribing the number of ends and the end behavior, then solving the coefficient vectors so that $x$ satisfies
\begin{equation}\label{eq-mini-isotropic}
x_z\cdot x_z=x_{zz}\cdot x_{zz}=0.
\end{equation}
This amounts to solving an algebraic equation system.
When the number of ends is small, usually we find only trivial solutions, corresponding to totally isotropic examples which exist only in even-dimensional space.

Here we try to construct a super-conformal minimal surface $x$ in $\R^n$ with genus $0$ and three ends, satisfying the formula \eqref{eq-flatend}. It is always possible to assign these three ends at $z=0,1,\infty$ (up to a suitable fraction linear transformation on $\C$). The simplest candidate surfaces are of the form
\begin{equation}\label{eq-example2}
x_z=\frac{\vec{u}_3}{z^3}+\frac{\vec{u}_2}{z^2}
+\frac{\vec{v}_3}{(z-1)^3}+\frac{\vec{v}_2}{(z-1)^2}
+\vec{w}_2+\vec{w}_3 z.
\end{equation}
For these surfaces, we have the following result.

\begin{proposition}\label{prop-mini-s6}
There exists a conformal minimal surface $x$ in $\R^n$ defined by \eqref{eq-example2} on the complex plane, which is supposed to be super-conformal (1-isotropic) but not $2$-isotropic, i.e., $x_{zz}\cdot x_{zz}\equiv 0, x_{zzz}\cdot x_{zzz}\not\equiv 0$. Moreover, we have the following conclusions.

(1) All such examples lie in a unique associated family of minimal surfaces in an affine subspace $\R^6$ (up to rigid motions and dilations).

(2) It has an adjoint surface $\hat{x}$ which extends to be an immersed Willmore 2-sphere in $S^6$. $\hat{x}$ is not super-conformal.
\end{proposition}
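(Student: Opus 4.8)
The plan is to reduce the entire statement to a finite algebraic problem and then to feed the resulting surface into the machinery of Sections~6 and~7.

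First I would read the structure off the Weierstrass-type data \eqref{eq-example2} directly. Since $x_z$ is holomorphic on $\C\setminus\{0,1\}$ and, by the chosen pole pattern, carries no residue at $0$, $1$ or $\infty$, the map $x=2\Re\int x_z\,\d z$ is a well-defined conformal minimal immersion of the thrice-punctured sphere as soon as $x_z\cdot x_z\equiv0$. Both $x_z\cdot x_z$ and $x_{zz}\cdot x_{zz}$ are rational functions of $z$ whose only poles lie at $0$, $1$, $\infty$; expanding them in partial fractions and equating the coefficient of each power of $z$, $1/z^{j}$ and $1/(z-1)^{j}$ to zero turns the two identities into a finite system of quadratic equations in the pairwise Euclidean products of the six coefficient vectors $\vec{u}_3,\vec{u}_2,\vec{v}_3,\vec{v}_2,\vec{w}_2,\vec{w}_3$.

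The core of the argument is to solve this quadratic system. It is invariant under the complexified rotation group $O(n,\C)$, under multiplying $x_z$ by a nonzero complex constant (the modulus giving a dilation, the phase generating the associated family), and under translations of $\R^n$. I expect that, after normalizing a few inner products with these symmetries, the equations force the real and imaginary parts of the six coefficient vectors to span exactly a $6$-dimensional real subspace $W\subset\R^n$, uniquely up to the listed symmetries. Consequently $x$ lies in an affine $\R^6$ and the solution is unique up to rigid motions, dilations and the associated family, which is part~(1). A single evaluation of $x_{zzz}\cdot x_{zzz}$ then shows it is not identically zero, so $x$ is super-conformal but not $2$-isotropic. \textbf{I expect this to be the main obstacle}: the quadratic system is sizeable, and the delicate point is to prove that the isotropy constraints pin the effective span down to \emph{exactly} six real dimensions, neither collapsing to a lower even dimension (which would give a totally isotropic example) nor leaving extra directions.

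For part~(2) I would verify the hypotheses of Theorem~\ref{prop-immersion} for the pedal surface $\hat{x}$ of \eqref{eq-xhat2}. Conditions (i1) and (i2) are checked directly from the explicit solution of part~(1), and (i3) can always be arranged by relocating the pedal point. For (i4) one reads off from \eqref{eq-example2} that each of the three ends matches the first expansion in (i4) with $m=2$: near $z=0$ one has $x=2\Re\!\left(-\tfrac12\vec{u}_3 z^{-2}-\vec{u}_2 z^{-1}+O(1)\right)$, similarly near $z=1$, and in the coordinate $\tilde{z}=1/z$ near $z=\infty$ one has $x=2\Re\!\left(\tfrac12\vec{w}_3\tilde{z}^{-2}+\vec{w}_2\tilde{z}^{-1}+O(1)\right)$, the leading pairs being linearly independent by part~(1). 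Theorem~\ref{prop-immersion} then gives that $\hat{x}$ closes up to a conformally immersed Willmore $2$-sphere in $S^6$, full because $x$ already spans $\R^6$. Finally, $\hat{x}$ is not super-conformal: as $x$ is a minimal surface it is S-Willmore and super-conformal, so $\eta=c\,\kappa$ for some function $c\not\equiv0$ (the pedal transform is not the dual), and exactly as in the proof of Proposition~\ref{isotropic0}(2) one has $\widehat{Y}_{zz}\equiv D_z\eta$ modulo the frame vectors $\{\widehat{Y},\widehat{Y}_z,\eta\}$. Then $\<D_z\eta,D_z\eta\>=c^2\<D_z\kappa,D_z\kappa\>$ is a nonzero multiple of the $2$-isotropy quantity $x_{zzz}\cdot x_{zzz}\not\equiv0$, so $\<\hat\kappa,\hat\kappa\>\not\equiv0$; alternatively one checks, as in Claim~2 of Example~1, a point where $\hat\kappa$ and $\hat{D}_{\zb}\hat\kappa$ are linearly independent.
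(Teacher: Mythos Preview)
Your plan is correct and follows the paper's proof essentially step for step: the paper expands $x_z\cdot x_z$ and $x_{zz}\cdot x_{zz}$, solves the resulting linear system in the Gram entries $\vec e_i\cdot\vec e_j$ to obtain an explicit $6\times 6$ Gram matrix of full rank (this explicit matrix is exactly the computation resolving what you flag as the main obstacle), concludes the surface is unique in an affine $\R^6$ up to the listed symmetries, and then checks (i1)--(i4) to invoke Theorem~\ref{prop-immersion}. Your argument for ``$\hat x$ is not super-conformal'' via $\langle D_z\eta,D_z\eta\rangle=c^{2}\langle D_z\kappa,D_z\kappa\rangle$ and its identification with a nonzero multiple of $x_{zzz}\cdot x_{zzz}$ actually supplies a step the paper's proof leaves implicit (the paper asserts this conclusion but, since Proposition~\ref{isotropic0}(2) is stated only for $S^5$, does not spell out the codimension-four case).
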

\begin{remark}
Although this example can not be contained in $\R^5$ as we originally expected, compared with Example~1, This is the new type of examples (NOT totally isotropic) as predicted in \cite{DoWa1} (see their discussions in Section 5.3).
\end{remark}
\begin{proof}
It follows from \eqref{eq-example2} that
\begin{equation}\label{eq-example21}
x_{zz}=\frac{-3\vec{u}_3}{z^4}+\frac{-2\vec{u}_2}{z^3}
+\frac{-3\vec{v}_3}{(z-1)^4}+\frac{-2\vec{v}_2}{(z-1)^3}+
\vec{w}_3.
\end{equation}
As usual, the conformal condition $x_z\cdot x_z\equiv 0$ immediately implies
\begin{equation}\label{eq-example22}
0=\vec{u}_j\vec{u}_k=\vec{v}_j\vec{v}_k=\vec{w}_j\vec{w}_k,~~
1\le j,k\le 2.
\end{equation}
Note that $\vec{u}_j\vec{u}_k$ stands for the $\C$-linear extension of the Euclidean inner product between $\vec{u}_j$ and $\vec{u}_k$; the dot $\cdot$ is omitted.
The 1-isotropic condition $0\equiv x_{zz}\cdot x_{zz}=\frac{f(z)}{z^4(z-1)^4}$ then implies
\begin{multline*}
0=f(z)=9\vec{u}_3\vec{v}_3
+6\vec{u}_3\vec{v}_2 (z-1)
+6\vec{u}_2\vec{v}_3 z
+4\vec{u}_2\vec{v}_2 z(z-1)\\
-3\vec{u}_3\vec{w}_3 (z-1)^4
-2\vec{u}_2\vec{w}_3 z(z-1)^4
-3\vec{v}_3\vec{w}_3 z^4
-2\vec{v}_2\vec{w}_3 z^4(z-1).
\end{multline*}
The coefficients must all vanish. Solving this linear equation system is easy: depending on two arbitrary complex parameters $a,b$, the solutions are
\begin{equation}\label{eq-example23}
\begin{aligned}
\vec{u}_2\vec{w}_3&=\vec{u}_3\vec{w}_3=\vec{v}_3\vec{w}_3
=-\vec{v}_2\vec{w}_3=a,\\
\vec{u}_2\vec{v}_2&=\frac{5}{2}a,~~
\vec{u}_3\vec{v}_2=-\vec{u}_2\vec{v}_3=b,~~
\vec{u}_3\vec{v}_3=\frac{2}{3}b+\frac{1}{3}a.
\end{aligned}
\end{equation}
Inserting \eqref{eq-example2} into $x_z\cdot x_z\equiv 0$ and using \eqref{eq-example22}, \eqref{eq-example23}, similarly we find
\begin{equation}\label{eq-example24}
b=-2a, \ \hbox{ and } ~\vec{u}_2\vec{w}_2=-\frac{a}{2},~~
\vec{u}_3\vec{w}_2=a,~~
\vec{v}_3\vec{w}_2=-2a,~~
\vec{v}_2\vec{w}_2=\frac{a}{2}.
\end{equation}
Denote $(\vec{e}_1,\cdots,\vec{e}_6)=
(\vec{u}_3,\vec{u}_2,\vec{v}_3,\vec{v}_2,\vec{w}_3,\vec{w}_2)$.
Then the inner product matrix is given by
\begin{equation}\label{eq-example26}
A^{6\times 6}=(\vec{e}_i\vec{e}_j)=
a\cdot\begin{pmatrix}
0 & 0 & -1 & -2 & 1 & 1\\
0 & 0 & 2 & 5/2 & 1 & -1/2\\
-1 & 2 & 0 & 0 &  1 & -2\\
-2 & 5/2 & 0 & 0 & -1 & 1/2\\
1 & 1 & 1 & -1 & 0 & 0\\
1 & -1/2 & -2 & 1/2 & 0 & 0
\end{pmatrix}.
\end{equation}
Since we have assumed that $x$ is not totally isotropic, $a\ne 0$. Without loss of generality, we may assume $a=1$. (Any other solution $x_a$ is one member in the associated family of this minimal surface $x$ up to a dilation.)

To find out $x$ we have to find a $6\times n$ matrix $B^{6\times n}$ consisting of the row vectors $\{e_i\}$ so that
\[ BB^\top=A \]
where $A^{6\times 6}$ is given as above. It is easy to verify $\mathrm{rank}(A)=6$. Because $A$ is a non-singular symmetric real matrix, there exists a \emph{real} and \emph{non-singular} matrix $B$ such that $B\cdot B^\top=A$ if and only if $n\ge 6$.
Suppose this is the case, then corresponding row vectors $\{
\vec{u}_3,\vec{u}_2,\vec{v}_3,\vec{v}_2,\vec{w}_3,\vec{w}_2\}$ span a 6-dimensional complex subspace $\C^6$. Any other solution $B'$ differs from the previous $B$ by $B'=BP$ where $P\in O(6)$ is an orthogonal matrix.

Moreover, since $A$ is real symmetric non-singular matrix, the eigenvalues of $A$ are all non-zero real numbers (indeed the signature is $(3,3)$), and all eigenvectors are real. This guarantees that the real and imaginary parts of $\{
\vec{u}_3,\vec{u}_2,\vec{v}_3,\vec{v}_2,\vec{w}_3,\vec{w}_2\}$
span a 6-dimensional real subspace $\R^6$.
Inserting them into \eqref{eq-example2} and taking integration, we know $x$ is located in a 6-dimensional affine subspace.
This finishes the proof to the first conclusion.

The second conclusion follows immediately by taking a pedal surface with a suitable chosen pedal point and using Theorem~\ref{prop-immersion}. By construction this is a 1-isotropic but not totally isotropic surface in $\R^6$. We can verify the conditions of Theorem~\ref{prop-immersion} for $x$ one by one:

(i1) It is an immersion since $x_z$ never vanishes at any point by \eqref{eq-example2} and the linear independence of $\{
\vec{u}_3,\vec{u}_2,\vec{v}_3,\vec{v}_2,\vec{w}_3,\vec{w}_2\}$.

(i2) $x_{zz}$ is always linearly independent to $x_z$ by using \eqref{eq-example2} and \eqref{eq-example21}.

(i3) This condition is satisfied for generic choice of $x_0$.

(i4) Its end behavior is as desired.\\
This completes the proof.
\end{proof}

\subsection{Example 3}

Using the same idea as the previous section, we construct a 1-isotropic minimal surface defined on
\[
\mathbb{C}\cup\{\infty\}\setminus \{0,\epsilon_1,\epsilon_2,\epsilon_3\}~~\text{where}~
\epsilon_k=e^{2k\pi i/3},~k=1,2,3.
\]
with prescribed end behavior as below:
\begin{align}
x_z&=
\frac{\vec{a}_3}{(z-\epsilon_1)^3}
+\frac{\vec{a}_2}{(z-\epsilon_1)^2}
+\frac{\vec{b}_3}{(z-\epsilon_2)^3}
+\frac{\vec{b}_2}{(z-\epsilon_2)^2}
+\frac{\vec{c}_3}{(z-1)^3}
+\frac{\vec{c}_2}{(z-1)^2}
+\frac{\vec{r}_3}{z^3}
+\frac{\vec{r}_2}{z^2}  \notag \\
&=\frac{\Phi}{(z^3-1)^3z^3}. \label{eq-example3}
\end{align}
Here $\Phi=\Phi(z)=\sum_{j=0}^{10} \vec{v}_j z^j$ should be a vector-valued polynomial of degree no more than $10$.
The vanishing of all residues is equivalent to
\begin{equation}\label{eq-example31}
\vec{v}_6 = 20\vec{v}_0 + 5\vec{v}_3 + 2\vec{v}_9,~~
\vec{v}_7 = 14\vec{v}_1 + 2\vec{v}_4 + 5\vec{v}_{10}.
\end{equation}
Together with the isotropic conditions $x_z\cdot x_z=0=x_{zz}\cdot x_{zz}$, we obtain a system of linear equations on the coefficients of the inner products $\lambda_{jk}=\vec{v}_j\vec{v}_k$, which can be solved using Maple. The general solutions depend on 6 parameters. If we put the Ansatz
\begin{equation}\label{eq-example32}
\lambda_{j,10}=0, ~~\forall~j;~~~\lambda_{08}=1,
\end{equation}
then almost all coefficients vanish except
\begin{equation}\label{eq-example33}
\lambda_{08}=  1,
\lambda_{35}=-16,
\lambda_{38}=-20,
\lambda_{44}= 30,
\lambda_{59}= 20.
\end{equation}
Substitute these into $(\lambda_{jk})^{11\times 11}$ and use \eqref{eq-example31}. The result is a matrix of rank $5$, which can be realized as inner products $\lambda_{jk}=\vec{v}_j\vec{v}_k$ for vectors $\vec{v}_j$ in a 5-dimensional space.
We omit the details and give directly the final result. Set
\[
 E_1=\left(
       \begin{array}{ccccc}
         1 & i & 0 & 0 & 0 \\
       \end{array}
     \right),\  E_2=\left(
       \begin{array}{ccccc}
        0 & 0 & 1 & i &  0 \\
       \end{array}
     \right),\ e_5= \left(
       \begin{array}{ccccc}
         0 & 0 & 0 & 0 & 1 \\
       \end{array}
     \right).
 \]
Let
\begin{equation}\label{eq-example34}\begin{split}
\Phi= (1-20z^3-80z^6)E_1+ \frac{z^8}{2}\bar{E}_1
-(8z^3+20z^6-10z^9)E_2
+z^5\bar{E}_2+\sqrt{30}(z^4+2z^7)e_5.
 \end{split} \end{equation}
with
 \[\begin{split}
\Phi_z= (-60z^2-480z^5)E_1+ 4z^7\bar{E}_1-(24z^2+120z^5-90z^8)E_2+5z^4\bar{E}_2
+\sqrt{30}(4z^3+14z^6)e_5.
 \end{split}\]
From the coefficients of $\bar{E}_1$ and $\bar{E}_2$ we see that $\Phi$ and $\Phi_z$ are linear independent on $\C\setminus\{0\}$. Hence $x$ has no umbilic points. It is also easy to see that
$x$ is immersed. Integration yields
\begin{equation}\label{eq-example-x}
\begin{split}x=x_0+2Re&\left[
\frac{1 +32 z^3}{2 (z^3-1)^2 z^2}E_1
+\frac{1-2 z^3 }{12 (z^3-1)^2 }\bar{E}_1\right.\\
&\hspace{2cm}\left.
+\frac{8z-5z^4}{ (z^3-1)^2}E_2
-\frac{1}{6 (z^3-1)^2}\bar{E}_2
-\frac{\sqrt{30} z^2}{2 (z^3-1)^2}e_5\right].
\end{split}\end{equation}
It is straightforward to compute out the coefficient vectors $\{\vec{a}_3,\vec{a}_2\},~
\{\vec{b}_3,\vec{b}_2\},~
\{\vec{c}_3,\vec{c}_2\}$ appearing in \eqref{eq-example3} and verify that each pair of them
are linearly independent to each other. On the other hand, at the end $z=0$, using \eqref{eq-example3} and \eqref{eq-example34} we see $x_z=\frac{1}{z^3}E_1-20E_1-8E_2+o(z)$.

Thus the conditions of Theorem~\ref{prop-immersion} are satisfied, \emph{except that $x$ has a branch point of order $1$ at $z=\infty$.} Taking a pedal surface $\hat{x}$ with a suitable choice of the pedal point $x_0$, we get a desirable example \emph{if only we can check that $\hat{x}$ is still immersed at $z=\infty$}.

This final step is easy. Taking a new coordinate $w=1/z$, we read from  \eqref{eq-example3} and \eqref{eq-example34} that
\[
x_w=\frac{10w}{(w^3-1)^3}E_2+\frac{w^2}{2(w^3-1)^3}\bar{E}_1+\cdots
=-10w E_2-\frac{w^2}{2}\bar{E}_1+o(w^2).
\]
in a neighborhood of $w=0$. We compute the Hopf differential $Q$ (with respect to this coordinate $w$) using this formula and \eqref{eq-xhatz2} as below:
\[
x_{ww}=-10E_2-w\bar{E}_1+o(w),~~Q=x_{ww}-\frac{x_{ww}\cdot x_{\bar{w}}}{x_w\cdot x_{\bar{w}}}x_w=\frac{-w}{2}\bar{E}_1+o(|w|).
\]
Substituting these into \eqref{eq-xhatz2} we obtain
\[
\hat{x}_w=-\frac{x_{\bar{w}}\cdot x}{x_w\cdot x_{\bar{w}}}Q
-\frac{Q\cdot x}{x_w\cdot x_{\bar{w}}}x_{\bar{w}}
=\frac{-1}{40}\left[\left(\bar{E}_2\cdot x(0)\right)\bar{E}_1+\left(\bar{E}_1\cdot x(0)\right)\bar{E}_2\right]+o(|w|).
\]
Similar to the discussion of Section~7, it is easy to make a choice of the pedal point $x_0$ so that when $w=0$, $x(0)$ is not parallel to $e_5=(0,0,0,0,1)$. Then $\hat{x}$ is also immersed when $w=0,z=1/w=\infty$. We state the conclusion as below.
\begin{proposition}
There exists an adjoint surface $\hat{x}$ of the minimal surface $x$ in \eqref{eq-example-x}, such that by conformally embedding $\R^5$ into $S^5$, $\hat{x}$ becomes a global immersion from $S^2$ into $S^5$. Moreover, $\hat{x}$ is neither super-conformal nor S-Willmore (by Proposition~\ref{isotropic0}).
\end{proposition}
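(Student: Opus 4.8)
The plan is to realize $\hat x$ as the pedal surface \eqref{eq-xhat2} of the minimal surface $x$ from \eqref{eq-example-x}, for a pedal point $x_0$ to be pinned down at the very end, and to show that a generic $x_0$ makes $\hat x$ an immersion on the whole of $S^2=\C\cup\{\infty\}$. Since $\hat x$ is by construction an adjoint transform of $x$ (Section~6), Theorem~\ref{thm-adjoint-dual} already ensures that $\hat x$ is a branched conformal Willmore map wherever it is defined, so the only thing to do is to exclude branch points. I would split the domain into the finite regular points of $x$, the four ends $\{0,\epsilon_1,\epsilon_2,\epsilon_3\}$, and the exceptional point $z=\infty$, where $x$ is itself branched.

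On the finite regular points the surface $x$ has no umbilics, because $\Phi$ and $\Phi_z$ are $\C$-linearly independent on $\C\setminus\{0\}$; hence by the discussion in Section~7 any pedal point off the exceptional set $\digamma$ gives an immersion there. At the four ends I would appeal to Theorem~\ref{prop-immersion}: conditions (i1)--(i3) hold on $\C\setminus\{0\}$, and the prescribed behaviour $x_z\sim\vec a_3/(z-\epsilon)^3+\cdots$ read off from \eqref{eq-example3} is exactly the flat-end form of (i4) (with pole order $m=2$ and $k-m=-1$). Thus $\hat x$ extends smoothly and immersed across all four ends after inversion.

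The crux is the point $z=\infty$, which belongs to the domain but is an order-one branch point of $x$, so it falls outside the scope of Theorem~\ref{prop-immersion} and must be treated by hand. Passing to $w=1/z$ and using the expansions of $x_w$, of the Hopf differential $Q$, and finally of $\hat x_w$ computed above, one finds
\[
\hat x_w=\frac{-1}{40}\Big[(\bar E_2\cdot x(0))\,\bar E_1+(\bar E_1\cdot x(0))\,\bar E_2\Big]+o(|w|).
\]
As $\bar E_1$ and $\bar E_2$ are independent and orthogonal to $e_5$, this leading term is nonzero precisely when $x(0)$ is not parallel to $e_5$; so it suffices to choose $x_0$ so that the value of $x$ at $w=0$ has a nonzero $\bar E_1,\bar E_2$-component. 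Each of the constraints imposed across the three regions (staying off $\digamma$ and keeping $x(0)\not\parallel e_5$) is open and dense in the choice of $x_0$, so a single admissible $x_0$ exists, and then $\hat x$ is a true immersion $S^2\to S^5$.

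For the remaining assertions, I would note that the vectors built from $E_1,\bar E_1,E_2,\bar E_2,e_5$ span $\C^5$, so $x$ is full in $S^5$; as a superconformal (1-isotropic) minimal surface it is superconformal Willmore, and its dual surface degenerates to the point at infinity (cf.\ Remark~\ref{rem-minimal} and \eqref{eq-x6}). Our $\hat x$, carrying a finite pedal point, is therefore an adjoint transform of $x$ distinct from the dual, so part~(2) of Proposition~\ref{isotropic0} applies and gives at once that $\hat x$ is neither superconformal nor S-Willmore. The main obstacle throughout is the immersion test at $z=\infty$: unlike the genuine ends covered by Theorem~\ref{prop-immersion}, this is an interior branch point of $x$, and one must verify the delicate cancellation that drops $\hat x_w$ to order exactly one, so that the adjoint construction precisely absorbs the branching of $x$ rather than inheriting or worsening it.
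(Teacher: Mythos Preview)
Your proposal is correct and follows the paper's own argument essentially step for step: verify (i1)--(i4) of Theorem~\ref{prop-immersion} at the four genuine ends, treat the interior branch point $z=\infty$ by the explicit expansion of $\hat x_w$ in the coordinate $w=1/z$, and then choose the pedal point generically so that all constraints are met simultaneously; the final clause about $\hat x$ being neither superconformal nor S-Willmore is likewise deduced from part~(2) of Proposition~\ref{isotropic0} in both your argument and the paper's.

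One small inaccuracy worth fixing: you say all four ends fall under the first alternative of (i4) with $k-m=-1$, but at $z=0$ the expansion $x_z=\tfrac{1}{z^3}E_1-20E_1-8E_2+o(z)$ shows that the $z^{-2}$-term in $x_z$ vanishes, so in fact $k-m=+1$ there and it is the \emph{second} alternative of (i4) that applies. This does not affect the conclusion, since Theorem~\ref{prop-immersion} covers both cases, but the end at $z=0$ behaves differently from the three at the cube roots of unity and the paper checks it separately for this reason.
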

With this example, we see that Case 3 of the classification theorem~\ref{main} does occur.

\appendix

\section{Appendix}
One crucial step in our proof of the classification theorem is that: \textit{A holomorphic sub-bundle spanned by some sections is still well-defined at the possible singularities where these sections are no longer linear independent if some suitable conditions are satisfied.} This depends on the following well-known result.

\begin{lemma}\cite{Chern}\label{lem-chern}
Let $\{\psi_i(z), i=1,\cdots, m\}$ be complex-valued functions which satisfy the differential system
\[
\frac{\partial \psi_i}{\partial \zb}=\sum_j a_{ij}\psi_j,~~~1\le i,j\le m
\]
in a neighborhood of $z=0$, where $\{a_{ij}\}$ are complex-valued $C^1-$functions. Suppose the functions $\{\psi_i(z)\}$ do not vanish identically in a neighborhood of $z=0$. Then\\
\indent (1) The common zeros of $\{\psi_i, i=1,\cdots, m\}$ are isolated;\\
\indent (2) At a common zero of $\{\psi_i\}$, the ratio
$[\psi_1:\cdots:\psi_m]$ tends to a limit.
\end{lemma}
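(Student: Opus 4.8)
The plan is to reduce the elliptic system to the purely holomorphic case by means of an integrating-factor matrix, after which both conclusions become elementary facts about holomorphic vector-valued functions. Writing $\psi=(\psi_1,\dots,\psi_m)^\top$ and $A=(a_{ij})$, the system reads $\psi_{\zb}=A\psi$ near $z=0$. I would first construct a continuous, nonsingular matrix-valued function $Q(z)$ with $Q(0)=I$ solving the matrix equation $Q_{\zb}=AQ$ on a small disk. The device for this is the Cauchy--Pompeiu (solid Cauchy) transform $(Tf)(z)=-\frac1\pi\iint \frac{f(\zeta)}{\zeta-z}\,\d\xi\,\d\eta$, which satisfies $(Tf)_{\zb}=f$, improves regularity by Vekua's estimates (mapping $L^\infty$ into $C^{0,\alpha}$ and $C^{0,\alpha}$ into $C^{1,\alpha}$), and has $L^\infty$-operator norm of order $r$ on a disk of radius $r$. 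Thus $Q=I+T(AQ)$ can be solved by a Neumann series (contraction mapping), a bootstrap using $A\in C^1$ upgrades $Q$ to $C^1$, and shrinking the disk keeps $Q$ close to $I$, hence invertible.

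With $Q$ in hand, set $F:=Q^{-1}\psi$. Differentiating and using $Q_{\zb}=AQ$ gives $(Q^{-1})_{\zb}=-Q^{-1}A$, so
\[
F_{\zb}=(Q^{-1})_{\zb}\psi+Q^{-1}\psi_{\zb}=-Q^{-1}A\psi+Q^{-1}A\psi=0.
\]
Hence $F$ is a holomorphic $\mathbb{C}^m$-valued function, and $\psi=QF$ factorizes the solution into a nonsingular continuous factor times a holomorphic one. Since $\psi\not\equiv0$ and $Q$ is invertible, $F\not\equiv0$.

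Now both conclusions follow from the structure of holomorphic vector functions. Let $n$ be the minimal order of vanishing at $0$ among the holomorphic components $F_i$ (finite since $F\not\equiv0$), and write $F(z)=z^nH(z)$ with $H$ holomorphic and $H(0)\neq0$, at least one component of $H$ being nonvanishing at the origin. For the first claim, $\psi(z)=0\Leftrightarrow F(z)=0\Leftrightarrow z^nH(z)=0$; since $H(0)\neq0$, continuity gives $H(z)\neq0$ on a punctured neighborhood, so $\psi(z)\neq0$ there and the common zeros are isolated. For the second claim, for $z\neq0$ the point $[\psi_1:\cdots:\psi_m]$ equals $[\,(QH)_1:\cdots:(QH)_m\,]$ after cancelling the nonzero scalar $z^n$, and $Q(z)H(z)\to H(0)\neq0$ as $z\to0$; therefore the ratio tends to the limit $[H_1(0):\cdots:H_m(0)]\in\mathbb{C}P^{m-1}$.

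The main obstacle is the construction and regularity of the integrating factor $Q$: everything hinges on producing a nonsingular $C^1$ solution of $Q_{\zb}=AQ$ on a fixed small disk, which is exactly where the Vekua mapping properties of $T$ and the contraction argument are required. Once the similarity factorization $\psi=QF$ with $F$ holomorphic is available, the remaining steps are routine. In the geometric applications of this lemma the sections $\psi_i$ are real-analytic away from their zeros, so no issue of weak regularity arises and $F=Q^{-1}\psi$ is genuinely $C^1$, whence $F_{\zb}=0$ forces holomorphicity.
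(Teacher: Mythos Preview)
The paper does not supply its own proof of this lemma; it merely quotes the result from Chern's 1970 paper and then applies it to establish Lemma~\ref{lemma-subbundle}. So there is nothing in the paper to compare your argument against, and the relevant question is simply whether your proof is correct and whether it matches the classical one.

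Your argument is correct and is essentially the standard \emph{similarity principle} from Bers--Vekua theory, which is also the mechanism behind Chern's original proof. The key step---producing a nonsingular $C^1$ matrix $Q$ with $Q_{\zb}=AQ$ on a small disk via the solid Cauchy transform and a contraction/bootstrap---is exactly the right device, and once you have the factorization $\psi=QF$ with $F$ holomorphic, both conclusions are immediate from the local structure $F(z)=z^nH(z)$, $H(0)\neq 0$, of a nonzero holomorphic vector function. One small comment on your closing remark: in the applications in this paper the sections $\psi_i$ are real-analytic (hence $C^1$) \emph{everywhere}, including at their zeros, so there is no regularity issue at all; your phrase ``real-analytic away from their zeros'' undersells the situation. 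Otherwise the write-up is sound.
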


By Chern's lemma we can prove the following result used in the proof of the main theorem.

\begin{lemma}\label{lemma-subbundle}
Given a holomorphic vector bundle $V$ of rank-$m$ over a connected Riemann surface $M$ with
a $\bar\partial$-operator. Let $\psi$ be a section such that $\bar\partial^2 \psi=\lambda\psi$ everywhere for a $C^1$ function $\lambda$.
We also suppose that $\psi$ is non-zero on an open dense subset of $M$. Then the holomorphic sub-bundle $U=Span_{\C}\{\psi,\bar\partial \psi\}$ is well defined on the whole Riemann surface.
\end{lemma}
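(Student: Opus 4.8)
The plan is to reduce the global statement to a local one near each possible singular point and to invoke Chern's lemma (Lemma~\ref{lem-chern}); the decisive trick is to convert the second-order hypothesis $\bar\partial^2\psi=\lambda\psi$ into a \emph{first-order} linear system with merely $C^1$ coefficients. First I would fix a local holomorphic frame near a point $p$, in which $\bar\partial$ acts componentwise as $\partial_{\zb}$; writing $\psi=(\psi_1,\dots,\psi_m)$ and $\phi:=\bar\partial\psi$, the hypothesis becomes $\partial_{\zb}\psi_i=\phi_i,\ \partial_{\zb}\phi_i=\lambda\psi_i$ in the $2m$ unknowns $(\psi_i,\phi_i)$, whose coefficient matrix is $C^1$ precisely because $\lambda$ is. This is the crucial observation: although $\bar\partial\psi$ is morally a (possibly singular) multiple of $\psi$, the \emph{pair} $(\psi,\phi)$ satisfies a genuinely $C^1$ system, which is exactly the setting of Lemma~\ref{lem-chern}. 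In parallel I would record that $\Psi:=\psi\wedge\bar\partial\psi\in\Gamma(\Lambda^2 V)$ is holomorphic, since $\bar\partial\Psi=(\bar\partial\psi)\wedge\phi+\psi\wedge\bar\partial\phi=\phi\wedge\phi+\lambda\,\psi\wedge\psi=0$. The proof then splits according to whether $\Psi\equiv0$.

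If $\Psi\not\equiv0$, then $U$ has generic rank $2$ and its only singularities are the isolated zeros of the holomorphic section $\Psi$. Near such a zero I would factor out the common vanishing order, $\Psi=z^{k}\tilde\Psi$ with $\tilde\Psi$ holomorphic and $\tilde\Psi(p)\neq0$; because decomposability is a closed (Pl\"ucker) condition preserved under scaling, $\tilde\Psi$ remains decomposable, and the $2$-plane it determines is the desired holomorphic extension of $U$ across $p$.

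The case $\Psi\equiv0$ is the delicate one and I expect it to be the main obstacle. Here $U=\mathrm{Span}\{\psi\}$ is generically a line bundle, but the scalar $a$ with $\bar\partial\psi=a\psi$ typically blows up at the zeros of $\psi$ (this is exactly the situation in the applications, where $a$ is a multiple of the singular factor $\bar\mu$), so Chern's lemma cannot be applied to $\psi$ alone. Instead I would apply Lemma~\ref{lem-chern} to the full $2m$-tuple $(\psi,\phi)$: its common zeros are isolated and the direction $[\psi:\phi]$ tends to a limit there. The identity $\psi\wedge\phi\equiv0$ forces this limiting direction to be decomposable, hence to determine a single limiting line $\ell_0\subset V_p$, and one checks that $\mathrm{Span}\{\psi(z)\}\to\ell_0$, treating separately the points where $\psi(p)=0$ but $\phi(p)\neq0$, at which $\mathrm{Span}\{\psi\}=\mathrm{Span}\{\phi\}$ already extends by continuity.

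In both cases the resulting classifying map into the Grassmann bundle $Gr(r,V)$ is continuous and holomorphic off a discrete set, hence extends holomorphically by the removable-singularity theorem (applied to its Pl\"ucker coordinates), so $U$ is a globally defined holomorphic sub-bundle on all of $M$. The entire force of the hypothesis $\bar\partial^2\psi=\lambda\psi$ is thus to trade the singular relation $\bar\partial\psi=a\psi$ for the smooth first-order system on $(\psi,\bar\partial\psi)$, after which one descends from the limiting line in $V\oplus V$ back to a line in $V$ using decomposability.
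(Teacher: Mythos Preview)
Your proposal is correct and follows essentially the same route as the paper's proof: both reduce to a local holomorphic frame, rewrite the hypothesis as the first-order $C^1$ system $\partial_{\zb}\psi_i=\phi_i,\ \partial_{\zb}\phi_i=\lambda\psi_i$ in order to apply Chern's lemma, observe that $\psi\wedge\bar\partial\psi$ is holomorphic, and then split into the rank-$1$ and rank-$2$ cases, handling the former via the limiting ratio in $\mathbb{C}P^{2m-1}$ and the latter via Pl\"ucker coordinates after factoring out $z^k$. The only cosmetic difference is that you front-load the first-order system before the case split, whereas the paper introduces it inside the rank-$1$ case.
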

\begin{proof}
By the assumption, $\psi\wedge \bar\partial\psi$ is a holomorphic section of the associated bundle $\wedge^2 V$. Therefore, either
$\bar\partial\psi, \psi$ are always linearly dependent, or they depend on each other only at several isolated points of $M$.

In the first case, $U=Span_{\C}\{\psi,\bar\partial \psi\}$ is defined with rank-$1$ on the subset where either $\psi$ or $\bar\partial \psi$ is non-zero. The possible exceptional points are the common zeros of $\{\psi,\bar\partial \psi\}$.

In the second case, $U$ is defined with rank-$2$ almost everywhere on $M$. The possible exceptions are those isolated zeros of $\psi\wedge \bar\partial\psi$ (with finite order).

We will show that any possible singularity $p$ is isolated in either case, and $U$ extends continuously to be defined at $p$.

For this purpose, notice that we always take a small neighborhood $\Omega_p$ of $p$ such that $V$ has a local trivialization $\Omega_p\times \C^m$ (which is a holomorphic equivalence).
With respect to this trivialization above, denote
\[
\psi=(\psi_1,\cdots,\psi_m),~~\bar\partial\psi=
(\psi_{m+1},\cdots,\psi_{2m}).
\]
By assumption we have
\[
\frac{\partial \psi_i}{\partial \zb}=\psi_{m+i},
~~\frac{\partial \psi_{m+i}}{\partial \zb}=\lambda \psi_i,~~i=1,\cdots,m.
\]

In the first case, by Lemma~\ref{lem-chern}, the common zeros of
$\psi,\bar\partial\psi$ are isolated, which are the only possible exceptional points. Take such a point $p$ as a common zero of
$\{\psi_1(z),\cdots,\psi_{2m}(z)\}$. By the second conclusion of Lemma~\ref{lem-chern}, the ratio $[\psi_1:\cdots:\psi_m:\psi_{m+1}:\psi_{2m}]$ has a well-defined limit
$[c_1:\cdots:c_m:c_{m+1}:c_{2m}]\in \C P^{2m-1}$. Either $(c_1,\cdots,c_m)$ or $(c_{m+1},\cdots,c_{2m})$ is a non-zero vector in $\C^m$. If both of them are non-zero, then as the limit of parallel vectors, they are also parallel. This provides the desired extension of the line sub-bundle to $p$.

In the second case, let $p$ be an isolated zero (of order $k>0$) of $\psi\wedge \bar\partial\psi$ and suppose the local coordinate is taken so that $z(p)=0$. With respect to a basis $\{v_1,\cdots,v_m\}$ of $\C^m$, we may write
\[
\psi\wedge \bar\partial\psi= z^k\cdot \sum c_{ij} v_i\wedge v_j,~~~(1<i<j<m)
\]
in a neighborhood of $z=0$, where $\{c_{ij}\}$ are holomorphic functions, and at least one of them is non-zero at $z=0$.
These coefficients are exactly the classical Pl\"ucker coordinates
\[
[z^k c_{12}:\cdots:z^k c_{ij}:\cdots]=[c_{12}:\cdots: c_{ij}:\cdots]
\]
of the corresponding subspace $U$ in $\wedge^2\C^m$.
This also gives the Pl\"ucker embedding of the Grassmannian
$Gr(2,\C^m)$ into $\C P^{m(m-1)/2-1}$ as a closed submanifold.
Thus it is obvious that $\psi\wedge \bar\partial\psi$ can be extended continuously to the isolated singularity $p$.
\end{proof}

{\small
\def\refname{Reference}

}

\end{document}